\newcommand{\ci}[1]{_{{}_{\!\scriptstyle{#1}}}}
\newcommand{\Be}{\begin{equation}}
\newcommand{\Ee}{\end{equation}}
\newcommand{\Bea}{\begin{eqnarray}}
\newcommand{\Eea}{\end{eqnarray}}
\newcommand{\Beas}{\begin{eqnarray*}}
\newcommand{\Eeas}{\end{eqnarray*}}
\newcommand{\Benu}{\begin{enumerate}}
\newcommand{\Eenu}{\end{enumerate}}
\newcommand{\Bi}{\begin{itemize}}
\newcommand{\Ei}{\end{itemize}}
\def\dyad{{\text{dyad}}}
\def\intslash{\rlap{\kern  .32em $\mspace {.5mu}\backslash$ }\int}
\def\qsl{{\rlap{\kern  .32em $\mspace {.5mu}\backslash$ }\int_{Q_x}}}
\def\emph#1{{\it #1 }}
\def\HF{{\text{\rm HF}}}
\def\ga{\gamma}
\def\Ups{\Upsilon}
\def\cf{{\it cf}}
\def\supp{{\text{\rm supp}}}
\def\inn#1#2{\langle#1,#2\rangle}
\def\meas{{\text{\rm meas}}}
\def\lc{\lesssim}
\def\gc{\gtrsim}
\def\eps{\varepsilon}
\def\ka{\kappa}
             \def\La{\Lambda}
\def\fA{{\mathfrak {A}}}
\def\fK{{\mathfrak {K}}}
\def\fL{{\mathfrak {L}}}
\def\fM{{\mathfrak {M}}}
\def\fS{{\mathfrak {S}}}
\def\bbC{{\mathbb {C}}}
\def\bbN{{\mathbb {N}}}
\def\bbR{{\mathbb {R}}}
\def\bbZ{{\mathbb {Z}}}
\def\sH{{\mathscr {H}}}
\def\cD{{\mathcal {D}}}
\def\cE{{\mathcal {E}}}
\def\cG{{\mathcal {G}}}
\def\cM{{\mathcal {M}}}
\def\cP{{\mathcal {P}}}
\def\cS{{\mathcal {S}}}
\def\cT{{\mathcal {T}}}
\def\cZ{{\mathcal {Z}}}
\def\be#1{\begin{equation}\label{ #1}}
\def\endeq{\end{equation}}
\def\endal{\end{align}}
\def\bas{\begin{align*}}
\def\eas{\end{align*}}
\def\bi{\begin{itemize}}
\def\ei{\end{itemize}}
\def\eps{\varepsilon}
\def\emph#1{{\it #1}}
\def\textbf#1{{\bf #1}}
\def\bbone{{\mathbbm 1}}
\theoremstyle{plain}
  \newtheorem{theorem}{Theorem}[section]
   \newtheorem{proposition}[theorem]{Proposition}
   \newtheorem{lemma}[theorem]{Lemma}
   \newtheorem{corollary}[theorem]{Corollary}
\theoremstyle{remark}
\theoremstyle{definition}
\begin{document}

\title[Haar projection numbers
and unconditional convergence]{Haar projection numbers
and failure of unconditional convergence in Sobolev spaces}

\author{Andreas Seeger \ \ \ \ \ \ \ \ \ Tino Ullrich}

\address{Andreas Seeger \\ Department of Mathematics \\ University of Wisconsin \\480 Lincoln Drive\\ Madison, WI, 53706, USA} \email{seeger@math.wisc.edu}

\address{Tino Ullrich\\
Hausdorff Center for Mathematics\\ Endenicher Allee 62\\
53115 Bonn, Germany} \email{tino.ullrich@hcm.uni-bonn.de}
\begin{abstract} For $1<p<\infty$ we determine the precise range
of $L_p$ Sobolev spaces for which the Haar system is an unconditional basis.
We also consider the natural  extensions to Triebel-Lizorkin spaces and prove upper and lower
bounds for norms of projection operators depending
on properties of the  Haar frequency set.
\end{abstract}
\subjclass[2010]{46E35, 46B15, 42C40}
\keywords{Unconditional basis, Haar system,  Sobolev spaces, Triebel-Lizorkin spaces, Besov spaces}

\date\today

\thanks{Research supported in part
%by the National Science Foundation}
by National Science Foundation grants DMS 1200261 and DMS 1500162.
A.S.  thanks the Hausdorff Research Institute for Mathematics in Bonn for
support. The paper was initiated in the summer of 2014 when he participated in
the Hausdorff Trimester Program in Harmonic Analysis and Partial Differential
Equations. Both authors would like to thank Winfried Sickel and Hans Triebel
for several valuable remarks.}
\maketitle

%\address{}

%\begin{thanks}\end{thanks}
%\begin{abstract}
%\end{abstract}
%\centerline{\bf Preliminary version, March 4, 2015}

\section{Introduction}
\noindent We consider the Haar system on the real line given by
\Be\label{HaarS}
  \sH = \{h_{j,\mu}~:~\mu\in \bbZ, j=-1,0,1,2,...\}\,,
\Ee
where for $j \in \bbN\cup \{0\}$, $\mu\in \bbZ$, the function $h_{j,\mu}$ is
defined by $$h_{j,\mu}(x)=\bbone_{I^+_{j,\mu}}(x)-\bbone_{I^-_{j,\mu}}(x)\,,$$
and $h_{-1,\mu}$ is the characteristic function of the interval
$[\mu,\mu+1)$. The intervals
$I^{+}_{j,\mu}=[2^{-j}\mu, 2^{-j}(\mu+1/2))$ and
$I^{-}_{j,\mu}=[2^{-j}(\mu+1/2), 2^{-j}(\mu+1))$ represent the dyadic children
of the usual dyadic interval $I_{j,\mu}=[2^{-j}\mu, 2^{-j}(\mu+1))$.

It has been shown by Marcinkiewicz  \cite{marcinkiewicz}
(based on Paley's square function result  \cite{paley} for the Walsh system) that the Haar system, in contrast to the trigonometric
system, represents an unconditional basis in all $L_p([0,1])$ if $1<p<\infty$.
In this paper we consider this problem in Banach spaces measuring
smoothness. Triebel \cite{triebel73, triebel78,
triebel-bases} showed that the Haar system represents an unconditional basis in
Besov spaces $B^s_{p,q}$ if $1<p,q<\infty$ and $-1/p' < s < 1/p$. In
addition, he obtained extensions to quasi-Banach spaces. See also Ropela
\cite{ropela}, Sickel \cite{sickel}, and Bourdaud \cite{bourdaud} for related
results. Note, that the
endpoint case $s=1/p$ (and by duality the case $s=-1/p'$) can be excluded by
noting that all Haar functions belong to $B^{1/p}_{p,q}$ if and only if
$q=\infty$. Concerning Sobolev and Triebel-Lizorkin spaces the picture is far
more interesting. Triebel \cite{triebel-bases} proved that the Haar system is an
unconditional basis in Sobolev
(or Bessel potential) spaces $L_p^s$, $1<p<\infty$,  if $\max\{-1/p',1/2\}<s<\min\{1/p,1/2\}$. Here the norm in $L_p^s$ is given by  $\|f\|_{L_p^s}=
\|\cD^s_B f\|_p$ where  $\widehat {\cD_B^sf}(\xi)=(1+|\xi|^2)^{s/2}\widehat
f(\xi)$.

It has been an open question (formulated explicitly by Triebel in \cite[p.95]{triebel-bases} and
in \cite{triebelproblem})
whether the Haar system  is an
unconditional Schauder basis on   $L_p^s$ for
 the ranges $1<p<2$, $1/2\le s\le 1/p$ and
 $2<p<\infty$, $-1/p'\le s\le - 1/2$.
We answer this question negatively.

It is natural to formulate the results
in  the class of  Triebel-Lizorkin spaces $F^s_{p,q}$ which include the
$L_p$-Sobolev spaces $L^s_p$; recall that by Littlewood-Paley theory $L_p^s=
F^s_{p,2}$
for $1<p<\infty$ and $s\in \bbR$.  We emphasize that the results are already
new  for the special case of $L_p^s$-spaces.
\begin{theorem} \label{Fthm} For  $1<p,q<\infty$, the Haar system is an
unconditional basis in $F^{s}_{p,q}$ if and only if
$$\max\{-1/p',-1/q'\} <s< \min\{1/p,   1/q\}\,.$$
\end{theorem}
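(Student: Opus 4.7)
The assertion splits into sufficiency (the Haar system is unconditional in the stated range) and necessity (it fails outside). The plan is to reformulate unconditionality as uniform boundedness, over all index sets $E$, of the Haar partial-sum projections
\[
\cP_E f \;=\; \sum_{(j,\mu)\in E} \frac{\langle f, h_{j,\mu}\rangle}{\|h_{j,\mu}\|_2^2}\, h_{j,\mu}
\]
on $F^{s}_{p,q}$, and to exploit the duality $(F^{s}_{p,q})^{*}=F^{-s}_{p',q'}$ to restrict throughout to $s\ge 0$, so that only the upper bound $s<\min\{1/p,1/q\}$ needs to be characterized.

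For sufficiency the plan is to prove a Haar coefficient characterization of the form
\[
\|f\|_{F^{s}_{p,q}} \;\asymp\; \Big\|\Big(\sum_{j\ge 0,\mu} \big(2^{j(s+1/2)}|\langle f,h_{j,\mu}\rangle|\,\bbone_{I_{j,\mu}}\big)^{q}\Big)^{1/q}\Big\|_{L^{p}}
\]
plus a low-frequency term. Since the right-hand side depends only on the absolute values $|\langle f, h_{j,\mu}\rangle|$, this equivalence is sign-invariant and immediately gives uniform boundedness of all $\cP_E$, hence unconditionality. To prove the equivalence we realize each Haar function as a rescaled atom (or smooth molecule) for $F^{s}_{p,q}$, which requires $s<1/p$; the passage from atoms to the full norm uses the vector-valued Fefferman--Stein maximal inequality applied to the Peetre maximal functions underpinning $F^{s}_{p,q}$, and this application is precisely where the requirement $s<1/q$ enters.

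For necessity we must exhibit failure when $s\ge\min\{1/p,1/q\}$. The case $s\ge 1/p$ is immediate since no Haar function lies in $F^{1/p}_{p,q}$ for $q<\infty$, so the system is not even a Schauder basis. The delicate regime is $1/q\le s<1/p$, which forces $q<p$. Here the plan is to construct, for every $C$, a function $f$ and an index set $E$ with $\|\cP_E f\|_{F^{s}_{p,q}}\ge C\|f\|_{F^{s}_{p,q}}$. The construction uses many dyadic scales $j=1,\ldots,N$ and places Haar functions of amplitude $\sim 2^{-j(s+1/2)}$ along a carefully chosen geometric pattern (supports nested or offset so as to maximise interaction in the $\ell^{q}$-aggregation). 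The full sum has $F^{s}_{p,q}$-norm of order $N^{1/q}$ from the $\ell^{q}$-summation, whereas a projection $\cP_E$ corresponding to a coherent subset of indices produces a strictly larger norm. The ratio $\|\cP_E\|_{F^{s}_{p,q}\to F^{s}_{p,q}}$, the \emph{Haar projection number} of the title, is the central object and will be analysed in terms of combinatorial/geometric invariants of $E$.

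The hard part will be the endpoint $s=1/q$. Here the expected blow-up is only logarithmic in $N$, so the counterexample together with matching upper and lower bounds on $\|\cP_E\|$ must be tuned sharply: the upper bound via interpolation and the atomic picture, the lower bound via explicit test functions whose signs are matched to $E$. The opposite endpoint $s=-1/q'$ will then follow from the duality above, so essentially all technical effort concentrates on the single endpoint $s=1/q$, and it is the combinatorial design of $E$ and the amplitudes which realises the sharp growth that I expect to be the main obstacle.
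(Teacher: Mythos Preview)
Your overall architecture is sound and matches the paper's: sufficiency is already in the literature (Triebel), necessity for $s\ge 1/p$ is immediate since the Haar functions do not belong to the space, and the substantive work is the intermediate strip. One small slip: the range $1/q\le s<1/p$ forces $p<q$, not $q<p$. The paper in fact works on the dual side, establishing the lower bounds for $-1/p'<s\le -1/q'$ with $q<p$ and then invoking $(F^{s}_{p,q})^{*}=F^{-s}_{p',q'}$; your choice to stay on the positive side is equivalent but means you will be estimating test functions in spaces with $p<q$, which is slightly less convenient for the $BMO$/sharp-function machinery the paper uses in Section~4.

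The genuine gap is in the necessity argument. Your description stops exactly where the work begins: you assert that for a ``carefully chosen geometric pattern'' and a ``coherent subset'' $E$ the projection $\cP_E f$ has strictly larger $F^{s}_{p,q}$-norm than $f$, but you give no mechanism for this. That mechanism is the entire content of the theorem. The paper's device is probabilistic: the test functions carry Rademacher signs $r_l(t_2)$, the projection is replaced by a random sign multiplier $T_{t_1}=\sum_j r_j(t_1)\sum_\mu 2^j\langle\,\cdot\,,h_{j,\mu}\rangle h_{j,\mu}$, and one bounds $\iint\|T_{t_1}f_{t_2}\|^q\,dt_1\,dt_2$ from below via Khinchine's inequality. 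Khinchine decouples the $j$ and $l$ sums and reduces everything to a diagonal term that can be computed explicitly. Only afterwards does one extract a specific $(t_1,t_2)$ and write $T_{t_1}=P_{E^+}-P_{E^-}$ to obtain an honest Haar projection with large norm. Without randomization (or an explicit combinatorial substitute, which the authors defer to a separate paper), your plan has no way to certify the lower bound.

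Two further points. First, the paper's test functions are built from \emph{smooth} bumps $\eta(2^l(\cdot-x_{l,\nu}))$ with many vanishing moments, not from Haar functions themselves; this is what permits sharp $F^{s}_{p,q}$ upper bounds for $f$ via the Frazier--Jawerth atomic theory and a $BMO_{\mathrm{dyad}}$ argument (Proposition~4.1 in the paper). If you use Haar functions as test inputs you will have difficulty controlling $\|f\|_{F^{s}_{p,q}}$ sharply precisely in the regime where unconditionality fails. Second, at the endpoint $s=1/q$ the non-endpoint construction gives only $\gamma_*\approx(\log\Lambda)^{1/q}$, whereas $\gamma^*\approx\log\Lambda$; the paper needs a genuinely different test function with an additional internal scale parameter (their Section~6) to reach the sharp upper growth, so ``tuning'' the open-range example will not suffice.
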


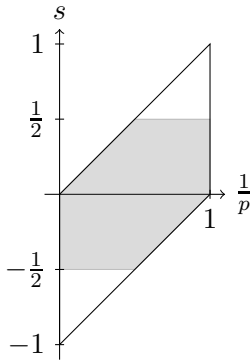
\begin{figure}
 \begin{center}
\begin{tikzpicture}[scale=2]
% Axes
\draw[->] (-0.1,0.0) -- (1.1,0.0) node[right] {$\frac{1}{p}$};
\draw[->] (0.0,-1.1) -- (0.0,1.1) node[above] {$s$};

% Ticks
\draw (1.0,0.03) -- (1.0,-0.03) node [below] {$1$};
\draw (0.03,1.0) -- (-0.03,1.00) node [left] {$1$};
\draw (0.03,.5) -- (-0.03,.5) node [left] {$\frac{1}{2}$};
\draw (0.03,-.5) -- (-0.03,-.5) node [left] {$-\frac{1}{2}$};
\draw (0.03,-1.0) -- (-0.03,-1.00) node [left] {$-1$};

% Plot
\draw[fill=black!70, opacity=0.2] (0.0,-.5) -- (0.0,0.0) -- (.5,.5) -- (1.0,0.5) -- (1.0,0.0) -- (.5,-.5) -- (0.0,-.5);
\draw (0.0,-1.0) -- (0.0,0.0) -- (1.0,1.0) -- (1.0,0.0) -- (0.0,-1.0);
\draw (0.85,0.65) node {};
\draw (0.15,-0.65) node {};
\end{tikzpicture}
\caption{Domain for an unconditional basis in spaces $L^s_p$}\label{fig1}
\end{center}
\end{figure}
Thus  the result about the  Haar system in Sobolev and
 Triebel-Lizorkin spaces depends in a  significant way on the secondary
integrability parameter $q$ while for the Besov spaces $q$ plays no role.
The ``if'' part of Theorem \ref{Fthm} was known and can be found in
\cite{triebel-bases}.
%See \cite{triebel-bases},  and\S\ref{besovappendix} for the discussion of some
The figure above illustrates the differences of the results in  Besov and
Sobolev spaces.

An application of Theorem \ref{Fthm} concerns dyadic characterizations of $F^s_{p,q}$.
For $j\ge 1$ let $\bbone_{j,\mu}$ be the characteristic function of the support
of $h_{j,\mu}$.
%$[2^{-j}\mu, 2^{-j}(\mu+1)$ and let $\chi_
One defines the sequence space $f^{s}_{p,q}$ as the space of all doubly-indexed
sequences $\{\lambda_{j,\mu}\}_{j,\mu} \subset \bbC$ for which
\begin{equation}\label{ss}
\|f\|_{f^{s}_{p,q}}=
\Big\|\Big(\sum\limits_{j=-1}^\infty2^{jsq}\Big|\sum\limits_{\mu\in \bbZ}
\lambda_{j,\mu}\bbone_{j,\mu}\Big|^q\Big)^{1/q}\Big\|_p
\end{equation}
is finite. For $f \in \cS(\bbR)$ consider the dyadic version
of the $F^{s}_{p,q}$-norm given by
$$
    \|f\|_{F^{s,{\rm dyad}}_{p,q}} = \|\{2^j\langle f,h_{j,\mu}\rangle
\}_{j,\mu}\|_{f^s_{p,q}}
$$
and let $F^{s,{\rm dyad}}_{p,q}$ be the completion of $\cS(\bbR)$ under this
norm.

Triebel \cite{triebel-bases} showed that $\max\{-1/p',-1/q'\} < s <
\min\{1/p,1/q\}$ is sufficient for $F^{s,{\rm dyad}}_{p,q} = F^s_{p,q}$ with
equivalence of norms. He also showed that this
equivalence implies that $\sH$ is an unconditional basis in
$F^s_{p,q}$. Hence, Theorem \ref{Fthm} yields the necessity of Triebel's
result:
\begin{corollary}\label{waveiso} For  $1<p,q<\infty$ we have  $F^{s,{\rm
dyad}}_{p,q} = F^s_{p,q}$ if and only if
$$\max\{-1/p',-1/q'\} <s< \min\{1/p,   1/q\}\,.$$
\end{corollary}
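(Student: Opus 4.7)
The plan is to treat this as a consequence of Theorem \ref{Fthm}. The ``if'' direction is Triebel's result, already recorded in the text before the statement; only the ``only if'' requires argument. I would reduce the identification $F^{s,\mathrm{dyad}}_{p,q} = F^s_{p,q}$ to the unconditional basis property of $\sH$ on $F^s_{p,q}$, and then quote the necessity part of Theorem \ref{Fthm} to pin down the parameter range.

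The crucial observation is that the dyadic norm is sign-invariant in the Haar coefficients. Indeed, for each fixed level $j$ the indicator functions $\bbone_{j,\mu}$ have pairwise disjoint supports, so
\[
\Big|\sum_{\mu\in\bbZ} \lambda_{j,\mu}\,\bbone_{j,\mu}\Big|=\sum_{\mu\in\bbZ}|\lambda_{j,\mu}|\,\bbone_{j,\mu}.
\]
Consequently $\|\cdot\|_{f^s_{p,q}}$ (and hence $\|\cdot\|_{F^{s,\mathrm{dyad}}_{p,q}}$) depends on the Haar coefficients only through their moduli and is monotone in those moduli. Assuming the norm equivalence $\|f\|_{F^s_{p,q}}\asymp \|f\|_{F^{s,\mathrm{dyad}}_{p,q}}$ on the dense subspace $\cS(\bbR)$, I would use this together with biorthogonality of the Haar system to show that, for any finite index set $F$ and any choice of signs $\epsilon_{j,\mu}\in\{\pm 1\}$, the partial ``sign projection''
\[
S^\epsilon_F g:=\sum_{(j,\mu)\in F}\epsilon_{j,\mu}\,2^j\langle g,h_{j,\mu}\rangle\, h_{j,\mu}
\]
obeys $\|S^\epsilon_F g\|_{F^s_{p,q}}\le C\|g\|_{F^s_{p,q}}$ with $C$ independent of $F$ and $\epsilon$. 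The verification is direct: biorthogonality yields $2^{j'}\langle S^\epsilon_F g,h_{j',\mu'}\rangle=\bbone_{(j',\mu')\in F}\,\epsilon_{j',\mu'}\,2^{j'}\langle g,h_{j',\mu'}\rangle$, so applying the norm equivalence to $S^\epsilon_F g$ and then using the sign-invariance and monotonicity of $\|\cdot\|_{f^s_{p,q}}$ reduces the bound to the equivalence applied to $g$ itself.

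Combined with $L_p$-convergence of the Haar expansion of Schwartz functions (Marcinkiewicz) and the density of $\cS(\bbR)$ in $F^s_{p,q}$, this uniform sign-bound is exactly the statement that $\sH$ is an unconditional basis of $F^s_{p,q}$. The necessity part of Theorem \ref{Fthm} then forces $\max\{-1/p',-1/q'\}<s<\min\{1/p,1/q\}$, which is what the corollary claims. I do not foresee a real obstacle here: the argument is pure bookkeeping once Theorem \ref{Fthm} is in hand, and the only substantive ingredient is the disjoint-support identity that renders the dyadic norm sign-invariant.
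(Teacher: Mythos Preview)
Your proposal is correct and follows essentially the same route as the paper: reduce the ``only if'' direction to the unconditional basis property and invoke Theorem~\ref{Fthm}. The paper simply cites Triebel \cite{triebel-bases} for the implication ``$F^{s,\mathrm{dyad}}_{p,q}=F^s_{p,q}$ forces $\sH$ to be an unconditional basis of $F^s_{p,q}$'', whereas you spell out a self-contained argument for it via the sign-invariance of the $f^s_{p,q}$-norm; this is a reasonable and slightly more informative way to present the same deduction. One minor point to tighten: you apply the norm equivalence to $S^\epsilon_F g$, which is a finite Haar combination rather than a Schwartz function, so you should remark that the equivalence on $\cS(\bbR)$ extends by density to the common completion (or, alternatively, approximate $S^\epsilon_F g$ in $F^s_{p,q}$ by Schwartz functions and pass to the limit).
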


\subsection*{\it Quantitative results}
We now formulate quantitative versions of  Theorem \ref{Fthm}.
For $j\ge 0$ define  the {\it Haar frequency} of $h_{j,\mu}$ to be  $2^j$. For
any subset $E$ of the Haar system let  $\text{HF}(E)$ be the {\it Haar frequency
set} of $E$, i.e.  $\text{HF}(E)$ consists of all $2^k$ with $k\ge 0$ for which
there exists  $\mu\in \bbZ$ with $h_{k,\mu}\in E$.
Let $P_E$ be the orthogonal projection to the subspace spanned by $\{h:h\in
E\}$, which is closed in $L_2(\bbR)$. For Schwartz functions $f$ we define
$$P_E f= \sum_{h_{j,\mu}\in E} 2^j\inn {f}{h_{j,\mu}}h_{j,\mu}.$$
For function spaces $X$ such as $X=F_{p,q}^s$ (or $X=B_{p,q}^s$)
we define   growth functions  depending on the cardinality
of the Haar frequency set of $E$.
First, for any $A\subset \{2^n: n=0,1,\dots\}$, set
\Be\label{growthA}
\cG(X,A)= \sup \big\{\|P_E\|_{X\to X}: \text{HF}(E)\subset A\big\}.
\Ee
Define, for $\La\in \bbN$,  the upper and lower {\it Haar projection numbers}
\Be\label{gastar}
\begin{aligned}
\ga^*(X;\La) &= \sup \big\{\cG(X,A): \# A\le \Lambda\}\,,
\\
\ga_*(X;\La) &= \inf  \big\{\cG(X,A): \# A\ge \La\}\,.
\end{aligned}
\Ee
%Observe \Be\label{comparisonofga}\ga_*(X;\la)\le \cG(X;\la)\le \ga^*(X;\la)\,.\Ee To see this consider $A_\la=\{2^n: 1\le 2^n\le \la\}$ and
%note that $\ga(X,A_\la)= \cG(X;\la)$. Also
%note that $\la<2^{\card(A_\la)}\le 2\la$. Thus clearly $\ga_*(X;\la)\le \ga(X,A_\la)\le \ga^*(X;\la)$ and \eqref{comparisonofga} follows.

Clearly, $\gamma_*(X;\La)\le \gamma^*(X;\La)$. If the Haar basis is an unconditional basis of $X$ then
$\gamma^*(X;\La)= O(1)$. By the known results we have
$\gamma^*(F^s_{p,q};\La)=O(1)$ for the cases
$\max\{-1/p', -1/q'\} <s< \min\{1/p, 1/q\}\,.$
Note that for
$s\ge 1/p$  the Haar functions do not belong to $F^s_{p,q}$, and thus
$\gamma_*(F^s_{p,q};\La)=\infty$. By duality,
$\gamma_*(F^s_{p,q};\La)=\infty$ for $s\le -1+1/p$.
Unlike for the scale of Besov spaces, there are intermediate ranges where
the Haar system is not an unconditional basis of $F^s_{p,q}$  but the Haar
projection numbers are finite, however not uniformly bounded.

In what follows we always assume $\La>10$. We shall use the notation
$A\lc B$, or $B\gc A$,  if $A\le CB$ for a positive constant depending only on
 $p,q,s$. We also use $A\approx B$ if both $A\lc B$ and $B\lc A$.

\begin{theorem} \label{sharpGabd}
%The following statements hold for $\la>10$, with implicit constants independent of $\la$.
(i) For $1<p<q< \infty$, $1/q< s< 1/p$,
$$\gamma_*(F^s_{p,q};\La) \approx \gamma^*(F^s_{p,q};\La)
\approx
\La^{s-\frac 1q}\,.
$$

(ii)  For $1<q<p<\infty$, $-1/p' < s<-1/q'$,
$$
\gamma_*(F^s_{p,q};\La) \approx \gamma^*(F^s_{p,q};\La)
\approx \La^{-\frac1{q'}-s}\,.
$$
\end{theorem}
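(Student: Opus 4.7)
The plan is to prove part (i) directly and reduce part (ii) to it by duality. Since $(F^s_{p,q})^*=F^{-s}_{p',q'}$ under the $L^2$ pairing for $1<p,q<\infty$ and since $P_E$ is self-adjoint in $L^2$, we have $\|P_E\|_{F^s_{p,q}\to F^s_{p,q}}=\|P_E\|_{F^{-s}_{p',q'}\to F^{-s}_{p',q'}}$. The substitution $(p,q,s)\mapsto(p',q',-s)$ carries the parameter range of (i) bijectively onto that of (ii) and the exponent $s-1/q$ onto $-s-1/q'$, so (ii) reduces to (i). Assume throughout $1<p<q<\infty$ and $1/q<s<1/p$.

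For the upper bound $\gamma^*(F^s_{p,q};\La)\lc\La^{s-1/q}$, I would decompose $P_E=\sum_{k\in K}P_{E_k}$, where $K$ is the set of $k\ge 0$ with $2^k\in\HF(E)$ (so $|K|\le\La$) and $E_k=E\cap\{h_{k,\mu}:\mu\in\bbZ\}$. Each $P_{E_k}f$ is a step function constant on dyadic children at scale $2^{-(k+1)}$, pointwise dominated by the dyadic maximal function of $f$ and with Littlewood-Paley content essentially concentrated near frequency $2^k$. The plan is to combine level-wise estimates with an aggregation over the $\La$ relevant scales that exploits the $L^p(\ell^q)$-structure of $F^s_{p,q}$: the Fefferman-Stein vector-valued maximal inequality, applied to the family $(|P_{E_k}f|)_{k\in K}$, together with a H\"older-in-scale conversion from a weighted $\ell^q$ bound of the form
\[
\Bigl\|\Bigl(\sum_{k\in K}2^{ksq}|P_{E_k}f|^q\Bigr)^{1/q}\Bigr\|_{L^p}\lc\|f\|_{F^s_{p,q}}
\]
to an estimate on $\|P_Ef\|_{F^s_{p,q}}$, produces the target polynomial loss of order $\La^{s-1/q}$.

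For the lower bound $\gamma_*(F^s_{p,q};\La)\gc\La^{s-1/q}$, given any $A$ with $\#A\ge\La$, I would select $\La$ frequencies $2^{k_1}<\dots<2^{k_\La}$ from $A$; by scale invariance the operator norm is insensitive to simultaneous dilation and, after a preparatory reduction, one may treat the model case of consecutive exponents $k_i=i-1$. The construction then chooses $E$ as a specific collection of Haar functions at these levels, for instance $h_{i-1,\mu_i}$ with nested supports, and a test function $f=g+\phi$ with $g=\sum_i\eps_ih_{i-1,\mu_i}\in\mathrm{range}(P_E)$ for an optimal sign choice and $\phi\in\ker P_E$ a finer-scale Haar correction that nearly cancels $g$ in $F^s_{p,q}$. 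The goal is to show that $g$'s $F^s_{p,q}$-norm exceeds its quotient distance $\inf_{\psi\in\ker P_E}\|g+\psi\|_{F^s_{p,q}}$ by a factor of order $\La^{s-1/q}$, which directly lower-bounds $\|P_E\|_{F^s_{p,q}\to F^s_{p,q}}$.

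The principal obstacle is that in the regime $1/q<s<1/p$ the Haar wavelet characterization $F^{s,\dyad}_{p,q}\ne F^s_{p,q}$ fails, so $F^s_{p,q}$-norms of $g$ and the correction $\phi$ cannot be read off from Haar coefficients. One must rely on the Peetre maximal function or local-means characterization of $F^s_{p,q}$ to carry out the explicit computations on the step-function test pieces and their correctors. A secondary delicacy is the construction of the corrector $\phi$: this essentially amounts to an atomic-decomposition argument expressing a rank-$\La$ element of $\mathrm{range}(P_E)$ modulo Haar functions outside $E$, with a scale-cascade precise enough to saturate the exponent $\La^{s-1/q}$, and matching the dual exponent $1/q$ to the $\ell^q$-structure of $F^s_{p,q}$.
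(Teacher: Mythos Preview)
The duality reduction is fine and matches the paper (the paper actually works in the setting of (ii) and deduces (i), but this is immaterial).

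Your lower bound argument has a genuine error. You write that ``by scale invariance the operator norm is insensitive to simultaneous dilation and, after a preparatory reduction, one may treat the model case of consecutive exponents.'' Dilation by $2^m$ shifts all of $\log_2 A$ by $m$; it cannot compress an arbitrary $\Lambda$-element set of exponents into $\{0,\dots,\Lambda-1\}$. Since $\gamma_*$ is an \emph{infimum} over all $A$ with $\#A\ge\Lambda$, you must produce a large projection for \emph{every} such $A$, including highly lacunary ones, and your nested-interval construction with a kernel corrector $\phi$ has no evident mechanism for that. The paper handles this by a probabilistic construction that works for arbitrary $A$: one builds test functions $f_{N,t_2}=2^{-N/q}\sum_{2^k\in A} r_k(t_2)2^{-ks}\Upsilon_{k,N}$ from rescaled bumps sitting at the midpoints of Haar intervals, bounds $\|f_{N,t_2}\|_{F^s_{p,q}}\lc 1$ via a $BMO$-interpolation argument (Proposition~\ref{testfctprop}), and randomizes the projection as $T_{t_1}=\sum_{2^j\in A} r_j(t_1)P_{E_j}$. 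Khinchine's inequality then isolates the diagonal term $j=l=k$ and yields $\int\!\!\int\|T_{t_1}f_{N,t_2}\|^q\,dt_1dt_2\gc 2^{Nq(-s-1/q')}$, whence some $P_{E^\pm}$ has large norm. No assumption on the geometry of $A$ beyond $\#A\approx 2^N$ is used.

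Your upper bound sketch is also not yet a proof. The displayed inequality $\|(\sum_{k\in K}2^{ksq}|P_{E_k}f|^q)^{1/q}\|_p\lc\|f\|_{F^s_{p,q}}$ is precisely a Haar square-function bound in a regime where the Haar characterization fails, so it needs its own argument; and even granting it, it controls only the Littlewood--Paley content of $P_{E_k}f$ near frequency $2^k$, whereas $\|P_Ef\|_{F^s_{p,q}}$ involves $\psi_k*\sum_j P_{E_j}f$ for all pairs $(k,j)$. The paper's upper bound (Theorem~\ref{upperlabounds}) instead expands $2^{ks}\psi_k*P_E[\sum_l 2^{-ls}\psi_l*f_l]=\sum_{m,n}2^{-s(m+n)}T^k_{m,n}f_{k+m+n}$ and proves two families of bounds on $\|(\sum_k|T^k_{m,n}g_{k+m+n}|^q)^{1/q}\|_p$: one via Peetre maximal functions losing $2^{(m+n)(1/q-1/p)}$, and one via H\"older over the $\le 2^N$ active scales losing $2^{N(1/q-1/p)}$; an interpolation between an $L^q$ estimate with a crucial extra $2^{-m}$ gain (Lemma~\ref{Tmnq}) and the Peetre bound for large $r$ is what makes the double sum in $(m,n)$ converge to $O(2^{N(-s-1/q')})$. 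Your ``H\"older-in-scale conversion'' gestures at the second of these, but the first family and the $2^{-m}$ improvement are the substance of the argument.
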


Consequently the magnitude of  $\cG(F^s_{p,q},A)$ depends on the
cardinality of $A$ alone and we have $\cG(F^s_{p,q},A)\approx
(\#A)^{s-1/q}$ when $1/q<s<1/p$.
For the endpoint case $s=1/q$ or $s=-1/q'$ we still have failure of unconditional convergence,  but a new phenomenon occurs:
the quantity $\cG(F^{1/q}_{p,q},A) $ also depends on the density of $\log_2(A)=\{k: 2^k\in A\}$ on intervals of length $\approx \log_2\!\#A$. Define for any $A$ with $\#A\ge 2$
\begin{align*}
\overline {\cZ}(A)&= \max_{n\in \bbZ}
\# \{k: 2^k\in A, \, |k-n|\le\log_2\!\#A \}\,,
\\
\underline{\cZ}(A)&=\min_{2^n\in A}
\# \{k: 2^k\in A, \, |k-n|\le \log_2\#A \}\,.
\end{align*}
Notice that
$1\le \underline{\cZ}(A)\le \overline{\cZ}(A) \le1+ 2\log_2\!\#A$.

\begin{theorem} \label{GXAbds}
%The following statements hold for $\la>10$, with implicit constants independent of $\la$.
Let $A\subset \{2^n: n\ge 0\}$ such that $\#A\ge 2$.

(i) For  $1<p<q< \infty$,
$$
\underline{\cZ}(A)^{ 1-\frac 1q}\lc
\frac{ \cG(F^{1/q}_{p,q},A)}
{(\log_2\#A)^{\frac 1q} }
\lc
%(\log_2\#A)^{\frac 1q}
\overline{\cZ}(A)^{ 1-\frac1q}\,.
$$

 (ii) For $1<q<p<\infty$,
 $$
 \underline{\cZ}(A)^{\frac 1q}
 \lc \frac{\cG(F^{-1/q'}_{p,q},A)}{(\log_2\#A)^{1-\frac 1q} }\lc  \overline{\cZ}(A)^{\frac 1q}\,.
$$
\end{theorem}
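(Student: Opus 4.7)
The strategy is to prove each bound via a block decomposition of $A$ at the logarithmic window length $L := \lceil \log_2 \#A \rceil$, applying Theorem~\ref{sharpGabd} block-by-block for the upper bound and constructing explicit extremal Haar sums for the lower bound. Part (ii) is the dual statement and follows from (i) via the duality $(F^s_{p,q})^* \cong F^{-s}_{p',q'}$ together with the self-adjointness of $P_E$ on $L^2$, so I focus on (i).

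For the upper bound I partition the scales $\{k : 2^k \in A\}$ into consecutive clusters $A_1,\ldots,A_M$, each inside a $\log_2$-window of length at most $L$ and with $\#A_n \lc \overline{\cZ}(A)$. Setting $E_n = \{h_{k,\mu} \in E : 2^k \in A_n\}$, the orthogonality of the Haar system yields $P_E = \sum_n P_{E_n}$ and the images of the $P_{E_n}$ sit in disjoint dyadic frequency windows. Applying Theorem~\ref{sharpGabd}(i) at the perturbed exponent $s = 1/q + 1/L$, the factor $(\#A_n)^{1/L} \lc \overline{\cZ}(A)^{1/L} \approx 1$ bounds $\|P_{E_n}\|_{F^{1/q+1/L}_{p,q}}$ uniformly by an absolute constant. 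Since a single cluster spans a $\log_2$-window of length $L$, passing from $F^{1/q+1/L}_{p,q}$ to $F^{1/q}_{p,q}$ on that cluster costs only a factor $L^{1/q}$ from reorganizing the $\ell^q$-sum over scales. Summing over clusters through the Littlewood--Paley $\ell^q$-structure of $F^{1/q}_{p,q}$ then contributes a final $\overline{\cZ}(A)^{1-1/q}$, arising from the $\ell^q$-versus-$\ell^\infty$ gap between coherent and incoherent superposition of disjoint-frequency pieces.

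For the lower bound, fix a scale $n_0$ realizing the minimum in $\underline{\cZ}(A)$ and let $A' \subset A$ be the set of $2^k$ with $|k - n_0| \le L$, so $\#A' = \underline{\cZ}(A)$. Choose nested dyadic intervals $I_{k,\mu_k}$ indexed by $2^k \in A'$ and set $E = \{h_{k,\mu_k} : 2^k \in A'\}$. Build a test function $g$ whose Haar projection $P_E g$ is a coherent same-sign sum of the $h_{k,\mu_k}$, so that on the nested intersection the contributions add up at $\underline{\cZ}(A)$ scales to give $\|P_E g\|_{F^{1/q}_{p,q}} \gc L^{1/q}\,\underline{\cZ}(A)$, while the compensating tail $g - P_E g$ is scattered over pairwise disjoint dyadic supports at each scale, so that the Littlewood--Paley $\ell^q$-sum defining $\|g\|_{F^{1/q}_{p,q}}$ sees only one summand at each point and is forced down to $\lc \underline{\cZ}(A)^{1/q}$. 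The ratio yields the claimed lower bound.

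\textbf{Main difficulty.} The sharpness of the exponent $1 - 1/q$ attached to the cluster-size factor is the technical crux in both directions. For the upper bound one must verify that the blockwise perturbation $s = 1/q \to 1/q + 1/L$ in Theorem~\ref{sharpGabd}(i) gives only the $\overline{\cZ}(A)^{1/L} = O(1)$ loss and not a spurious polynomial in $N$, and that the inter-cluster combination exploits the disjoint-frequency structure optimally to separate the $L^{1/q}$ and $\overline{\cZ}(A)^{1-1/q}$ contributions. For the lower bound, the extremal $g$ must be engineered so that the Haar components inside $E$ interfere constructively on the telescoping nested supports while the complementary components disperse cleanly across disjoint supports at each of the $\underline{\cZ}(A)$ scales, so that both $\|P_E g\|$ and $\|g\|$ are simultaneously extremal at the precise rates matching the upper bound.
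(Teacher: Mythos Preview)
Your proposal has genuine gaps in both directions, and the paper's approach is quite different.

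\textbf{Upper bound.} The perturbation to $s=1/q+1/L$ does not give uniform constants. Theorem~\ref{sharpGabd} is stated with implicit constants that depend on $p,q,s$, and if you trace through the proof of the dual statement (Theorem~\ref{upperlabounds}) you see that the bound for the term $II_m(s)$ involves $\sum_{0\le n\le N} 2^{-n(s+1/q')}$, which for $s$ near $-1/q'$ is of order $|s+1/q'|^{-1}$. In your setting this is an extra factor of $L$, so your blockwise bound is $\lc L$, not $O(1)$, and the argument collapses. The paper does \emph{not} reduce the endpoint $s=-1/q'$ to the open range; instead it proves Theorem~\ref{upperlabounds}(ii) directly, replacing the triangle inequality over $n$ by an $\ell^q$-sum (Proposition~\ref{Tkmn-seqmod}) and using a vector-valued Peetre inequality (Proposition~\ref{vectpeetremodprop}) to extract the factor $Z_N(E)^{1/q}$ from the combinatorics of $\HF(E)$. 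That is where the exponent $1/q$ on the density factor actually comes from; it is not recovered by any inter-cluster recombination in $L_p(\ell_q)$, and your claim that summing disjoint-frequency blocks contributes exactly $\overline{\cZ}(A)^{1-1/q}$ is not justified (for $p\neq q$ there is no clean orthogonality of this kind).

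\textbf{Lower bound.} You never actually construct $g$; you only list properties it should have. In particular the assertion $\|P_E g\|_{F^{1/q}_{p,q}}\gc L^{1/q}\,\underline{\cZ}(A)$ has no visible source for the factor $L^{1/q}$: with only $\underline{\cZ}(A)$ nested Haar functions at scales inside a window of length $L$, there is no mechanism that produces a logarithmic gain. The paper's lower bound (Theorem~\ref{Zthm}) is genuinely different: it is probabilistic. One builds test functions $f_t$ as Rademacher-randomized sums of atoms placed at many translates across $\approx 4^N$ disjoint blocks, proves $\|f_t\|_{F^{-1/q'}_{p,q}}\lc N^{1/q}$ via Proposition~\ref{testfctprop} (an $L_q$--$BMO$ interpolation), and then obtains $\|T_{t_1} f_{t_2}\|\gc N Z^{1/q}$ on average using Khinchine's inequality to decouple scales. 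The factor $N^{1/q'}Z^{1/q}$ arises from this averaging, not from constructive interference on a single nested family. The paper explicitly notes that deterministic constructions are possible but defers them to a separate paper; your sketch does not supply one.

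Finally, the paper derives part (i) from part (ii) by the duality argument of \S\ref{dualitysect}, rather than proving (i) directly.
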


We remark  that $\overline{\cZ}(A)=O(1)$ when  $\# A\approx 2^N$ and $\log_2(A) $ is $N$-separated.  On the other hand, for $A=[1,2^N]\cap \bbN$ we have  $\underline{\cZ}(A)\ge N$.
Hence it follows that
the lower and upper Haar projection numbers for the endpoint   cases
 have now different growth rates:

\begin{corollary} \label{endpointGabds}
%The following statements hold for $\la>10$, with implicit constants independent of $\la$.
For $\Lambda\ge 4$ we have the following equivalences.

(i) For $1<p<q< \infty$,
$$\gamma_*(F^{1/q}_{p,q};\La)
\approx
(\log_2 \La)^{1/q}
$$
and
$$\gamma^*(F^{1/q}_{p,q};\La)
\approx
\log_2 \La\,.
$$

(ii)  For $1<q<p<\infty$,
$$
\gamma_*(F^{-1+1/q}_{p,q};\La)
\approx (\log_2 \Lambda)^{1-1/q}
$$
and
$$
\gamma^*(F^{-1+1/q}_{p,q};\La)
\approx \log_2 \Lambda\,.
$$
\end{corollary}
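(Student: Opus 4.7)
The plan is to combine Theorem \ref{GXAbds} with two extremal choices of the Haar frequency set $A$: a \emph{dense block} realizing $\overline{\cZ}\approx\underline{\cZ}\approx\log_2\La$, and a \emph{lacunary block} realizing $\overline{\cZ}\approx\underline{\cZ}\approx 1$. Part (ii) will follow from part (i) by duality: since $P_E$ is self-adjoint with respect to the $L_2$ pairing and $(F^s_{p,q})^*=F^{-s}_{p',q'}$, we have $\cG(F^s_{p,q},A)=\cG(F^{-s}_{p',q'},A)$. Applying this to part (i) and relabelling $(p',q')$ as $(p,q)$ converts the hypothesis $1<p<q<\infty$ into $1<q<p<\infty$ and the exponent $s=1/q$ into $s=-1+1/q=-1/q'$, which is exactly the setup of part (ii). Hence it suffices to prove (i).

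For $\gamma^*(F^{1/q}_{p,q};\La)$ the upper bound $\lc \log_2\La$ comes from the universal inequality $\overline{\cZ}(A)\le 1+2\log_2\!\#A$ together with the upper estimate of Theorem \ref{GXAbds}(i):
\[
\cG(F^{1/q}_{p,q},A)\lc (\log_2\!\#A)^{1/q}\overline{\cZ}(A)^{1-1/q}\lc (\log_2\La)^{1/q}(\log_2\La)^{1-1/q}=\log_2\La
\]
whenever $\#A\le\La$. For the matching lower bound I test against the dense block $A_{\rm d}=\{2^k:0\le k<\La\}$; then $\#A_{\rm d}=\La$, and a direct count shows $\underline{\cZ}(A_{\rm d})\gc\log_2\La$ (its minimum over $2^n\in A_{\rm d}$ is attained near the endpoints $k=0,\La-1$, but even there the count is at least $\floor{\log_2\La}+1$). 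The lower estimate in Theorem \ref{GXAbds}(i) then yields $\cG(F^{1/q}_{p,q},A_{\rm d})\gc \log_2\La$.

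For $\gamma_*(F^{1/q}_{p,q};\La)$ the lower bound $\gc(\log_2\La)^{1/q}$ is immediate from Theorem \ref{GXAbds}(i), since $\underline{\cZ}(A)\ge 1$ always and the constraint $\#A\ge\La$ forces $\log_2\!\#A\ge\log_2\La$. For the matching upper bound I take a lacunary witness: with $N=\floor{\log_2\La}+1$, set $A_{\rm l}$ so that $\log_2(A_{\rm l})=\{0,N,2N,\dots,(\La-1)N\}$; then $\#A_{\rm l}=\La$ and every integer window of length $2\log_2\La$ meets $\log_2(A_{\rm l})$ in at most three points, so $\overline{\cZ}(A_{\rm l})=O(1)$, and the upper estimate of Theorem \ref{GXAbds}(i) gives $\cG(F^{1/q}_{p,q},A_{\rm l})\lc(\log_2\La)^{1/q}$. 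Given Theorem \ref{GXAbds} the argument is a routine extraction; the only mild bookkeeping point is verifying that $\underline{\cZ}(A_{\rm d})$ does not degenerate at the endpoints of the dense block, which is elementary. The substantive work of the paper lies in Theorem \ref{GXAbds} itself; here the corollary is just the observation that the dense-versus-lacunary dichotomy realizes precisely the advertised logarithmic growth rates.
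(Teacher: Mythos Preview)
Your proof is correct and follows essentially the same approach as the paper: the paper derives the corollary directly from Theorem~\ref{GXAbds} by observing that a dense block $A$ with $\log_2A=\{1,\dots,2^N\}$ has $\underline{\cZ}(A)\ge N$, while an $N$-separated set $A$ with $\#A\approx 2^N$ has $\overline{\cZ}(A)=O(1)$, exactly your two test configurations, and handles the passage between (i) and (ii) by the duality argument of \S\ref{dualitysect}.
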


The proof of the lower bounds for the lower Haar projection numbers also shows
that
for any infinite subset  $A$
 of $\{2^n:n\ge 0\}$ there is a subset $E$ of the Haar system, with Haar frequency set contained in $A$,  so that  $P_E$ does not extend to a bounded operator
on $F^s_{p,q}$,
in the $s$-ranges of Theorem \ref{sharpGabd}.

\subsection*{\it Guide through the paper} In \S\ref{prelsect} we discuss some preliminary facts about  Peetre maximal functions and Triebel-Lizorkin spaces.
In \S\ref{upperbdsect} we prove the sharp upper bounds for Haar projection operators. In \S\ref{testfctsect} we provide estimates for suitable families of test functions in $F^s_{p,q}$ for $p>q$ and $s\le -1/q'$.
In \S\ref{lowerbdsect-mosts}  we determine the behavior of
$\gamma_*(F^s_{p,q};\La)$ for large  $\La$, and also the
behavior of
$\gamma^*(F^s_{p,q};\La)$ if $s<-1/q'$.
 In
 \S\ref{gammaupperstar}  we prove refined  lower bounds for the endpoint
$s=-1/q'$, $p\ge q$ which  yield in particular  precise bounds for
$\gamma^*(F^{-1/q'}_{p,q};\La)$. Concluding remarks are made in \S\ref{remarksect}.

\section{Preliminaries}
\label{prelsect}

\subsection{Littlewood-Paley decompositions and Triebel-Lizorkin spaces}
\label{LPsect}
We pick  functions $\psi_0$, $\psi$ such that $|\widehat \psi_0(\xi)|>0$ on
$(-\eps,\eps)$ and $|\widehat \psi(\xi)|>0$ on $\{\xi:\eps/4 < |\xi| < \eps)$
for some fixed $\eps>0$.
We further assume
\Be\label{psicanccond} \int\psi(x) x^n dx =0 \text{ for } n=0,1,\dots, M_1\Ee
(if $M_1$ is a large given integer).

Let now $\varphi_0 \in \cS(\bbR)$ be a compactly supported function with
$\varphi_0 \equiv 1$ on $[-4/3,4/3]$ and $\varphi_0 \equiv 0$ on $\bbR\setminus
[-3/2,3/2]$\,. Putting $\varphi = \varphi_0 - \varphi_0(2\cdot)$ we obtain a
smooth dyadic decomposition of unity, i.e.,
$\varphi_0(\cdot)+\sum_{k\geq 1} \varphi(2^{-k}\cdot)\equiv 1$.
In addition, we set $\beta_0(\xi) := \varphi_0(2\xi/\eps)/\psi_0(\xi)$ and
$\beta(\xi) := \varphi(2\xi/\eps)/\psi(\xi)$. Hence, $\beta_0, \beta$ are
well-defined Schwartz functions supported on $(-3\eps/4,3\eps/4)$ and
$\{\xi:\eps/3<|\xi|<3\eps/4\}$, respectively, such that
\Be\label{Calderon}
\widehat \psi_0(\xi)  \beta_0(\xi) +\sum_{k=1}^\infty
\widehat \psi(2^{-k}\xi) \beta(2^{-k}\xi)=1 \text{ for all $\xi\in \bbR$\,,}
\Ee
see also \cite{fr-ja, FrJa90}. The Triebel-Lizorkin space $F^s_{p,q}(\bbR)$
is usually defined via a smooth dyadic decomposition of unity on the Fourier
side, generated for instance by $\varphi_0$ and $\varphi$ defined above.
We define
the operators $L_k$ by
$\widehat {L_0f}(\xi)=\varphi_0(\xi) \widehat f(\xi)$ and
$\widehat {L_kf}(\xi)=\varphi(2^{-k}\xi) \widehat f(\xi)$ and obtain the usual
example for an inhomogeneous Littlewood-Paley decomposition. In particular
\Be \label{repr}
f=\sum_{k=0}^\infty L_k f
\Ee holds for all Schwartz functions $f$, with convergence in $\cS'(\bbR)$ and
all
$L_p(\bbR)$. For $0<p<\infty$, $0<q\leq \infty$ and $s\in \bbR$ the
Triebel-Lizorkin space $F^s_{p,q}(\bbR)$ is the collection of all tempered
distributions $f\in \cS'(\bbR)$ such that
\Be\label{TL}
  \|f\|_{F_{p,q}^s}= \Big\|\Big(\sum_k2^{ksq}| L_k
f|^q\Big)^{1/q}\Big\|_p
\Ee
is finite (usual modification in case $q=\infty$).

Based on \eqref{Calderon} it can be proved using
vector-valued singular integrals \cite{BCP}, see also
\cite[\S 2.4.6.]{triebel2} and \cite{Ry99}, that
\Be\label{localmeans}
\|f\|_{F^s_{p,q}} \approx \Big\| \Big(\sum_{k=0}^\infty
2^{ksq}|\psi_k*f|^q\Big)^{1/q}
\Big\|_p
\Ee
with $\psi_0,\psi$ from above, $\psi_k(x) = 2^k\psi(2^kx)$ and $M_1+1>s$\,.
First of all this characterization yields a useful version of \eqref{TL} with
operators $\widetilde L_k$ that reproduce the $L_k$. Indeed, it is easy to find
compactly supported Schwartz functions $\widetilde \varphi_0$, $\widetilde
\varphi$ such that $\widetilde \varphi_0(\xi) = 1$ on $\supp\,\varphi_0$,
respectively for $\widetilde \varphi$, and that $\widehat \psi_0 = \widetilde
\varphi_0$ and $\widehat \psi = \widetilde \varphi$ are admissbible for
\eqref{localmeans}. With $\widetilde L_k$ as above we have
$\widetilde L_k L_k=L_k$ for $k=0,1,2,\dots$.

The above characterization \eqref{localmeans} allows for choosing $\psi_0,\psi$
compactly supported.
% (see \cite{Ry99, Ul12}).
Characterizations of this type are
termed ``local means'' in  Triebel \cite{triebel2}, \S 2.4.6, and turn out to
be convenient for the purpose of this paper.

\subsection{Peetre maximal functions} \label{Peetremax}

The main tool to estimates operators in $F^s_{p,q}$ spaces are the
vector-valued maximal inequalities by Fefferman-Stein \cite{feffstein}  and a
variant due to Peetre \cite{Pe}.
We shall need a (variant of) an endpoint version for the  Peetre maximal operators which was proved
in \S6.1 of \cite{cs-lms} using an argument involving the
$\#$-function  of Fefferman-Stein \cite{feffstein-Hp}. Let $\cE(r)$ be the
space of all tempered distributions whose Fourier transform is supported in $
\{\xi:|\xi|\le r\}$.
Let
\Be \label{peetremod}\fM^{r}_n g(x)= \sup_{|y|\le 2^{n+2}/r}|g(x+y)|.\Ee
Then, the one-dimensional version of the  result in \cite{cs-lms} states that
for any sequence of positive numbers $r_k$,  $0<p,q<\infty$,
and for any sequence of functions $f_k \in \cE(r_k)$,
\Be \label{peetremodineq}
\Big\|\Big(\sum_{k} |\fM_n^{r_k} f_k|^q\Big)^{1/q}\Big\|_p
\lc \max \{ 2^{n/p}, 2^{n/q}\}
\Big\|\Big(\sum_{k} | f_k|^q\Big)^{1/q}\Big\|_{p}\,.
\Ee
The original result by Peetre is equivalent with the similar inequality with constant
$C_\eps 2^{n\eps} \max \{ 2^{n/p}, 2^{n/q}\}$ on the right hand side. Here we
also need a vector-valued version with variable $n$. We formulate it for $p\ge
q$ since this is the version used here.

\begin{proposition} \label{vectpeetremodprop}
Let $0<q\le p<\infty$. For any sequence of positive numbers $r_k$,
%$0<p,q<\infty$,
and for any doubly indexed sequence of functions $\{f_{k,n}\}_{k,n\ge 0}$, with  $f_{k,n} \in \cE(r_k)$,
%$k,n\in \bbN$
\Be \label{vectpeetremodineq}
\Big\|\Big(\sum_{k,n} 2^{-n}|\fM_n^{r_k} f_{k,n}|^q\Big)^{1/q}\Big\|_p
\lc
\Big\|\Big(\sum_{k,n} | f_{k,n}|^q\Big)^{1/q}\Big\|_p\,.
\Ee
\end{proposition}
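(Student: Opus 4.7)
My plan is to follow the sharp-maximal-function ($\#$-function) scheme from \cite{cs-lms} that established \eqref{peetremodineq}. Set
\[
F(x):=\sum_{k,n}2^{-n}|\fM_n^{r_k}f_{k,n}(x)|^q, \qquad G(x):=\sum_{k,n}|f_{k,n}(x)|^q.
\]
After raising the desired inequality to the $q$-th power, the claim becomes $\|F\|_{L^{p/q}}\lesssim\|G\|_{L^{p/q}}$; since $p/q\ge 1$, the Fefferman--Stein sharp-function theorem is available. My target would be the pointwise majorization
\[
F^\#(x)\;\lesssim\;M\bigl(G^{t/q}\bigr)(x)^{q/t}
\]
for some $t\in(0,q)$; granted this, $\|F\|_{p/q}\lesssim\|F^\#\|_{p/q}\lesssim\|M(G^{t/q})\|_{p/t}^{q/t}\lesssim\|G\|_{p/q}$ using $L^{p/t}$-boundedness of the Hardy--Littlewood maximal $M$ (valid since $p/t>1$, as $t<q\le p$).

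Before attempting the main estimate I would record why the two direct approaches fail. Using the pointwise Peetre bound $\fM_n^{r_k}g(x)\lesssim 2^{n/t}M_t g(x)$ for $g\in\cE(r_k)$ and multiplying by $2^{-n}$ produces a prefactor $2^{n(q/t-1)}$; boundedness of this in $n$ requires $t\ge q$, while the vector-valued Fefferman--Stein inequality for $M_t$ at exponents $(p,q)$ requires $t<q$, and these constraints are incompatible. Alternatively, applying \eqref{peetremodineq} at each fixed $n$ (with $\max\{2^{n/p},2^{n/q}\}=2^{n/q}$ since $p\ge q$) and then Minkowski's inequality in $L^{p/q}$ cancels the weight $2^{-n}$ against $2^{n/q}$ exactly, but leaves $\sum_n\|(\sum_k|f_{k,n}|^q)^{1/q}\|_p^q$ on the right, which is an upper (not lower) bound for $\|G\|_{L^{p/q}}$.

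The hard part will be establishing the sharp-function estimate. On a ball $B$ of radius $R$ centered at $x_0$, I would split the contribution $2^{-n}|\fM_n^{r_k}f_{k,n}|^q$ on $B$ according to whether the Peetre radius $2^{n+2}/r_k$ is smaller or larger than $R$. In the regime $2^n\gg r_kR$, the function $\fM_n^{r_k}f_{k,n}$ is essentially constant on $B$ (shifts by $x\in B$ barely perturb the large sup-window), so its contribution to the oscillation can be estimated through a Calder\'on-type reproducing formula at the bandlimit $r_k$ (as in \S\ref{LPsect}) and the rapid decay of the reproducing kernel; summing geometrically against the weight $2^{-n}$ produces a pointwise multiple of $M(G^{t/q})(x_0)^{q/t}$. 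In the complementary regime $2^n\lesssim r_kR$, I would apply Peetre's inequality with exponent $1/t$, $t\in(0,q)$, to bound $|\fM_n^{r_k}f_{k,n}|^q$ pointwise by $2^{nq/t}M_t(f_{k,n})^q$; after a standard enlargement of $B$ to compare $M_t(f_{k,n})$ at nearby points with $x_0$, the truncated geometric sum $\sum_{2^n\lesssim r_kR}2^{n(q/t-1)}M_t(f_{k,n})^q$ is again controlled by $M(G^{t/q})(x_0)^{q/t}$. The hypothesis $p>q$ enters twice: through the sharp-function theorem at exponent $p/q>1$, and through the freedom to choose $t<q$ with $p/t>1$.
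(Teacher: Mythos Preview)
Your approach is essentially the one the paper has in mind: the authors omit the proof entirely, saying only that it is ``a straightforward variant of the argument in \S6.1 of \cite{cs-lms}'', and that argument is precisely the $\#$-function scheme you outline---reduce to $\|F\|_{p/q}\lesssim\|G\|_{p/q}$, establish a pointwise sharp-function bound $F^\#\lesssim M(G^{t/q})^{q/t}$ for some $t<q$ by splitting on each ball according to the size of the Peetre window relative to the ball's radius, and finish with Fefferman--Stein plus boundedness of $M$ on $L^{p/t}$. One small point: you write both ``since $p/q\ge1$ the sharp-function theorem is available'' and later ``the hypothesis $p>q$ enters through the sharp-function theorem at exponent $p/q>1$''; the latter is the correct requirement, so the boundary case $p=q$ should be disposed of separately (which is immediate from Fubini and the scalar inequality \eqref{peetremodineq}).
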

We omit the proof since it is a straightforward variant of
the argument in  \S6.1 of \cite{cs-lms}.

%In what follows we also be convenient to  use the notation\Be\label{maxopabbr}\cM^k_n =\fM^{2^{k}}_n\,.\Ee

\subsection{Duality}\label{dualitysect}
We show that the
statements for (i) and (ii) in Theorems \ref{sharpGabd} and \ref{endpointGabds} are equivalent,  by duality.

Given an integral operator $T$ acting on Schwartz functions
let  $T'$ denote the transposed operator with the property
$$\int Tf(x) g(x) dx= \int f(y) T' g(y) dy.$$ Note that for the Haar projection
operators $P_E$ we have  $P_E=P_E'$.  Also if $Tf=K*f$, the operator of
convolution then $T'$ is the operator of convolution with $K(-\cdot)$.

Assume that $1<p,q<\infty$ and let $s\in \bbR$ such that $P_E: F^{-s}_{p',q'}
\to F^{-s}_{p',q'}$ is bounded  with operator norm $A$.
%, i.e.$$\Big\|\Big(\sum_k 2^{-k sq'}| L_k  P_E g|^{q'}\Big)^{1/q'}\Big\|_{p'}\lc A\Big\|\Big(\sum_k| L_k g|^{q'}\Big)^{1/q'}\Big\|_{p'}\,.$$
 Then we need to  show that
\Be \label{dual}\Big\|\Big(\sum_k 2^{k sq}| L_k  P_E f|^{q}\Big)^{1/q}\Big\|_p\lc
A\Big\|\Big(\sum_k2^{ksq}| L_k f|^q\Big)^{1/q}\Big\|_p\,,
\Ee
with implicit constant depending only on $p,q,s$ and the choice of the Schwartz functions defining $L_k$.

To see \eqref{dual} we may assume that the $k$-sum on the left hand side is
extended over a finite subset $\fK$ of $\bbN\cup\{0\}$.
%(with bounds not depending on $\fK$.
Then  there  is $G=\{G_k\}\in L_{p'}(\ell_{q'})$ with
$\|G\|_{L_{p'}(\ell_{q'})}\leq 1$ so that the left hand side of \eqref{dual} is
finite and equal to
\begin{align*}
&\int \sum_{k=0}^\infty 2^{ks} L_k P_Ef (x)
G_k(x) dx\,
\notag \\
&=
\sum_{k=0}^\infty 2^{ks}  \sum_{j=0}^\infty
\int L_k P_E
  L_j \widetilde L_jf (x) G_k(x) dx
%\notag
\\&=
%\sum_{k=0}^\infty 2^{ks}  \sum_{j=0}^\infty
%\int \widetilde L_j f (y) L_j'P_E'L_k'G_k(y) dy
%\notag\,\\&
\int \sum_{j=0}^\infty \widetilde L_j f (y) L_j'P_E'\big[{\scriptstyle\sum}_k 2^{ks}
L_k'G_k\big](y) dy.
%\label{dual2}
\end{align*}
Using
H\"older's inequality, we estimate the last displayed expression
% in \eqref{dual2}
by
\begin{align*}&\Big\|\Big(\sum_j2^{jsq}|\widetilde L_j f|^q\Big)^{1/q} \Big\|_p
\,\Big\| \Big(\sum_{j=0}^\infty 2^{-jsq}|
 L_j'P_E\big[{\scriptstyle\sum}_k
2^{ks}L_k'G_k \big]|^{q'} \Big)^{1/q'}\Big\|_{p'}
\\
&\lc \|f\|_{F^{s}_{p,q}} \|P_E\big[{\scriptstyle\sum}_k  2^{ks} L_k' G_k\big]\|_{F^{-s}_{p',q'}}
\,\lc \|f\|_{F^{s}_{p,q}} A
\Big\|\sum_k 2^{ks} L_k' G_k\Big\|_{F^{-s}_{p',q'}}
\end{align*}
by assumption.
Finally
\begin{align*}
\Big\|\sum_k 2^{ks} L_k' G_k\Big\|_{F^{-s}_{p',q'}}&=
\Big\|\Big(\sum_{j}2^{-jsq}\Big|\sum_{k=j-2}^{j+2} 2^{ks}L_j L_k' G_k
\Big|^{q'}\Big)^{1/q'}\Big\|_{p'}
\\
&\le C_{p'} \Big\|\Big(\sum_k |G_k|^{q'}\Big)^{1/q'}\Big\|_{p'}  \lc 1\,,
\end{align*}
and \eqref{dual} is proved.

%\noi{\it Remark:} An analogous  duality argument applies to Besov spaces.

\section{Upper bounds for   Haar  projections}
%$\gamma^*(F^s_{p,q};\La)$
\label{upperbdsect}
For the upper bounds asserted in Theorem \ref{sharpGabd} it suffices to consider  the projection numbers
$\gamma^*(F^s_{p,q};\Lambda)$ and $\gamma_*(F^s_{p,q};\Lambda)$
for the choice of  $\La={2^N}$, for large $N$.
The following theorem gives a refined version of these upper bounds.

For  a subset $E$ of the Haar system let
%A\subset\{2^n: n\in \bbN\}$ we define
\Be\label{cZ} Z_N(E)=\max_{k\in \bbN} \#\{n: 2^n\in \HF(E),\,|n-k|\le N\}
\Ee
Clearly $1\le Z_N(E)\le 2N+1$ for all $E\subset \sH$.

\begin{theorem} \label{upperlabounds}
Let $1<q<p<\infty$, $N\ge 2$. There is $N_0(p,q,s)$ such that for $N\ge
N_0(p,q,s)$ the following holds for subsets $E$ of the Haar system with
$\#\text{\rm HF}(E)\le2^{N}$ (with implicit constants depending on $p,q,s$).

(i)  If $-1/p'<s<-1/q'$ then
$$\|P_E\|_{F^s_{p,q}\to F^s_{p,q}} \lc 2^{N(-s-\frac 1{q'})}.$$

(ii) For the case  $s=-1/q' $ we have
$$\|P_E\|_{F^{-1/q'}_{p,q}\to F^{-1/q'}_{p,q}} \lc N^{1-1/q} Z_N(E)^{1/q}\,.
$$
\end{theorem}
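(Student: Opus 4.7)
The plan is to use the local means characterization \eqref{localmeans} to express $\|P_E f\|_{F^s_{p,q}}\approx \|(\sum_n 2^{nsq}|\psi_n*P_Ef|^q)^{1/q}\|_p$, with $\psi$ chosen to have many vanishing moments. We decompose by Haar frequency level, writing $P_E=\sum_{j\in A_{\log}}P_{E_j}$ where $A_{\log}=\log_2\HF(E)$ and $P_{E_j}f=\sum_\mu\lambda_{j,\mu}h_{j,\mu}$ with $\lambda_{j,\mu}=2^j\langle f,h_{j,\mu}\rangle$. The disjointness of Haar supports at each level identifies $|P_{E_j}f|=\sum_\mu|\lambda_{j,\mu}|\bbone_{I_{j,\mu}}$ as a step function at scale $2^{-j}$.

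The first technical ingredient is sharp pointwise control of $\psi_n*h_{j,\mu}$. For $n\le j$, a second-order Taylor expansion of $\psi_n$ at the midpoint $c_{j,\mu}$ combined with the oddness of $h_{j,\mu}$ about $c_{j,\mu}$ yields $|\psi_n*h_{j,\mu}(x)|\lesssim 2^{2(n-j)}(1+2^n|x-c_{j,\mu}|)^{-M}$ for any $M$. For $n>j$, the moment cancellation of $\psi$ permits writing $\psi_n*h_{j,\mu}(x)=\Psi_n(x-a_{j,\mu})-2\Psi_n(x-b_{j,\mu})+\Psi_n(x-c'_{j,\mu})$ where $\Psi_n$ is a Schwartz bump at scale $2^{-n}$ and $a_{j,\mu}<b_{j,\mu}<c'_{j,\mu}$ are the endpoints and midpoint of $I_{j,\mu}$. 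Summing over $\mu$, these bounds control $\psi_n*P_{E_j}f$ pointwise by a rapidly $|n-j|$-decaying factor times a Peetre-type maximal function of $P_{E_j}f$.

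Invoking Proposition \ref{vectpeetremodprop} (valid since $q\le p$) together with a Schur-test summation in $(n,j)$ using the off-diagonal decay, one reduces to the master inequality
$$\|P_E f\|_{F^s_{p,q}}\lesssim \Big\|\Big(\sum_{j\in A_{\log}}2^{jsq}|P_{E_j}f|^q\Big)^{1/q}\Big\|_p.$$
We then control the right-hand side by $\|f\|_{F^s_{p,q}}$ with the sharp $N$-dependent factor. Using $f=\sum_k \widetilde L_k L_k f$ and analogous pointwise bounds on $\widetilde L_k h_{j,\mu}$, the coefficient $|\lambda_{j,\mu}|$ is controlled by a weighted sum of Peetre maximal functions of $L_k f$ at the points $c_{j,\mu}$. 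For part (i) with $-1/p'<s<-1/q'$, an application of Hölder's inequality in the $j$-variable with exponents $(q,q')$, using $\#A_{\log}\le 2^N$ and the condition $-s-1/q'>0$, produces the geometric factor $2^{N(-s-1/q')}$; the remaining $\ell^q$-norm of maximal functions is controlled by $\|f\|_{F^s_{p,q}}$ via Proposition \ref{vectpeetremodprop}. For part (ii) at the endpoint $s=-1/q'$, the geometric Hölder factor degenerates to a counting factor, and we refine by partitioning $A_{\log}$ into consecutive windows of length $N$, each containing at most $Z_N(E)$ elements; applying Hölder within and between windows yields the factor $N^{1-1/q}Z_N(E)^{1/q}$.

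The main obstacle will be the endpoint refinement in part (ii): improving the crude Hölder bound $(\#A_{\log})^{1/q'}\lesssim 2^{N/q'}$ to the sharp $N^{1-1/q}Z_N(E)^{1/q}$ requires simultaneously exploiting the off-diagonal $(n,j)$ decay beyond the window scale $N$ and the density structure of $A_{\log}$ within each window. This in turn rests on using Proposition \ref{vectpeetremodprop} at its critical endpoint scaling without loss of sharpness.
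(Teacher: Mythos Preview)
Your two-step organization --- first a ``master inequality'' reducing $\|P_Ef\|_{F^s_{p,q}}$ to the sequence-space norm $\|(\sum_{j\in A_{\log}}2^{jsq}|P_{E_j}f|^q)^{1/q}\|_p$, then bounding the latter by $C_N\|f\|_{F^s_{p,q}}$ --- is a legitimate alternative to the paper's route. The paper does \emph{not} separate synthesis and analysis; it works directly with the three-index pieces $T^k_{m,n}$ (output LP level $k$, Haar level $j=k+m$, input LP level $l=j+n$), proves sharp $L_\rho\to L_\rho$ bounds for $T^k_{m,n}$ (Lemma~\ref{Tmnq}) and then \emph{interpolates} between $\rho=q$ and $\rho$ large (Proposition~\ref{Tkmn-seq}). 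Your master inequality is the synthesis direction for Haar atoms and does hold uniformly on the range $-1<s<0$, $1<q\le p$; however, your stated justification via Proposition~\ref{vectpeetremodprop} is off --- that proposition requires band-limited inputs, while $|P_{E_j}f|$ is a step function. The correct tool here is the Fefferman--Stein vector-valued Hardy--Littlewood maximal inequality together with a Schur/Young summation in $n-j$, using the decay rates $2^{(n-j)(s+1)}$ for $n\le j$ and $2^{(n-j)s}$ for $n>j$.

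The more serious gap is in your second step. The growth factor $2^{N(-s-1/q')}$ does \emph{not} arise from a single H\"older inequality in the $j$-variable using $\#A_{\log}\le 2^N$. Tracing the coefficient bound $2^{js}|\lambda_{j,\mu}|\lesssim\sum_n b_n\,\cM_{n_+}^{\,j+n}\!\big(2^{(j+n)s}L_{j+n}f\big)$ with $b_n=2^{-n(s+1)}$ for $n>0$, the Peetre estimate~\eqref{peetremodineq} costs $2^{n/q}$, so the $n>0$ terms contribute $\sum_{n>0}2^{n(-s-1/q')}$, which diverges. To obtain the sharp bound you must split at $n=N$: for $0<n\le N$ apply Peetre directly (summing to $2^{N(-s-1/q')}$), and for $n>N$ first use H\"older in $j$ to pass from $\ell^q$ to $\ell^p$ at cost $2^{N(1/q-1/p)}$, then Peetre with the smaller loss $2^{n/p}$ so that $s+1/p'>0$ makes the tail summable. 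This is precisely the two-bound dichotomy in Proposition~\ref{Tkmn-seq}(i), and it, rather than ``H\"older in $j$'', is the mechanism producing the factor. For the endpoint $s=-1/q'$ the same split no longer closes; one needs to pull the $n$-sum inside the $\ell^q$-norm and invoke Proposition~\ref{vectpeetremodprop} together with the counting argument $\#\{n\le N:2^{l-n}\in\HF(E)\}\le Z_N(E)$ (cf.\ Proposition~\ref{Tkmn-seqmod}). Your ``partition $A_{\log}$ into length-$N$ windows and apply H\"older within and between'' does not by itself produce $N^{1/q'}Z^{1/q}$ --- the $N^{1/q'}$ comes from H\"older over $0\le n\le N$, and the $Z^{1/q}$ from the change of summation index $(j,n)\mapsto l=j+n$, which are operations in $n$, not in $j$.
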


The remainder of this section is devoted to the proof of Theorem \ref{upperlabounds}.
\subsection*{\it Two preliminary estimates}

We state two lemmata which will be used frequently when estimating the Haar
projection operators $P_E$. In what follows let $\psi_k$ be as in \S
\ref{LPsect}.

\begin{lemma} \label{psicanc}
Let  $k\le j$. Then, with  $y_{j,\mu}:= 2^{-j}(\mu+\tfrac 12)$, the support of
$h_{j,\mu}*\psi_k$ is contained in $[y_{j,\mu} - 2^{-k},y_{j,\mu} +2^{-k}].$ Moreover,
%$$\supp (h_{j,\mu}*\psi_k) \subset [y_{j,\mu} - 2^{-k},y_{j,\mu} +2^{-k}]\,.$$
$$\|h_{j,\mu}*\psi_k\|_\infty \lc 2^{2k-2j}.$$
\end{lemma}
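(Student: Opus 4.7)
My plan is to combine the compact support of $\psi$ (chosen via the local means setup in \S\ref{LPsect}) with the first-order cancellation built into $h_{j,\mu}$ through its antisymmetry about the midpoint $y_{j,\mu}$.

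For the support assertion, I use that $\psi_k=2^k\psi(2^k\cdot)$ is supported in an interval of length $O(2^{-k})$ about the origin, together with $\supp h_{j,\mu}\subset[y_{j,\mu}-2^{-j-1},y_{j,\mu}+2^{-j-1}]$. Since $k\leq j$ one has $2^{-j-1}\leq 2^{-k-1}$, so $h_{j,\mu}*\psi_k$ is supported in $[y_{j,\mu}-2^{-k},y_{j,\mu}+2^{-k}]$ after a harmless normalization of $\supp\psi$.

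For the sup-norm bound, I would substitute $y=y_{j,\mu}+t$ in the convolution integral and exploit
$h_{j,\mu}(y_{j,\mu}+t)=-\operatorname{sign}(t)\,\bbone_{\{|t|<2^{-j-1}\}}$
to reduce it to
\begin{equation*}
(h_{j,\mu}*\psi_k)(x)=\int_0^{2^{-j-1}}\bigl[\psi_k(x-y_{j,\mu}+t)-\psi_k(x-y_{j,\mu}-t)\bigr]\,dt.
\end{equation*}
Applying the mean value theorem bounds each integrand by $2t\|\psi_k'\|_\infty=2t\cdot 2^{2k}\|\psi'\|_\infty$. Integrating over $t\in[0,2^{-j-1}]$ then yields the desired $O(2^{2k-2j})$, which improves the trivial estimate $\|h_{j,\mu}\|_1\|\psi_k\|_\infty\approx 2^{k-j}$ by the extra factor $2^{k-j}$ produced by the antisymmetric cancellation.

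This is a routine cancellation argument and I do not foresee any substantial obstacle; the only mild technical point is to fix $\psi$ with sufficiently small compact support so that the constants in the support statement can be absorbed, which is available from the local means characterization cited in \S\ref{LPsect}.
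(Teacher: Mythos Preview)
Your proposal is correct and follows essentially the same approach as the paper: both arguments use the first-order cancellation of $h_{j,\mu}$ together with the mean value theorem and $\|\psi_k'\|_\infty=O(2^{2k})$. The only cosmetic difference is that the paper invokes $\int h_{j,\mu}=0$ to subtract the constant $\psi_k(x-y_{j,\mu})$ from the integrand, while you pair the antisymmetric contributions explicitly; the resulting integrals and bounds are the same.
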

\begin{proof}
The support property is immediate due to the support property of $\psi_k$. Since $\int h_{j,\mu}(y) dy=0$ we have
%\in \supp h_{j,\mu}$,
$$h_{j,\mu}*\psi_k(x)
= \int\big(\psi_k(x-y)-\psi_k(x-y_{j,\mu})\big)h_{j,\mu}(y) dy$$
and using  $\psi_k'=O(2^{2k})$, we get
\[|h_{j,\mu}(x)|\lc \int_{y_{j,\mu}-2^{-j}}^{y_{j,\mu}+2^{-j}} 2^{2k} |y-y_{j,\mu}|dy\lc 2^{2k-2j}\,.\qedhere\]
\end{proof}

\begin{lemma} Let $0< p \leq \infty$.\label{canc-const}
(i) Suppose that $k\ge j$, and let $x\in \bbR$ such that
$$ \min\,\big \{ |x-2^{-j}\mu|,\, |x-2^{-j}(\mu+\tfrac12)|, \,|x-2^{-j}(\mu+1)|\big\} \,\ge 2^{-k}.$$
Then $h_{j,\mu}*\psi_k(x)= 0$.

(ii)
$\|h_{j,\mu}*\psi_k\|_p\lc 2^{-k/p}$ for $k\ge j$.
\end{lemma}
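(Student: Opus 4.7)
The plan is to exploit the support and cancellation properties of $\psi$ recorded in \S\ref{LPsect}, together with the piecewise-constant structure of $h_{j,\mu}$. By the local means characterization we may assume $\psi$ is compactly supported, and after rescaling we may take $\supp\psi\subseteq[-1,1]$ (so that $\supp\psi_k\subseteq[-2^{-k},2^{-k}]$); this is the role of the normalization in \eqref{psicanccond}, from which in particular $\int\psi=0$.

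For part (i), fix $x$ satisfying the stated distance condition. Since $\supp\psi_k(x-\cdot)\subseteq[x-2^{-k},x+2^{-k}]$ and this interval contains none of the three ``jump points'' $2^{-j}\mu$, $2^{-j}(\mu+\tfrac12)$, $2^{-j}(\mu+1)$ of $h_{j,\mu}$, the function $h_{j,\mu}$ is constant on the support of $\psi_k(x-\cdot)$. Writing $c\in\{-1,0,+1\}$ for this constant value,
\[
h_{j,\mu}*\psi_k(x)=c\int\psi_k(x-y)\,dy=c\int\psi_k(z)\,dz=0,
\]
using the cancellation $\int\psi=0$.

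For part (ii), the conclusion of part (i) implies that $\supp(h_{j,\mu}*\psi_k)$ is contained in the union of three intervals of length $O(2^{-k})$ centered at the jump points of $h_{j,\mu}$; in particular its Lebesgue measure is $O(2^{-k})$. Combining this with the trivial pointwise bound
\[
\|h_{j,\mu}*\psi_k\|_\infty\le\|h_{j,\mu}\|_\infty\|\psi_k\|_1=\|\psi\|_1\lesssim 1,
\]
H\"older's inequality yields $\|h_{j,\mu}*\psi_k\|_p\lesssim 2^{-k/p}$ for every $0<p\le\infty$ (with the usual convention $2^{-k/\infty}=1$).

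The only subtlety here is making sure that the normalization of $\psi$ gives a support radius consistent with the constant $2^{-k}$ appearing in the distance hypothesis of (i); this is a harmless choice of admissible $\psi$ in the local means framework of \S\ref{LPsect}, and the rest is direct.
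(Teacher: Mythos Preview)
Your proof is correct and follows essentially the same approach as the paper's own proof: part (i) uses the support and cancellation of $\psi_k$ together with the fact that $h_{j,\mu}$ is constant on each of $I_{j,\mu}^+$, $I_{j,\mu}^-$, and $I_{j,\mu}^\complement$; part (ii) combines the $L_\infty$ bound from $\|\psi_k\|_1\lesssim 1$ with the support information from (i). The paper states this more tersely, but the content is identical.
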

\begin{proof}
(i) follows by the support and cancellation properties of $\psi_k$ and
fact that $h_{j,\mu}$ is constant on
$I^+_{j,\mu}$,  $I^-_{j,\mu}$, and  $I^\complement_{j,\mu}$. Since
$\|\psi_k\|_1\lc 1$ we have (ii) for $p=\infty$. From (i) we then get
(ii) for all $p$.
\end{proof}

\subsection*{\it Basic reductions}

We  use the Peetre type maximal operators $\fM_n^r$ defined in
\eqref{peetremod};
it will be convenient to use the notation $\cM^k_n =\fM^{2^{k}}_n$ so that
\Be\label{maxopabbr}
\cM^k_n g(x) =\sup_{|y|\le 2^{-k+n+2}}|g(x+y)|\,.
\Ee

%define $\fK$ as the set of integers $j$ for which there exists a $\mu$ with $h_{j,\mu}\in E$. We also let $E_j:=\{\mu: h_{j,\mu}\in E\}$.

%Define $T_{k,l} f(x)= \psi_k * P_E [L_l f]$.
In  the remainder of the chapter we assume  $1<q<\infty$,  and  $E$ will denote  a subset of $\sH$ satisfying
\Be\label{sizeassumption} \#(\text{HF}(E))< 2^{N+1}.\Ee
Let $\psi_k$ be  as in
\S\ref{testfctsect}.   Theorem  \ref{upperlabounds} follows from
 $$
\Big\|\Big(\sum_{k\in \bbN} 2^{ksq}|\psi_k *P_E f|^q\Big)^{1/q}\Big\|_p
\lc \max \{2^{N(\frac 1q-s-1)}, N^{1-1/q}Z^{1/q}\} \|f\|_{F^s_{p,q}},
$$
with $Z:=\cZ_N(E)$.
This in turn  follows from
 \begin{multline} \label{upperLabd}
\Big\|\Big(\sum_{k\in \bbN} 2^{ksq}\big|\psi_k *P_E\big [{\textstyle \sum_l
2^{-ls} \psi_l* f_l }\big]\big|^q\Big)^{1/q}\Big\|_p\\
\lc \max \{2^{N(\frac 1q-s-1)}, N^{1-1/q} Z^{1/q}\} \Big\|
\Big(\sum_{l=0}^\infty |f_l|^q\Big)^{1/q}\Big\|_p
\end{multline}
for all $\{f_l\}$ with $f_l\in \cE(2^l)$.

Given a set $E$ of Haar functions,
we set $E_j=\{\mu: h_{j,\mu}\in E\}$.
%Let $N$ be large and let $E$ such that  $\#(\HF(E)\le 2^N$.
We  link $j=k+m$ and $l=k+m+n$ and
define, for $m,n\in \bbZ$, $k=0,1,\dots$,
\begin{multline} \label{Tkmndef}
T^k_{m,n} f = \sum_{\mu\in E_{k+m}}
 2^{k+m}\inn{\psi_{k+m+n}* f}{h_{k+m,\mu}} \psi_k*
h_{k+m,\mu}\,
,\\
\text{ if $k+m\in \HF(E)$, $k+m+n\ge 0$,}
\end{multline}
and $T^k_{m,n} =0$ if  $k+m\notin \HF(E)$ or
$k+m+n< 0$.
Then
\Be \label{Tkmnexpansion}
2^{ks}\, \psi_k *P_E\big [{\textstyle \sum_l
2^{-ls} \psi_l* f_l }\big]= \sum_m\sum_n 2^{-s(m+n)}T^k_{m,n} f_{k+m+n}.
\Ee
%and $f_{k+m+n}$ can be replaced with $U_{k+m+n} f_{k+m+n}$ if $f_l\in \cE(2^l)$.

In preparation for the proof of \eqref{upperLabd} we first state estimates of $T^k_{m,n} f$ in terms of the Peetre type maximal operators $\cM^k_n$
\eqref{maxopabbr}, or in some cases
just  the Hardy-Littlewood maximal operator $M$.

\begin{lemma} \label{Tkmnptwise}  Let $k\ge 0$, $k+m\ge 0$.
The following estimates hold for continuous $f$.

(i) For $m\ge 0$ and $n\ge 0$,
$$ |T^k_{m,n} f(x)|\lc 2^{-m-n}
\cM_{n+m}^{k+n+m}f(x)\,.
$$

(ii)  For $m\ge 0$ and $n\le 0$,
$$ |T^k_{m,n} f(x)|\lc \begin{cases}  2^{n-m}\cM_{m+n}^{k+m+n}f(x)
\text{ if $n\ge -m$}
\\
2^{n-m} Mf(x) \text{ if $n< -m$}\,.
\end{cases}
$$

(iii) For $m\le 0$ and $n\ge 0$,
$$ |T^k_{m,n} f(x)|\lc
  2^{-n}\cM_{n}^{k+n+m}f(x)\,.
$$

(iv)  For $m\le 0$ and $n\le 0$,
$$ |T^k_{m,n} f(x)|\lc  Mf(x)\,.
$$
\end{lemma}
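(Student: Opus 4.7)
The proof is a case analysis on the signs of $m$ and $n$, with Lemmas \ref{psicanc} and \ref{canc-const} supplying the basic convolution estimates. Writing $j=k+m$, $l=k+m+n$, and $\tilde\psi_l(y):=\psi_l(-y)$ (which inherits the compact support, scaling, and cancellation of $\psi_l$), the starting point is the identity
\[
T^k_{m,n}f(x) = \sum_{\mu\in E_j} 2^j (\psi_k*h_{j,\mu})(x)\int f(x')(\tilde\psi_l*h_{j,\mu})(x')\,dx'.
\]
Both convolutions have the form $\varphi*h_{j,\mu}$ with a Schwartz factor: $\psi_k*h_{j,\mu}$ is controlled by Lemma \ref{psicanc} when $m\ge 0$ (i.e.\ $k\le j$) and by Lemma \ref{canc-const} when $m\le 0$, and analogously $\tilde\psi_l*h_{j,\mu}$ is controlled according to the sign of $n$. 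A $2^{-j}$-separation count of the centers $y_{j,\mu}$ (or of the endpoints of $I_{j,\mu}$) shows that for each $x$, only $O(2^m)$ values of $\mu\in E_j$ (if $m\ge 0$) or $O(1)$ values (if $m\le 0$) satisfy $\psi_k*h_{j,\mu}(x)\ne 0$.

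For each such $\mu$ I bound
\[
|\inn{\psi_l*f}{h_{j,\mu}}|\le \|\tilde\psi_l*h_{j,\mu}\|_\infty \int_{S_\mu}|f(x')|\,dx',
\]
where $S_\mu$ is the support of $\tilde\psi_l*h_{j,\mu}$. Inspecting the two supports gives $|x'-x|\le C\max(2^{-k},2^{-l})$ whenever $\mu$ contributes at $x$ and $x'\in S_\mu$. In cases (i), (ii) with $n\ge -m$, and (iii), this maximum is dominated by the Peetre radius $2^{-l+(n+m)+2}=2^{-k+2}$ (in (i) and (ii-a)) or $2^{-l+n+2}=2^{-j+2}$ (in (iii)), so $\sup_{S_\mu}|f|$ is bounded by the appropriate maximal function $\cM^l_{n+m}f(x)$ or $\cM^l_n f(x)$, and hence $\int_{S_\mu}|f|\lc |S_\mu|\,\cM^l_{\cdot}f(x)$. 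In the remaining cases (ii) with $n<-m$, and (iv), one has $l<k$ and the radius exceeds the Peetre scale; there I use the elementary one-dimensional inequality $\int_{|x'-x|\le r}|f|\,dx'\le 2r\,Mf(x)$ to obtain $\int_{S_\mu}|f|\lc |S_\mu|\,Mf(x)$.

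Multiplying the pointwise bounds on $|\psi_k*h_{j,\mu}(x)|$, the estimate of $|\inn{\psi_l*f}{h_{j,\mu}}|$, the factor $2^j$, and the $\mu$-count, and simplifying the resulting exponent of $2$ via $j=k+m$ and $l=k+m+n$, yields the four asserted estimates. The main obstacle I anticipate is the algebraic bookkeeping: verifying in each subcase that these factors collapse to precisely $2^{-m-n}$, $2^{n-m}$, $2^{-n}$, or $1$.
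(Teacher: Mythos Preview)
Your approach matches the paper's essentially line for line: rewrite the inner product via $\tilde\psi_l*h_{j,\mu}$, apply Lemma~\ref{psicanc} or Lemma~\ref{canc-const} to each convolution according to the sign of $m$ and $n$, count the active $\mu$'s, and convert the integral $\int_{S_\mu}|f|$ into a Peetre or Hardy--Littlewood maximal value. The bookkeeping you anticipate is routine and collapses exactly as claimed.

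One small correction in case (iii): your blanket assertion ``$|x'-x|\le C\max(2^{-k},2^{-l})$'' is not valid there. When $m\le 0$ and $n\ge 0$ one has $k,l\ge j$, and by Lemma~\ref{canc-const} both $\psi_k*h_{j,\mu}$ and $\tilde\psi_l*h_{j,\mu}$ are supported in three short pieces spread across the full interval $I_{j,\mu}$; thus $x$ may sit near one endpoint and $x'$ near the other, giving $|x-x'|\approx 2^{-j}$, which can be much larger than $\max(2^{-k},2^{-l})$. The correct support bound in this case is $|x-x'|\lesssim 2^{-j}$, and since the Peetre radius you (correctly) identify is $2^{-l+n+2}=2^{-j+2}$, the conclusion is unaffected. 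The paper handles this by simply writing $\sup_{|x-y|\le 2^{-k-m+2}}|f(y)|$ directly in case (iii).
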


\begin{proof} Let $m\ge 0$, $n\ge 0$.
 For $x\in \supp (\psi_k* h_{k+m,\mu})$ we have (with $\widetilde \psi:=\psi(-\cdot)$)
\begin{align*}
&2^{k+m}|\inn{\psi_{k+m+n}* f}{h_{k+m,\mu}}|=
2^{k+m}|\inn{f}{\widetilde \psi_{k+m+n}*h_{k+m,\mu}}|
\\&\lc 2^{k+m} 2^{-k-m-n} \sup_{y: |x-y| \le 2^{2-k} }|f(y)|
\\&\lc 2^{-n} \cM_{m+n}^{k+m+n} f(x).
\end{align*}
Now $\psi_k* h_{k+m,\mu}=O(2^{-2m})$ and for fixed $x$ the $\mu$-sum
in \eqref{Tkmndef} contributes $O(2^m)$ terms. This yields (i).

%Consider the case 
Let  $m\ge 0$, $-m\le n\le 0$. We now have
$\|\widetilde \psi_{k+m+n}*h_{k+m,\mu}\|_\infty\lc 2^{2n}$, by Lemma \ref{psicanc}
and therefore
\begin{align*}
&2^{k+m}|\inn{\psi_{k+m+n}* f}{h_{k+m,\mu}}|=
2^{k+m}|\inn{f}{\widetilde \psi_{k+m+n}*h_{k+m,\mu}}|
\\&\lc 2^{k+m} 2^{2n} 2^{-k-m-n} \sup_{y: |x-y| \le 2^{2-k} }|f(y)|
\\&\lc 2^{n} \cM_{m+n}^{k+m+n} f(x).
\end{align*}
As in the previous case
$\psi_k* h_{k+m,\mu}=O(2^{-2m})$ and there are $O(2^m)$ $\mu$-terms that contribute. This leads to  (ii) in the case when $m+n\ge 0$.
If $m\ge 0$ and  $n\le -m$ then we have instead
\begin{multline*}
2^{k+m}|\inn{f}{\widetilde \psi_{k+m+n}*h_{k+m,\mu}}|
\\
\lc
2^{k+m} 2^{2n}\int_{|x-y|\le 2^{-k-m-n+2}}|f(y)|dy \lc 2^n Mf(x),
\end{multline*}
which gives the second estimate in (ii).

Next assume $m\le 0$. Now we  use that $h_{k+m,\mu}*\widetilde \psi_{k+m+n}$
is supported in the union of three intervals of length $2^{-k-m-n}$ centered at
the endpoints and the middle point of $\supp(h_{k+m,\mu})$.
Thus, for $x\in \supp(\psi_k*h_{k+m,\mu})$,
\begin{multline*}
2^{k+m}|\inn{f}{\widetilde \psi_{k+m+n}*h_{k+m,\mu}}|
\\
\lc
2^{-n} \sup_{|x-y|\le 2^{-k-m+2}}|f(x-y)|
 \lc 2^{-n} \cM_n^{k+m+n}f(x)
\end{multline*}
and (iii) follows since every $x$ is contained in
$\supp(\psi_k*h_{k+m,\mu})$
for at  most three choices of $\mu$.

When $m\le 0$, $n\le 0$ we can estimate instead
$$2^{k+m}|\inn{\psi_{k+m+n}* f}{h_{k+m,\mu}}| \lc Mf(x), $$ for  $x\in \supp(\psi_k*h_{k+m,\mu})$ and obtain (iv).
 \end{proof}

By the Peetre type inequality \eqref{peetremodineq} we see that for $m\ge 0$ the $L_\rho\to L_\rho$ operator norm  of  $T^k_{m,n}$ when acting on functions in $\cE(C2^{k+m+n})$,  is
$O(2^{-(m+n)/\rho'})$ if $n\ge 0$ and
$O(2^{2n} 2^{-(m+n)/\rho'})$ when $n\le 0$. It will be useful to  observe an improvement  in $m$ which we will apply for $\rho=p$ and $\rho=q$.

\begin{lemma}\label{Tmnq} Let $m,k\ge 0$.
Then for $1\le \rho\le \infty$ and $f\in L_\rho$,
%\begin{align*}
$$
\|T^k_{m,n} f\|_\rho \lc \begin{cases}
2^{-n/\rho'}2^{-m} \|f\|_\rho\,, &n\ge 0,
\\
2^{n-m} \|f\|_\rho\,, &n\le 0,
\end{cases}
$$
where the implicit constant depends only on $\rho$.
\end{lemma}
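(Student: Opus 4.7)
The plan is to expand
\[
T^k_{m,n}f(x)=\sum_{\mu\in E_j}a_\mu\,(\psi_k*h_{j,\mu})(x),\quad j:=k+m,\ l:=k+m+n,
\]
with $a_\mu:=2^j\inn{\psi_l*f}{h_{j,\mu}}=2^j\inn{f}{\widetilde\psi_l*h_{j,\mu}}$, and then to combine two sets of geometric estimates. Since $m\geq 0$ we have $k\leq j$, so Lemma~\ref{psicanc} gives $\supp(\psi_k*h_{j,\mu})\subset [y_{j,\mu}-2^{-k},y_{j,\mu}+2^{-k}]$ and $\|\psi_k*h_{j,\mu}\|_\infty\lc 2^{-2m}$. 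Since the centers $y_{j,\mu}$ are $2^{-j}$-separated, $E_j$ can be partitioned into $R\lc 2^m$ subsets $S_1,\dots,S_R$ on each of which the supports of $\psi_k*h_{j,\mu}$ are pairwise disjoint. Simultaneously, the dual test functions $\widetilde\psi_l*h_{j,\mu}$ satisfy: for $n\geq 0$, Lemma~\ref{canc-const} gives support in a set $\Omega_\mu$ consisting of three intervals of length $\lc 2^{-l}$ located at the endpoints and midpoint of $I_{j,\mu}$, with $\|\widetilde\psi_l*h_{j,\mu}\|_{\rho'}\lc 2^{-l/\rho'}$; for $n\leq 0$, Lemma~\ref{psicanc} gives support in a single interval $\Omega_\mu$ of length $\lc 2^{-l}$ about $y_{j,\mu}$, with $\|\widetilde\psi_l*h_{j,\mu}\|_\infty\lc 2^{2n}$.

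For $1\leq\rho<\infty$, disjointness within each $S_i$ combined with the sup/support bounds for $\psi_k*h_{j,\mu}$ yields
\[
\Big\|\sum_{\mu\in S_i}a_\mu(\psi_k*h_{j,\mu})\Big\|_\rho
\lc 2^{-2m}\,2^{-k/\rho}\Big(\sum_{\mu\in S_i}|a_\mu|^\rho\Big)^{1/\rho},
\]
and summing over the $\lc 2^m$ subsets via H\"older on the counting measure introduces a factor $2^{m/\rho'}$. This produces
\[
\|T^k_{m,n}f\|_\rho\lc 2^{-m-j/\rho}\Big(\sum_{\mu\in E_j}|a_\mu|^\rho\Big)^{1/\rho}.
\]
The case $\rho=\infty$ is obtained directly from $\|\psi_k*h_{j,\mu}\|_\infty\lc 2^{-2m}$ together with the $\lc 2^m$ overlap count.

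The sum $\sum_\mu|a_\mu|^\rho$ is estimated via H\"older in the inner-product representation: $|a_\mu|\leq 2^j\|\widetilde\psi_l*h_{j,\mu}\|_{\rho'}\|f\bbone_{\Omega_\mu}\|_\rho$. For $n\geq 0$, the sets $\Omega_\mu$ (three pieces of length $\lc 2^{-l}\leq 2^{-j}$ on a $2^{-j}$-spaced grid) have bounded overlap, so $\sum_\mu\|f\bbone_{\Omega_\mu}\|_\rho^\rho\lc\|f\|_\rho^\rho$; combining exponents gives $\|T^k_{m,n}f\|_\rho\lc 2^{-m-n/\rho'}\|f\|_\rho$. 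For $n\leq 0$ the $\Omega_\mu$ overlap with multiplicity $\lc 2^{j-l}=2^{-n}$; combining $\|\widetilde\psi_l*h_{j,\mu}\|_{\rho'}\lc 2^{2n-l/\rho'}$ with $\sum_\mu\|f\bbone_{\Omega_\mu}\|_\rho^\rho\lc 2^{-n}\|f\|_\rho^\rho$ produces $\|T^k_{m,n}f\|_\rho\lc 2^{n-m}\|f\|_\rho$.

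The principal obstacle is the extraction of the $2^{-m}$ improvement over the trivial estimate $\|T^k_{m,n}f\|_\rho\lc\|f\|_\rho$ (available from $\|\psi_k\|_1\lc 1$ and the $L^\rho$-boundedness of single-level Haar projections). This gain is the quantitative cancellation of $\psi_k$ at the coarser scale $k\leq j$, which makes $\|\psi_k*h_{j,\mu}\|_\infty\lc 2^{-2m}$; the disjoint-subset decomposition is precisely what guarantees that the $2^m$-fold multiplicity in the $\mu$-index leaves intact the factor $2^{-m}$.
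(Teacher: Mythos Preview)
Your proof is correct and follows essentially the same approach as the paper: both arguments exploit $\|\psi_k*h_{j,\mu}\|_\infty\lc 2^{-2m}$ together with the $O(2^m)$ overlap on the output side (costing only $2^{m/\rho'}$ via H\"older) and the bounded (resp.\ $O(2^{-n})$) overlap of the dual supports $\Omega_\mu$ on the input side. The only difference is organizational---the paper localizes via a spatial grid $\{J_{k,\nu}\}$ of intervals of length $2^{-k}$ and groups $\mu\sim\nu$, whereas you partition $E_j$ into $\lc 2^m$ disjoint-support color classes $S_i$; these are equivalent bookkeeping devices for the same H\"older/overlap computation.
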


\begin{proof}
We have $\|\psi_k*h_{k+m,\mu}\|_\infty=O(2^{-2m})$,
by  the cancellation property  of $h_{k+m,\mu}$.
We decompose $\bbR$ into dyadic intervals of length $2^{-k}$, labeled $J_{k,\nu}$ for $\nu\in \bbZ$. We say that $\mu\sim \nu$ if $\mu\in E_{k+m}$ and $I_{k+m,\mu}$ intersects $J_{k,\nu}$ or one of its neighbors.

We first examine the case $n\ge 0$. Now
$\psi_{k+n+m}*h_{k+m,\mu}$  is supported on a set $V^{k+m,n}_\mu$ of measure $O(2^{-k-m-n})$,  namely the union of three intervals of length $2^{-k-m-n+1}$ centered at the two endpoints and the midpoint of the interval $I_{k+m,\mu}$.
% points $2^{-k-m}\mu$, $2^{-k-m}(\mu+\frac 12)$, $2^{-k-m}(\mu+1)$.

Thus
\begin{align*}
&\|T^k_{m,n} f\|_\rho\\&\lc \Big(\sum_\nu\int_{J_{k,\nu}}\Big[\sum_{\mu:\mu\sim\nu}
|\psi_k*h_{k+m,\mu}(x)|2^{k+m}
|\inn{h_{k+m,\mu}*\psi_{k+m+n}}{f}|\Big]^\rho dx\Big)^{1/\rho}
\\
&\lc 2^{-k/\rho} 2^{-2m} 2^{k+m}
\Big(\sum_\nu\Big[\sum_{\mu:\mu\sim\nu}\int_{V^{k+m,n}_\mu}|f(y)|dy \Big]^\rho\Big)^{1/\rho}
\\
&\lc 2^{k/\rho'}2^{-m}
\Big(\sum_\nu\sum_{\mu:\mu\sim\nu}\int_{V^{k+m,n}_\mu}|f(y)|^\rho dy\, 2^{-(k+m+n)\rho/\rho'}
2^{m\rho/\rho'}
\Big)^{1/\rho}
\end{align*}
since the measure of  $\meas(V^{k+m,n}_\mu)=O(2^{-k-m-n})$ and since
for each $\nu$ there are at most $O(2^m)$ integers $\mu$ with $\mu\sim \nu$.
Now each $y$ is contained in a bounded number of the sets
$V^{k+m,n}_\mu$ and for each $\mu$ the number of $\nu$ with $\mu\sim\nu$ is
also bounded. Hence the expression on the last displayed line is dominated by a
constant times
\begin{equation*}
2^{-m}  2^{-n/\rho'}
\Big(\sum_\nu\sum_{\mu:\mu\sim\nu}\int_{V^{k+m,n}_\mu}|f(y)|^\rho dy \Big)^{1/\rho}
\lc 2^{-m}  2^{-n/\rho'} \|f\|_\rho\,.
%\qedhere
\end{equation*}
which proves the assertion for $n\ge 0$.

For the case $n\le 0$ we
have  $\|\psi_{k+m+n}*h_{k+m,\mu}\|_\infty=O(2^{2n})$.
The function is supported in an interval $I_{k+m,\mu}^n$ of length
$C2^{-k-m-n}$, centered at an $x_{k+m,\mu}$ in the support of $h_{k+m,\mu}$.
We  estimate
\begin{align*}
& \Big(\sum_\nu\int_{J_{k,\nu}}\Big[\sum_{\mu:\mu\sim\nu}
|\psi_k*h_{k+m,\mu}(x)|2^{k+m}
|\inn{h_{k+m,\mu}*\psi_{k+m+n}}{f}|\Big]^\rho dx\Big)^{1/\rho}
\\
&\lc 2^{-k/\rho} 2^{-2m} 2^{k+m}2^{2n}
\Big(\sum_\nu\Big[\sum_{\mu:\mu\sim\nu}\int_{I_{k+m,\mu}^n}|f(y)|dy \Big]^\rho\Big)^{1/\rho}
\\
&\lc 2^{k/\rho'} 2^{-m} 2^{2n}
\Big(\sum_\nu2^{m\rho/\rho'}\sum_{\mu:\mu\sim\nu}
2^{-(k+m+n)\rho/\rho'}\int_{I_{k+m,\mu}^n}|f(y)|^\rho dy \Big)^{1/\rho}.
\end{align*}
We now interchange summations and integration and  observe that for each $\mu$
there are only $O(1)$ values of $\mu$ with $\mu\sim\nu$.
 This leads to
\begin{align*}\|T^k_{m,n} f\|_\rho\lc2^{-m}2^{2n-n/\rho'}
\Big(\int|f(y)|^\rho\,\#\{\mu:y\in I^n_{k+m,\nu}\} dy\Big)^{1/\rho}
\end{align*}
and since  for each  $y$ there are
at most  $O(2^{-n})$ values of $\mu$ with
$y\in I^n_{k+m,\mu}$ the asserted inequality for $n\le 0$ follows.
\end{proof}

In what follows we use operators $U_k$ defined by $\widehat {U_kf }(\xi)=
\Phi (2^{-k} \xi)\widehat f(\xi) $ where $\Phi\in C^\infty_c(\bbR)$ supported in
$(-4,4)$ satisfying  $\Phi(\xi)=1 $ for $|\xi|\le 2$.  Notice that $U_k
f_k=f_k$ for $f_k\in \cE(2^k)$. In order to facilitate interpolation we
shall replace $f_{k+m+n}$ on the right hand side by $U_{k+m+n} g_{k+m+n}$
where
$\vec g=\{g_l\}_{l=0}^\infty$ is an arbitrary function in $L_p(\ell_q)$.

The main inequalities needed to prove
Theorem  \ref{upperlabounds} for the case $s< -1/q'$
are  stated in the following proposition
(which also provides useful information for the case $s=-1/q'$).
Recall that $\#\HF(E)\le 2^{N+1}$.
\begin{proposition} \label{Tkmn-seq}
 Let $1<q<p<\infty$,  and  $\eps>0$.

(i)  For $m\ge 0$ and $n\ge 0$,
\begin{multline*}
\Big\|\Big(\sum_k |
T^k_{m,n}  U_{k+m+n}g_{k+n+m}|^q\Big)^{1/q} \Big\|_p
\\ \lc_\eps
 \min\{  2^{-\frac{n}{q'}}  2^{-m(1-\frac{1}{q}+\frac 1p+\eps)} ,\,
2^{N(\frac 1q-\frac 1p)}2^{-\frac n{p'}}2^{-m}   \} \|\vec g\|_{L_p(\ell_q)}\,.
%\Big\|\Big(\sum_l |f_l|^q\Big)^{1/q} \Big\|_p
\end{multline*}

(ii)  For $m\ge 0$ and $-m\le n\le 0$,
\begin{multline*}
\Big\|\Big(\sum_k |T^k_{m,n} U_{k+m+n}g_{k+m+n}|^q\Big)^{1/q} \Big\|_p\\
\lc 2^{n-m} \min\{ 2^{(m+n)(\frac 1q-\frac 1p+\eps)}, 2^{N(\frac 1q-\frac 1p)}\}
 \|\vec g\|_{L_p(\ell_q)}\,.
%\Big\|\Big(\sum_k |g_k|^q\Big)^{1/q} \Big\|_p
\end{multline*}

(iii) For $m\ge 0$ and $n\le -m\le 0$,
\[
\Big\|\Big(\sum_k |T^k_{m,n} U_{k+m+n}g_{k+n+m}|^q\Big)^{1/q} \Big\|_p
\lc 2^{n-m}
 \|\vec g\|_{L_p(\ell_q)}\,.
%\Big\|\Big(\sum_k |g_k|^q\Big)^{1/q} \Big\|_p
\]

(iv) For $m\le 0$ and $n\ge 0$,
\begin{multline*}
\Big\|\Big(\sum_k |T^k_{m,n} U_{k+m+n}g_{k+m+n}|^q\Big)^{1/q} \Big\|_p\\
\lc  \min\{ 2^{-n (1-\frac 1q)}, 2^{N(\frac 1q-\frac 1p)}2^{-n(1-\frac 1p)}\}
\|\vec g\|_{L_p(\ell_q)}\,.
%\Big\|\Big(\sum_k |g_k|^q\Big)^{1/q} \Big\|_p
\end{multline*}

(v) For $m\le 0$ and $n\le 0$,
\[
\Big\|\Big(\sum_k |T^k_{m,n} U_{k+m+n}g_{k+m+n}|^q\Big)^{1/q} \Big\|_p
\lc  \|\vec g\|_{L_p(\ell_q)}\,.
%\Big\|\Big(\sum_k |g_k|^q\Big)^{1/q} \Big\|_p
\]
\end{proposition}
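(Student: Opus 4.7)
The plan is to treat cases~(i)--(v) by combining two complementary routes for bounding the $L_p(\ell_q)$ norm of $\{T^k_{m,n}U_{k+m+n}g_{k+m+n}\}_k$. The first, $N$-independent route starts from the pointwise bounds of Lemma~\ref{Tkmnptwise}, which dominate $|T^k_{m,n}f|$ by $c_{m,n}\cM^{k+m+n}_\eta f$ or $c_{m,n}Mf$ for explicit constants $c_{m,n}$ and exponents~$\eta$. Applying Peetre's inequality~\eqref{peetremodineq} (whose constant reduces to $2^{\eta/q}$ because $q\le p$) or equivalently Proposition~\ref{vectpeetremodprop}, one obtains
\[
\Big\|\Big(\sum_k|T^k_{m,n}U_{k+m+n}g_{k+m+n}|^q\Big)^{1/q}\Big\|_p\lesssim c_{m,n}\cdot 2^{\eta/q}\|\vec g\|_{L_p(\ell_q)},
\]
while the Fefferman--Stein vector maximal inequality handles the $Mf$-dominated pieces. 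This delivers the first bound in~(iii), the bound in~(v), the $2^{n-m}$-prefactored bound in~(iii) when $n\le -m$, and the Peetre-type bound $2^{-(m+n)/q'}$ in cases~(i)--(ii).

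The second, sparse route uses the scalar $L_p\to L_p$ estimate of Lemma~\ref{Tmnq} (and an analogous $L_p$ bound for $m\le 0$ read off from the support and size of $\psi_k\ast h_{k+m,\mu}$). Since $T^k_{m,n}\ne 0$ forces $k+m\in\log_2\HF(E)$, at most $2^{N+1}$ values of~$k$ contribute. Minkowski's inequality ($q\le p$), H\"older in~$k$ with exponents $p/q$ and $(p/q)'$, and the pointwise embedding $\ell^p\hookrightarrow\ell^q$ valid for $q\le p$ combine to give
\[
\Big\|\Big(\sum_k|T^k_{m,n}U_{k+m+n}g_{k+m+n}|^q\Big)^{1/q}\Big\|_p\lesssim 2^{N(1/q-1/p)}\sup_k\|T^k_{m,n}\|_{L_p\to L_p}\cdot\|\vec g\|_{L_p(\ell_q)},
\]
which supplies the second bound in~(i),~(ii), and~(iv).

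The main obstacle is the refined first bound $2^{-n/q'}2^{-m(1-1/q+1/p+\eps)}$ in~(i), whose $m$-decay is strictly sharper than the Peetre-route exponent $2^{-m/q'}$ by the factor $2^{-m(1/p+\eps)}$. I plan to obtain it by interpolating between the Peetre $L_p(\ell_q)\to L_p(\ell_q)$ bound (with $m$-decay $2^{-m/q'}$) and the scalar $L_p\to L_p$ bound of Lemma~\ref{Tmnq} (with $m$-decay $2^{-m}$), the latter read as an $L_p(\ell_p)\to L_p(\ell_p)$ operator-norm bound on the vector operator $\vec g\mapsto\{T^k_{m,n}g_k\}_k$. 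Working inside an analytic family of operators in which the Fourier localization enforced by $U_{k+m+n}$ is preserved, Stein's complex interpolation produces intermediate $L_p(\ell_{q_\theta})\to L_p(\ell_{q_\theta})$ bounds with $1/q_\theta=(1-\theta)/q+\theta/p$; choosing $\theta$ proportional to~$\eps$ yields the required $m$-decay $-(1-1/q+1/p+\eps)$ with an $n$-decay that is strictly stronger than $-1/q'$ and can then be weakened to match the statement. The $\eps$ absorbs both the interpolation loss and the passage from $L_p(\ell_{q_\theta})$ back to $L_p(\ell_q)$, which is possible because the outputs $\psi_k\ast(\cdot)$ have almost-disjoint frequency supports. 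The analogue in case~(ii) follows by combining the same interpolation with the prefactor $2^{n-m}$ from Lemma~\ref{Tkmnptwise}(ii); the remaining mixed-sign bounds are routine consequences of the two routes above.
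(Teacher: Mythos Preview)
Your treatment of the second (``sparse'') bounds via H\"older in $k$ and Lemma~\ref{Tmnq} is correct (though note the embedding you invoke is $\ell_q\hookrightarrow\ell_p$ for $q<p$, not the reverse), and the Peetre/Fefferman--Stein route for (iii), (iv), (v) and the $2^{-(m+n)/q'}$ bound is fine. The gap is in your interpolation for the sharp first bound in (i).

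You interpolate between the Peetre bound on $L_p(\ell_q)$ (decay $2^{-m/q'}$) and the scalar $L_p$ bound read as $L_p(\ell_p)\to L_p(\ell_p)$ (decay $2^{-m}$). Complex interpolation of these endpoints produces a bound for the operator on $L_p(\ell_{q_\theta})$ with $q<q_\theta<p$, not on $L_p(\ell_q)$. To hit the target $m$-decay $1/q'+1/p$ you need $\theta\approx q/p$, so $q_\theta$ is bounded away from $q$; your remark that $\theta$ can be chosen proportional to $\eps$ is not consistent with this. More seriously, there is no mechanism for passing from an $L_p(\ell_{q_\theta})$ bound on the output back to an $L_p(\ell_q)$ bound when $q_\theta>q$: the pointwise embedding goes the wrong way, and ``almost-disjoint frequency supports'' of the $\psi_k$-localized outputs do not give any equivalence of $\ell_q$ and $\ell_{q_\theta}$ square-function norms for $q\neq 2$. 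So the argument, as written, does not reach $L_p(\ell_q)$ with the improved decay.

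The fix is to interpolate in the \emph{outer} exponent while keeping $\ell_q$ fixed. At the endpoint $p=q$ one has $L_q(\ell_q)=\ell_q(L_q)$ by Fubini, so Lemma~\ref{Tmnq} with $\rho=q$ gives
\[
\Big\|\Big(\sum_k|T^k_{m,n}U_{k+m+n}g_{k+m+n}|^q\Big)^{1/q}\Big\|_q
\lesssim 2^{-m}2^{-n/q'}\|\vec g\|_{L_q(\ell_q)}.
\]
At a large endpoint $r\gg p$ the Peetre route (Lemma~\ref{Tkmnptwise}(i) and \eqref{peetremodineq}) gives $2^{-m/q'}2^{-n/q'}$ on $L_r(\ell_q)$. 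Interpolating $L_q(\ell_q)\leftrightarrow L_r(\ell_q)$ at $1/p=(1-\theta)/q+\theta/r$ yields the bound $2^{-n/q'}2^{-m(1-1/q+1/p-\eps)}$ on $L_p(\ell_q)$, with $\eps\to 0$ as $r\to\infty$. The same interpolation (using Lemma~\ref{Tmnq} for $\rho=q$ at the $L_q(\ell_q)$ endpoint) also gives the first bound in (ii); Peetre alone only yields $2^{n-m}2^{(m+n)/q}$ there, which is weaker than what is stated.
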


%\subsection*{Proof of Proposition \ref{Tkmn-seq}}

\begin{proof}
We prove (i) by interpolation.
We first observe that, by Lemma \ref{Tmnq},
%\Be \label{Tkmn-r=q}
\[
\Big\|\Big(\sum_k |T^k_{m,n} U_{k+m+n}g_{k+m+n}|^q\Big)^{1/q} \Big\|_q
 \lc 2^{-m} 2^{-n/q'}  \|\vec g\|_{L_q(\ell_q)}\,.
\]
By Lemma \ref{Tkmnptwise}, (i), and \eqref{peetremodineq},
\begin{align*}
&\Big\|\Big(\sum_k |T^k_{m,n} U_{k+m+n}g_{k+m+n}|^q\Big)^{1/q} \Big\|_r
\\
& \lc 2^{-m/q'} 2^{-n/q'} \Big\|\Big(\sum_k |U_{k+m+n}g_{k+m+n}|^q\Big)^{1/q} \Big\|_r
\notag
\\& \lc 2^{-m/q'} 2^{-n/q'} \|\vec g\|_{L_r(\ell_q)}
% \Big\|\Big(\sum_k |g_k|^q\Big)^{1/q} \Big\|_r
 \label{Tkmn-rlarge}
\end{align*}
which we choose for $r\gg p>q$ large.
Interpolation  yields
\Be\label{interpol}
\Big\|\Big(\sum_k |T^k_{m,n} U_{k+m+n}g_{k+m+n}|^q\Big)^{1/q} \Big\|_p
 \lc_\eps 2^{-m(1-\frac 1q+\frac 1p-\eps)} 2^{-n/q'}
 \|\vec g\|_{L_p(\ell_q)}
%\Big\|\Big(\sum_k |g_k|^q\Big)^{1/q} \Big\|_p
\Ee
%with $\eps(p,q,r)=r^{-1} (1/q-1/p)(1/q-1/r)^{-1}$.
with $\eps=\frac{1-q/p}{r-q}$.
Letting $r\to \infty$ we obtain the first bound stated in (i).
We also have by H\"older's inequality ($q<p$)  and $\#\HF(E)\le 2^N$
\begin{align}
&\Big\|\Big(\sum_k |T^k_{m,n} U_{k+m+n}g_{k+m+n}|^q\Big)^{1/q} \Big\|_p
\notag\\
&\lc 2^{N(\frac 1q-\frac 1p)}
\Big(\sum_k \|T^k_{m,n} U_{k+m+n}g_{k+m+n}\|_p^p\Big)^{1/p}
\notag
\\
&\lc 2^{N(\frac 1q-\frac 1p)}
  2^{-m} 2^{-n/p'} \|\vec g\|_{L_p(\ell_p)}
\notag
%\Big(\sum_k \|g_k\|_p^p\Big)^{1/p}\,.
\\
\label{HoelderboundN}&\lc 2^{N(\frac 1q-\frac 1p)}
  2^{-m} 2^{-n/p'} \|\vec g\|_{L_p(\ell_q)}\,.
\end{align}
Here, for the second inequality we have used Lemma \ref{Tmnq}, with $\rho=p$
and for the third the embedding $\ell_q\subset \ell_p$.
This concludes the proof of (i).

Inequalities (ii), (iii), (iv) follow more directly from the corresponding
statements in Lemma \ref{Tkmnptwise}, combined with an application of
\eqref{peetremodineq}.
\end{proof}

For the case $s=-1/q'$ we also need
\begin{proposition} \label{Tkmn-seqmod}
Let $1<q\leq p <\infty$ and $Z=Z_N(E)$ with $E$ as in \eqref{sizeassumption}.

(i) For $m\ge 0$,
\begin{multline*}\Big\|\Big(\sum_{k}\sum_{0\le n\le N} |2^{(m+n)/q'}T^k_{m,n} U_{k+m+n}g_{k+m+n}|^q\Big)^{1/q}\Big\|_p \\ \lc Z^{1/q}
2^{-m(\frac 1p-\eps)} \|\vec g\|_{L_p(\ell_q)}.
\end{multline*}

(ii) For $m\le 0$,
$$\Big\|\Big(\sum_{k}\sum_{0\le n\le N} |2^{n/q'}T^k_{m,n} U_{k+m+n}g_{k+m+n}|^q\Big)^{1/q}\Big\|_p \lc Z^{1/q} \|\vec g\|_{L_p(\ell_q)}.$$
\end{proposition}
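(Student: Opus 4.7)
The plan is to reindex the $(k,n)$-sum by $l := k+m+n$, so that the Fourier support of the input $U_l g_l$ depends only on one index and the Haar-frequency condition $k+m \in \HF(E)$ becomes a sparsity statement $\#(\HF(E)\cap [l-N,l]) \le Z$ per fixed $l$. Proposition~\ref{vectpeetremodprop} then applies directly, yielding (ii) and also an $L_r$-version of (i) for every $r \ge q$; the missing $m$-decay in (i) is recovered by interpolating the $L_r$-bound with a direct $L_q$-estimate obtained from Lemma~\ref{Tmnq}.

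For (ii), where $m \le 0$, the pointwise bound $|T^k_{m,n}U_l g_l| \lc 2^{-n}\fM^{2^l}_n U_l g_l$ of Lemma~\ref{Tkmnptwise}(iii) gives $|2^{n/q'}T^k_{m,n}U_l g_l|^q \lc 2^{-n}|\fM^{2^l}_n U_l g_l|^q$. I define $f_{l,n} := U_l g_l$ when $n \in [0,N]$ and $l-n \in \HF(E)$, and $f_{l,n} := 0$ otherwise, so that $f_{l,n} \in \cE(C2^l)$. Proposition~\ref{vectpeetremodprop} with $r_l := C2^l$ then produces
\[
\Big\|\Big(\sum_{k,n}|2^{n/q'}T^k_{m,n}U_l g_l|^q\Big)^{1/q}\Big\|_p \lc \Big\|\Big(\sum_{l,n}|f_{l,n}|^q\Big)^{1/q}\Big\|_p \le Z^{1/q}\Big\|\Big(\sum_l |U_l g_l|^q\Big)^{1/q}\Big\|_p \lc Z^{1/q}\|\vec g\|_{L_p(\ell_q)},
\]
where $\#\{n: f_{l,n}\ne 0\} = \#(\HF(E)\cap[l-N,l]) \le Z$ and the final step uses $|U_l g_l|\lc Mg_l$ together with Fefferman--Stein. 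Replacing Lemma~\ref{Tkmnptwise}(iii) by its analogue (i) and using $\widetilde n := m+n$ as the Peetre index, the identical argument yields for $m \ge 0$
\[
\Big\|\Big(\sum_{k,n}|2^{(m+n)/q'}T^k_{m,n}U_l g_l|^q\Big)^{1/q}\Big\|_r \lc Z^{1/q}\|\vec g\|_{L_r(\ell_q)}\qquad(r\ge q).
\]

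This $L_r$-bound carries no $m$-decay, so I combine it with the direct $L_q$-estimate from Lemma~\ref{Tmnq} at $\rho = q$: since $\|T^k_{m,n}f\|_q \lc 2^{-n/q'}2^{-m}\|f\|_q$, summing $q$-th powers over admissible $(k,n)$ and again applying $\#(\HF(E)\cap[l-N,l])\le Z$ produces $\|(\sum_{k,n}|2^{(m+n)/q'}T^k_{m,n}U_l g_l|^q)^{1/q}\|_q \lc 2^{-m/q}Z^{1/q}\|\vec g\|_{L_q(\ell_q)}$. Riesz--Thorin interpolation for $\ell_q$-valued functions between $L_q$ and $L_r$, with $1/p = (1-\theta)/q + \theta/r$, produces the $L_p$-constant $\lc Z^{1/q}2^{-m(1-\theta)/q}$, and since $(1-\theta)/q \to 1/p$ as $r \to \infty$, taking $r$ sufficiently large yields the claimed $Z^{1/q}2^{-m(1/p-\eps)}$ for any prescribed $\eps > 0$. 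The principal technical obstacle is the reindexing itself: the natural Peetre parameter in $T^k_{m,n}$ is $2^{k+m+n}$, which depends on both $k$ and $n$ and is therefore incompatible with the hypothesis of Proposition~\ref{vectpeetremodprop}; passing to $l = k+m+n$ both removes this obstruction and simultaneously exposes the $Z$-sparsity of the active Peetre indices for each fixed $l$, which is exactly the quantity $Z_N(E)$.
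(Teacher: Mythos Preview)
Your proof is correct and follows essentially the same strategy as the paper: for (ii) and for the large-$r$ endpoint in (i) you combine the pointwise bounds of Lemma~\ref{Tkmnptwise} with Proposition~\ref{vectpeetremodprop}, for the $L_q$ endpoint in (i) you use Lemma~\ref{Tmnq} with $\rho=q$, and then you interpolate. Your explicit reindexing $l=k+m+n$ is in fact a clarification over the paper's presentation: as written, Proposition~\ref{vectpeetremodprop} requires the Peetre radius $r_k$ to depend on the first index alone, whereas in the paper's application one encounters $\cM^{k+m+n}_{n+m}$; your change of variables to $(l,n)$ (resp.\ $(l,\tilde n)$ with $\tilde n=m+n$) makes the hypothesis $f_{l,\cdot}\in\cE(C2^l)$ transparent and simultaneously exposes the count $\#\{n:2^{l-n}\in\HF(E),\,0\le n\le N\}\le Z$ that both arguments need.
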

\begin{proof}
For the proof of (i) we interpolate between
\Be\label{q=pest}
\Big\|\Big(\sum_{k}\sum_{0\le n\le N} |2^{(m+n)/q'}T^k_{m,n} U_{k+m+n}g_{k+m+n}|^q\Big)^{1/q}\Big\|_q \lc Z^{1/q}2^{-m/q}
\|\vec g\|_{L_q(\ell_q)}
\Ee
and
\Be\label{largepest}
\Big\|\Big(\sum_{k}\sum_{0\le n\le N} |2^{(m+n)/q'}T^k_{m,n} U_{k+m+n}g_{k+m+n}|^q\Big)^{1/q}\Big\|_p \lc Z^{1/q}
\|\vec g\|_{L_p(\ell_q)}
\Ee
which we use for large $p$.

Recall $T^k_{m,n}=0$ if $2^{k+m}\notin\HF(E)$. To see \eqref{q=pest} we interchange summation and integration and use Lemma
\ref{Tmnq} for $\rho=q$ to estimate the left hand side by
\begin{align*}
&\Big(\sum_{k:2^{k+m}\in \HF(E)}
\sum_{0\le n\le N} [2^{-m/q}\|g_{k+m+n}\|_q]^q\Big)^{1/q}
\\&\lc 2^{-m/q}\Big(\sum_l\|g_l\|_q^q \#\{n: 0\le n\le N,\,\,2^{l-n}\in \HF(E)\}\Big)^{1/q}
\\
&\lc 2^{-m/q}Z^{1/q}\|\vec g\|_{L_q(\ell_q)}\,.
\end{align*}

To see \eqref{largepest} we use Proposition \ref{vectpeetremodprop}.
By Lemma \ref{Tkmnptwise}, (i), we have that the left hand side of \eqref{largepest} is dominated by
\begin{align*}
&\Big\|\Big(\sum_{k:2^{k+m}\in \HF(E)}\sum_{0\le n\le N} |2^{-(m+n)/q} \cM^{k+n+m}_{n+m} U_{k+n+m}g_{k+m+n}|^q\Big)^{1/q}\Big\|_p
\\
&\lc\Big\|\Big(\sum_{k:2^{k+m}\in \HF(E)}\sum_{0\le n\le N} |U_{k+n+m}g_{k+m+n}|^q\Big)^{1/q}\Big\|_p
\\
&\lc\Big\|\Big(\sum_{k:2^{k+m}\in \HF(E)}\sum_{0\le n\le N} |g_{k+m+n}|^q\Big)^{1/q}\Big\|_p
\\
&\lc\Big\| \Big(\sum_l |g_l|^q
\#\{n: 0\le n\le N,\,2^{l-n}\in \HF(E)\}\Big)^{1/q}
 \Big \|_p
%&\lc\Big(\sum_l\|g_l\|_q^q \#\{n: 2^{l-n}\in \HF(E)\}\Big)^{1/q}
\end{align*}
and  the last expression is $\lc Z^{1/q} \|\vec g\|_{L_p(\ell_q)}$.
This concludes the proof of  (i).

For the proof of (ii) we use Lemma \ref{Tkmnptwise}, (iii) and
again Proposition \ref{vectpeetremodprop} to
see that
\begin{align*}
&\Big\|\Big(\sum_{k}\sum_{0\le n\le N} |2^{n/q'}T^k_{m,n} U_{k+m+n}g_{k+m+n}|^q\Big)^{1/q}\Big\|_p
\\
&\lc \Big\|\Big(\sum_{k: 2^{k+m}\in \HF(E)}\sum_{0\le n\le N} |2^{-n/q}
\cM^{k+m+n}_nU_{k+m+n}g_{k+m+n}|^q\Big)^{1/q}\Big\|_p
\\
&\lc \Big\|\Big(\sum_{k: 2^{k+m}\in \HF(E)}\sum_{0\le n\le N} |g_{k+m+n}|^q\Big)^{1/q}\Big\|_p
\\&\lc Z^{1/q} \|\vec g\|_{L_p(\ell_q)} \,.\qedhere
\end{align*}
\end{proof}

\begin{proof}[Proof of Theorem \ref{upperlabounds}]
%, for $-1/p'<s<-1/q'$]

%By Minkowski's inequality,
%\begin{multline}\label{Mink-mn}
%\Big\|\Big(\sum_{k\in \bbN} 2^{ksq}\big|\psi_k *P_E\big [{\textstyle \sum_{l=0}^\infty 2^{-ls} \psi_l*f_l} ]\big|^q\Big)^{1/q}\Big\|_p\\
%\le \sum_{m} \sum_n 2^{-(m+n)s}\Big\|
%\Big(\sum_{\substack {k\ge0: k+m\in\HF(E)\\k\ge -m-n}} \big|T^k_{m,n} f_{k+m+n}]\big|^q\Big)^{1/q}\Big\|_p.
%\end{multline}

By the triangle inequality in $L_p(\ell_q)$ we have
\begin{multline}\label{Mink-mn}
\begin{aligned}
&\Big\|\Big(\sum_{k\in \bbN} 2^{ksq}\big|\psi_k *P_E\big [{\textstyle \sum_{l=0}^\infty 2^{-ls} \psi_l*f_l} ]\big|^q\Big)^{1/q}\Big\|_p\\
&\le
\sum_{m\ge 0} 2^{-sm} \big[ I_m+II_m+III_m+IV_m]
+\sum_{m<0}2^{-sm}\big[V_m+VI_m] \\ &\qquad + \sum_{m\ge 0} \sum_{n<
-m}2^{-s(n+m)}VII_{m,n}+\sum_{m<0}\sum_{n<0}2^{-s(n+m)}VII_{m,n}\,,
\end{aligned}
\end{multline}
where for $m\ge 0$,
\begin{subequations}\label{roman}
\begin{align}
I_m&= \Big\|
\Big(\sum_{\substack {k\ge0\\ k+m\in\HF(E)} }\big|\sum_{n>\max\{N-m,0\}}
2^{-sn}T^k_{m,n} f_{k+m+n}\big|^q\Big)^{1/q}\Big\|_p,
\\
II_m&= \Big\|
\Big(\sum_{\substack {k\ge0\\ k+m\in\HF(E)} }\big|\sum_{0\le n\le \max\{N-m,0\}}2^{-sn}T^k_{m,n} f_{k+m+n}\big|^q\Big)^{1/q}\Big\|_p,
\\
III_m&= \Big\|
\Big(\sum_{\substack {k\ge0\\ k+m\in\HF(E)}} \big|\sum_{\substack{n\le 0\\m+n\ge N}}2^{-sn}T^k_{m,n} f_{k+m+n}\big|^q\Big)^{1/q}\Big\|_p,
\\
IV_m&= \Big\|
\Big(\sum_{\substack {k\ge0\\ k+m\in\HF(E)} }\big|\sum_{\substack{n \le 0\\0\le
m+n\le N}}2^{-sn}T^k_{m,n} f_{k+m+n}\big|^q\Big)^{1/q}\Big\|_p,
\end{align}
and, for $m\le 0$,
\begin{align}
V_m&= \Big\|
\Big(\sum_{\substack {k\ge0\\ k+m\in\HF(E)} }\big|\sum_{n> N} 2^{-sn}T^k_{m,n} f_{k+m+n}\big|^q\Big)^{1/q}\Big\|_p,
\\
VI_m&= \Big\|
\Big(\sum_{\substack {k\ge0\\ k+m\in\HF(E)} }\big|\sum_{0\le n\le N} 2^{-sn}T^k_{m,n} f_{k+m+n}\big|^q\Big)^{1/q}\Big\|_p.
\end{align}
Moreover
\Be
VII_{m,n}=
 \Big\|\Big(\sum_{\substack {k\ge0\\ k+m\in\HF(E)} }\big|T^k_{m,n} f_{k+m+n}\big|^q\Big)^{1/q}\Big\|_p,  \quad n<\min\{-m,0\}.
\Ee
\end{subequations}
When $-1/p'<s<-1/q'$ we estimate the terms $I_m,\dots,VI_m$ by another use of
the triangle  inequality in $L_p(\ell_q)$, with respect to the  $n$ summation.
When
 $s=-1/q'$ we still do this for the terms $I_m$, $V_m$ but argue differently
for the terms involving the restriction $0\le n\le N$.
In what follows we shall need to distinguish the cases
$s<-1/q'$ and $s=-1/q'$ in various estimates and therefore write
$I(s)$, $II(s)$,...  for  the expressions $I$, $II$, ..., resp.

By Proposition \ref{Tkmn-seq}, (i)  we have
\begin{align*}
&\sum_{m\ge 0} 2^{-sm}I_m (s)\lc \big \|\vec f\big\|_{L_p(\ell_q)} \times
\\& 2^{N(\frac 1q-\frac 1p)}
\Big[
\sum_{m\ge N}2^{-(1+s)m}\sum_{n\ge 0}2^{-n(s+\frac 1{p'})}
+
\sum_{0\le m< N}2^{-(1+s)m}\sum_{n\ge N-m}2^{-n(s+\frac 1{p'})}\Big]
\end{align*} and the constant for $q\le p<\infty$  is easily seen to be
$O(2^{N(-s-1/q')})$.

Next we have $II_m(s)=0$ for $m>N$. For the terms with $0\le m\le N$, we get  again by
Proposition \ref{Tkmn-seq}, (i),
\begin{align*}
&\sum_{0\le m\le N} 2^{-sm} II_m (s)\\&\lc
\sum_{0\le m\le N} 2^{-m(s+1)} 2^{m(\frac 1q-\frac 1p-\eps)}\sum_{0\le n\le N-m}
2^{-n (s+\frac 1{q'})}
\big \|\vec f\big\|_{L_p(\ell_q)}\\&
\lc \max\{|s+1/q'| 2^{-N (s+\frac 1{q'})}, N \} \big \|\vec f\big\|_{L_p(\ell_q)}
\end{align*}
which contributes the desired bound for $-1/p'<s<-1/q'$.
For $s=-1/q'$ we use H\"older's inequality in the  $n$-sum followed by Proposition
\ref{Tkmn-seqmod} to get
\begin{align*}&\sum_{0\le m\le N} 2^{m/q'}II_m(-1/q')
\\&\lc \sum_{0\le m\le N} 2^{m/q'}N^{1/q'}  \Big\|
\Big(\sum_{\substack {k\ge0\\ k+m\in\HF(E)} }\sum_{\substack{0\le n \le\\ \max\{N-m,0\}}}\big|2^{n/q'}T^k_{m,n} f_{k+m+n}\big|^q\Big)^{1/q}\Big\|_p
\\
&\lc N^{1/q'}\sum_{m\ge 0} 2^{-m(\frac 1p-\eps)}  Z^{1/q} \|\vec f\|_{L_p(\ell_q)}\lc
N^{1/q'}Z^{1/q} \|\vec f\|_{L_p(\ell_q)}\,.
\end{align*}

For $-1/p'<s\le -1/q'$ we have by Proposition \ref{Tkmn-seq}, (ii),
\begin{align*}
\sum_{m\ge 0} 2^{-ms}III_m(s)&\le\sum_{m\ge N} \sum_{N-m\le n\le 0}2^{-(m+n)s}
2^{n-m} 2^{N(\frac 1q-\frac 1p)}\|\vec f\|_{L_p(\ell_q)}
\\&\lc 2^{-N/p} 2^{-N(s+1-1/q)} \|\vec f\|_{L_p(\ell_q)}\,.
\end{align*}

Similarly,
\begin{align*}
&\sum_{m\ge 0} 2^{-ms}IV_m(s)\\
&\lc\sum_{m\ge 0} \sum_{\substack{-m \le n\le \\ \min\{N-m,0\}}}
2^{-(m+n)s} 2^{n-m} 2^{(m+n)(\frac 1q-\frac 1p+\eps)} \|\vec f\|_{L_p(\ell_q)}
\\\,&
+\sum_{m\ge N}\sum_{N-m\le n\le 0} 2^{-(m+n)s} 2^{n-m} 2^{N(\frac 1q-\frac 1p)}
\|\vec f\|_{L_p(\ell_q)}
\end{align*}
with the implicit constant depending on $\eps>0$. Since $p<\infty$ we may choose
$0<\eps<1/p$.
We evaluate various geometric series and obtain for $s\le -1/q'$
%and using $s<1$
\[\sum_{m\ge 0} 2^{-ms}IV_m(s)\lc2^{-N(\frac 1p-\eps)}  2^{-N(s+1-\frac 1q)} \|\vec f\|_{L_p(\ell_q)}.\]

Next we consider the terms with $m\le 0$. We use the second estimate
in Proposition \ref{Tkmn-seq}, (iv),
and $s>-1/p'$ to obtain
\begin{align*}
\sum_{m\le 0}2^{-sm}V_m(s)
&\lc2^{N(\frac 1q-\frac 1p)}\sum_{m\le 0} 2^{-sm}\sum_{n\ge N} 2^{-n(s+1-\frac 1p)}
\|\vec f\|_{L_p(\ell_q)}
\\
&\lc 2^{N(-s-1+\frac 1q)} \|\vec f\|_{L_p(\ell_q)}\,.
\end{align*}
For the terms $VI_m(s)$  we
need to separately treat the case $s<-1/q'$ and $s=-1/q'$. For $s<-1/q'$ we
use the first  estimate in Proposition \ref{Tkmn-seq}, (iv) and
get
\begin{align*}
\sum_{m\le 0}2^{-sm}VI_m(s)
&\lc\sum_{m\le 0}2^{-sm} \sum_{0\le n\le N} 2^{n(-s-1+\frac 1q)}
\|\vec f\|_{L_p(\ell_q)}
\\
&\lc 2^{N(-s-1+\frac 1q)} \|\vec f\|_{L_p(\ell_q)}\,.
\end{align*}
For $s=-1/q'$ we argue as for the $II_m(-1/q')$ terms above and use H\"older's inequality in the  $n$ sum followed by Proposition
\ref{Tkmn-seqmod} (ii)to get
\begin{align*}&\sum_{m\le 0} 2^{m/q'}VI_m(-1/q')
\\&\lc \sum_{m\le 0} 2^{m/q'}N^{1/q'}  \Big\|
\Big(\sum_{\substack {k\ge0\\ k+m\in\HF(E)} }\sum_{0\le n\le N}\big|2^{n/q'}T^k_{m,n} f_{k+m+n}\big|^q\Big)^{1/q}\Big\|_p
\\
&\lc N^{1/q'}\sum_{m\le 0} 2^{m/q'} Z^{1/q} \|\vec f\|_{L_p(\ell_q)}\lc
N^{1/q'}Z^{1/q} \|\vec f\|_{L_p(\ell_q)}\,.
\end{align*}

Finally, the inequalities
%\begin{align*}&
\[\sum_{m\ge 0} \sum_{n<-m}
2^{-(m+n)s}VII_m(s)\lc\|\vec f\|_{L_p(\ell_q)}\] and
\[\sum_{m\le 0}\sum_{n\le 0}
2^{-(m+n)s}VII_m(s)\lc\|\vec f\|_{L_p(\ell_q)}\]
%\end{align*}
follow immediately from Proposition \ref{Tkmn-seq}, (iii), (v), resp.
\end{proof}

\section{Bounds for families of test functions}
\label{testfctsect}

It will be convenient to use characterizations of function spaces by compactly
supported localizations (i.e. the  local means in \cite{triebel2}), see \S
\ref{LPsect}. In what
follows let $M_0$, $M_1$ be positive integers, and we shall always assume that
$-M_0<s<M_1$.
%To prove the proposition we need the following  characterization  of
%$F^{p,q}_s$, this can be found, for example,  in \cite{triebel2},

Let $\psi_0$, $\psi$ be $C^\infty$ functions supported in $(-1/2,1/2)$ so that
$\widehat \psi_0(\xi) \neq 0$ for $|\xi|\le 1$ and so that $\psi(\xi)\neq 0$
for $1/4\le |\xi|\le 1$, moreover $\widehat \psi$ vanishes of order $M_1$ at
$0$. Thus the cancellation condition \eqref{psicanccond} holds.
%$\int\psi(y)\varpi(y) dy=0$ for all polynomials $\varpi$  of degree at most $M_1$.
Let $\psi_k=2^k\psi(2^k\cdot)$ for $k=1,2,\dots$.
We shall use the characterization of $F^s_{p,q}$ using the $\psi_k$, see
\eqref{localmeans}.
Now we will define some test functions which will be used to establish the
lower bounds in Theorem \ref{sharpGabd}. In what follows we fix an integer $m\ge
0$; all implicit constants will be independent of $m$.

Let $\eta$ be a $C^\infty$ function supported in $(-1/2,1/2)$ such that
$\int\eta(x) x^n dx=0$ for $n=1,\dots, M_0$. Let, for $l\ge m$,  $\cP_l^m$ be a
set of $2^{m-l}$-separated points in $[0,1]$. That is
$\cP_l^m=\{x_{l,1},\dots, x_{l,N(l)}\}$ with $N(l)\le  2^{l-m}$ and $x_{l,\nu}<
x_{l,\nu+1}$ with $x_{l,\nu+1}- x_{l,\nu}\ge 2^{m-l}$.
Define
\Be\label{etalnu} \eta_{l,\nu}=\eta(2^l(x-x_{l,\nu})).\Ee

Let $\fL^m$ be a finite set of nonnegative integers $\ge m$ and assume $\#
\fL^m\ge 2^m$.
%, with
%ordered as   $l_1<l_2<\dots< l_K$, so that
%$$\card(\fL)\le 2^N.$$
Let
\begin{subequations} \label{fS}
\Be\fS^m=\{(l,\nu): l\in \fL^m, \,x_{l,\nu}\in \cP^m_l\};\Ee
and \Be\fS_{l}^m=\{\nu: (l,\nu)\in \fS^m\}\Ee
\end{subequations}
For  any  indexed sequence
$\{a_{l,\nu}\}$ satisfying $\sup_{l,\nu}|a_{l,\nu}|\le 1,$
we define for $l\in \fL^m$,
\begin{equation}
g_m(x)= \sum_{l\in \fL^m} 2^{-ls}
 \sum_{\nu\in \fS^m_l}
a_{l,\nu}\eta_{l,\nu}(x)\,.
%f_l(x) &=\sum_{\nu\in \fS_l}
%c_{l,\nu}\eta_{l,\nu}(x)\,,
%g(x)= \sum_{l\in \fL} 2^{-ls} f_l(x)
% \sum_{\nu: (l,\nu)\in \fS} c_{l,\nu}\eta_{l,\nu}(x)\,.
\end{equation}
If the families $\fL^m$, $m=1,2,\dots, $ are disjoint, we set
$$
    g:=\sum\limits_{m\geq 1} \beta_m g_m\,.
$$
The proof of the following proposition is a modification of the proof of  a
corresponding result by Christ and one of the authors (\cite{cs-lms}).
\begin{proposition}\label{testfctprop} Let $s>-M_0$.\\
{\em (i)} If $1\leq p,q<\infty$ then
$$
    \|g_m\|_{F^s_{p,q}}\lesssim_{p,q,s}\Big\|\Big(\sum\limits_{l \in
\fL^m}\Big|\sum\limits_{\nu\in \fS^m_l}a_{l,\nu}\bbone_{l,\nu}
\Big|^q\Big)^{1/q}\Big\|_p
$$
and
$$
\|g\|_{F^s_{p,q}}\lesssim_{p,q,s}\Big\|\Big(\sum\limits_{m}
|\beta_m|^q\sum\limits_ { l \in
\fL^m}\Big|\sum\limits_{\nu\in \fS^m_l}a_{l,\nu}\bbone_{l,\nu}
\Big|^q\Big)^{1/q}\Big\|_p\,.
$$
{\em (ii)} If $1\le q\le p<\infty$ then there exists $C_{p,q,s}$ such that
\Be\label{f000}
\|g_m\|\ci{F_{p,q}^s} \le
C_{p,q,s} \big(2^{-m} \#(\fL^m)\big)^{1/q}
\Ee
and
\Be\label{betamineq}
\|g\|\ci{F^s_{p,q}} \le
C_{p,q,s} \Big(\sum_{m\ge 1} |\beta_m|^q 2^{-m} \#(\fL^m)\Big)^{1/q}.
\Ee
\end{proposition}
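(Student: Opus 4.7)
The plan is to establish both bounds by analysing $\psi_k\ast g_m$ pointwise via the local-means characterisation \eqref{localmeans} and then collecting $L_p(\ell_q)$-norms. The starting point is the expansion
\[
\psi_k\ast g_m(x)=\sum_{l\in\fL^m}2^{-ls}\sum_{\nu\in\fS^m_l}a_{l,\nu}\,\psi_k\ast\eta_{l,\nu}(x),
\]
together with two pointwise size estimates for $\psi_k\ast\eta_{l,\nu}$, modelled on Lemmas \ref{psicanc} and \ref{canc-const}. When $k\ge l$, Taylor expansion of $\eta_{l,\nu}$ about $x_{l,\nu}$ combined with the $M_1$ vanishing moments of $\psi$ gives $|\psi_k\ast\eta_{l,\nu}(x)|\lesssim 2^{-(k-l)(M_1+1)}\chi^{*}_{l,\nu}(x)$, with $\chi^{*}_{l,\nu}$ the indicator of a slight enlargement of $\supp\eta_{l,\nu}$. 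When $k\le l$, Taylor expansion of $\psi_k$ about $x-x_{l,\nu}$ together with the assumed cancellation $\int\eta(u)u^n\,du=0$ for $1\le n\le M_0$ leaves only the $n=0$ term, giving $|\psi_k\ast\eta_{l,\nu}(x)|\lesssim 2^{-(l-k)}\chi_{|x-x_{l,\nu}|\le C\,2^{-k}}(x)$ up to a remainder that decays geometrically in $l-k$.

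For (i), multiplying by $2^{ks}$ and summing in $l$ yields
\[
2^{ks}|\psi_k\ast g_m(x)|\lesssim \sum_{l\in\fL^m}2^{-\sigma|k-l|}\sum_\nu|a_{l,\nu}|\,\widetilde{\bbone}_{l,\nu,k}(x),
\]
with $\sigma>0$ provided $M_0$ and $M_1$ are chosen sufficiently large relative to $|s|$. Taking the $\ell_q$-norm in $k$, the geometric factor is absorbed via Young's inequality for discrete convolutions, and the outer $L_p$-norm is controlled by the vector-valued Fefferman--Stein maximal inequality for $1<p,q<\infty$. The endpoints $p=1$ or $q=1$ are handled by a small variant that avoids the maximal inequality, exploiting that the enlargement from $\widetilde{\bbone}_{l,\nu,k}$ is uniformly bounded by a convergent geometric sum thanks to the strict compact support of $\psi$.

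For (ii), observe that the $2^{m-l}$-separation of $\cP^m_l$ makes the supports of the $\bbone_{l,\nu}$ pairwise disjoint at fixed $l$, so $|a_{l,\nu}|\le 1$ implies $\bigl|\sum_\nu a_{l,\nu}\bbone_{l,\nu}\bigr|\le\chi_{A_l}$ with $|A_l|\le 2^{-m}$. It then suffices to prove
\[
\Big\|\Big(\sum_{l\in\fL^m}\chi_{A_l}\Big)^{1/q}\Big\|_p\lesssim (\#\fL^m\cdot 2^{-m})^{1/q}.
\]
The case $p=q$ is the trivial identity $\int\sum_l\chi_{A_l}\le \#\fL^m\cdot 2^{-m}$. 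For $p>q$ the inequalities $|A_l|\le 2^{-m}$ and $\sum_l\chi_{A_l}\le \#\fL^m$ alone only yield $(\#\fL^m)^{1/q}\,2^{-m/p}$, losing a factor $2^{m(1/q-1/p)}$; the required sharpening uses the explicit structure of each $A_l$ as a union of $2^{-l}$-intervals at $2^{m-l}$-separated centres in $[0,1]$. This geometry forces the super-level sets $\{\sum_l\chi_{A_l}>t\}$ to shrink geometrically in $t$, so that a layer-cake integration recovers the exponent $1/q$. This structural step is the main technical obstacle of the proof.

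Finally the bounds for $g$ follow from those for $g_m$: the disjointness of the $\fL^m$ across $m$ ensures that for each dyadic frequency $k$ essentially only one $g_m$ contributes to $\psi_k\ast g$, and the $\ell_q$-structure of the Triebel--Lizorkin norm then reassembles the contributions with the weights $\beta_m$ exactly as in the stated right-hand sides.
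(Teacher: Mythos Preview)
Your treatment of part (i) is essentially an unpacking of the Frazier--Jawerth smooth-atom theorem, which is precisely what the paper invokes without further detail; this is fine and in the same spirit.

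For part (ii) you correctly reduce to the key inequality
\[
\Big\|\Big(\sum_{l\in\fL^m}\chi_{A_l}\Big)^{1/q}\Big\|_p\lesssim \big(2^{-m}\#\fL^m\big)^{1/q},
\]
and you rightly observe that naive interpolation between the $L_q$ bound and the $L_\infty$ bound loses a factor $2^{m(1/q-1/p)}$. But your proposed fix---that the super-level sets $\{\sum_l\chi_{A_l}>t\}$ ``shrink geometrically in $t$'' so that a layer-cake integration recovers the correct exponent---is asserted without argument, and you yourself flag it as ``the main technical obstacle''. That exponential distributional decay is, in effect, the John--Nirenberg inequality applied to $G=(\sum_l\chi_{A_l})^{1/q}$, and nothing in your sketch shows how to obtain it directly from the $2^{m-l}$-separation of $\cP^m_l$.

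The paper takes a different and complete route: it proves $\|G\|_{BMO_{\dyad}}\lesssim(2^{-m}\#\fL^m)^{1/q}$ by an explicit computation on each dyadic interval $J$, splitting the $l$-sum according to whether $2^{-l}\ge|J|$ (then $G_l$ is constant on $J$ and subtracting that constant kills the term) or $2^{-l}<|J|$ (then the separation of $\cP^m_l$ bounds $|J|^{-1}\int_J G_l^q$, and the sum over such $l$ is $O(1+2^{-m}\#\fL^m)$, which is $O(2^{-m}\#\fL^m)$ under the standing hypothesis $\#\fL^m\ge 2^m$). Combined with the trivial $L_q$ bound and the dyadic Fefferman--Stein interpolation theorem between $L_q$ and $BMO$, this gives the $L_p$ estimate for all $q\le p<\infty$. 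This $BMO$ argument is the missing ingredient in your proposal.
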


\begin{proof} The functions $\{\eta_{l,\nu}\}_{l,\nu}$ represent a family
of ``smooth atoms'' in the sense of Frazier/Jawerth \cite[Thm.\ 4.1
and \S 12]{FrJa90} which immediately implies the relations in (i). Here we
need the pairwise disjointness of the sets $\fL^m$.

We continue with proving
\Be\label{intervalbd}
 \Big\|\Big(\sum_{l\in \fL^m}
\Big|\sum_{\nu\in \fS_{l}^m} \bbone\ci{I_{l,\nu}}
\Big|^q\Big)^{1/q}\Big\|_p
\lc\ci{p,q} (2^{-m}\#(\fL^m))^{1/q}\,.
\Ee
Then (i) together with \eqref{intervalbd} and $\sup_{l,\nu}|a_{l,\nu}|\le 1$
gives \eqref{f000}\,.

Indeed, let
$G_l(x) =\sum_{\nu\in \fS_{l}^m} \bbone\ci{I_{l,\nu}}(x)$
and $G(x)=\Big(\sum_{l\in \fL^m} G_l(x)^q \Big)^{1/q}.$ In order to control
the $L_p$-norm of $G$ we use the dyadic version of the Fefferman-Stein
interpolation theorem for $L_q$ and $BMO$. Note that  the proof of  \cite[Thm.\
5]{feffstein-Hp} gives the dyadic version of $\#$-function estimate and thus one
can work with the $BMO_{\rm dyad}$ norm in \cite[Cor.\ 2]{feffstein-Hp}.
Consequently it suffices to show that the norms of $G$ in $L_q$ and $BMO_\dyad$
are bounded by $C(2^{-m} \#(\fL^m))^{1/q}$.
This is immediate for the $L_q$ norm. For the $BMO_{\dyad}$ norm we have to show
that
%for every dyadic interval
\Be\label{dyadbmo}
\sup_J\inf_c \frac{1}{|J|}\int_J |G(y) -c| dy \lc (2^{-m} \#(\fL^m))^{1/q}\,,
\Ee
where the sup is taken over all dyadic intervals and the inf is taken over all complex numbers.

For a fixed dyadic interval $J$ with midpoint $x_J$  we define
$$c_{J,l}= \begin{cases}
\sum_{\nu\in \fS_{l}^m} \bbone\ci{I_{l,\nu}}(x\ci J) &\text{ if } 2^{-l}\ge |J|
\\
0 &\text{ if } 2^{-l}< |J|\end{cases}
$$
and
$$c_J = \Big(\sum_{l\in \fL^m} c_{J,l}^q \Big)^{1/q}
.$$

Fix $J$. Then
\begin{align*}
\frac{1}{|J|}\int_J |G(y) -c_J| dy
&\le
\frac{1}{|J|}\int_J \Big|
\Big(\sum_{l\in \fL^m} G_l(y)^q \Big)^{1/q} -
\Big(\sum_{l\in \fL^m} c_{J,l}^q \Big)^{1/q}\Big| dy
\\
&\le\frac{1}{|J|}\int_J
\Big(\sum_{l\in \fL^m} |G_l(y)-c_{J,l}|^q \Big)^{1/q}  dy
\\
&\le\Big(\sum_{l\in \fL^m}
  \frac{1}{|J|}\int_J |G_l(y)-c_{J,l}
|^q dy\Big)^{1/q}\,.
\end{align*}

Here we have used the triangle inequality in $\ell_q$ and
H\"older's inequality on the interval $J$.
Note that
$$
G_l(y)=c\ci{J,l} \text{ if $y\in J$ and $2^{-l}\ge |J|$.}
$$
Since $c_{J,l}=0$ if
 $2^{-l}< |J|$ we get from the previous estimate
\Be
\frac{1}{|J|}\int_J |G(y) -c\ci J| dy
\le
\Big(\sum_{\substack {l\in \fL^m\\ 2^{-l}<|J|}}
  \frac{1}{|J|}\int_J |G_l(y)|^q dy
\Big)^{1/q}  .
\Ee
Now  by the definition of $G_l$ we have
$$
\int_J |G_l(y)|^q dy \le
\begin{cases}
2^{-l} &\text{ if } 2^{-m}|J|\le 2^{-l}< |J|
\\
2^{-m} |J|
 &\text{ if } 2^{-l} <2^{-m}|J|
\end{cases}
$$
and thus
\begin{align*}
\sum_{\substack {l\in \fL^m\\ 2^{-l}<|J|}}
  \frac{1}{|J|}\int_J |G_l(y)|^q dy\,&\le\,
\sum_{l: 2^{-m}|J|\le 2^{-l}< |J|} (2^l|J|)^{-1}
+\sum_{l\in \fL^m}
%\sum_{\substack {l\in \fL^m\\ 2^{-l}<2^{-m}|J|}}
2^{-m}
\\
&\lc (1+ 2^{-m} \#(\fL^m))\,.
\end{align*}
Since we assume that
$ \#(\fL^m)\ge 2^m$ this finishes the proof of \eqref{dyadbmo}.

Finally, \eqref{betamineq} is a consequence of the second relation in (i), the
triangle inequality in $L_{p/q}$ and \eqref{intervalbd}. In fact, the second
relation in (i) can be rewritten to
$$
    \|g\|_{F^s_{p,q}} \lesssim \Big\|\sum\limits_{m\geq 1}
|\beta_m|^q\sum\limits_{l\in
\fL^m}\Big|\sum\limits_{\nu \in \fS^m_l}a_{\ell,\nu}\bbone_{l,\nu}
\Big|^q\Big\|_ {p/q}^{1/q}
$$
Since $p/q \geq 1$ we obtain
$$
\|g\|_{F^s_{p,q}} \lesssim
\Big(\sum\limits_{m}
|\beta_m|^q\Big\|\sum\limits_{l \in
\fL^m}\Big|\sum\limits_{\nu \in \fS^m_l}\bbone_{l,\nu}\Big|^q
\Big\|_{p/q}\Big)^{1/q}
$$
and \eqref{intervalbd} finishes the proof.
\end{proof}

\section{Lower bounds for Haar projection numbers}
%: The case  $q<p$, $s<-1/q'$}
\label{lowerbdsect-mosts}

In this section we require that  $\psi$ is supported on
$(-2^{-4}, 2^{-4})$ and that  $\int \psi(x) x^Mdx=0$ for $M=0,1,\dots, M_0$ for some large integer $M_0$, and let $\psi_k=2^k\psi(2^k\cdot)$.
Let $\Psi(x)= \int_{-\infty}^x \psi(t) dt$, the primitive which is also supported in   $(-2^{-4}, 2^{-4})$.
For $h_{0,0}=\chi_{[0,1/2)}-\chi_{[1/2,1)}$ we have
$$\psi*h_{0,0} (x) = \Psi(x)+\Psi(x-1)-2\Psi(x-\tfrac 12)$$
and therefore $\psi*h_{0,0} (x) = -2\Psi(x-\tfrac 12) $ for $ x\in [1/4,3/4].$
Thus there is $c_0>0$ and  a subinterval $J\subset [1/4,3/4]$ so that
$$|\psi* h_{0,0}(x)|\ge  c_0, \text{ for } x\in J\,.$$
For $k=0,1,2,\dots$ and $\mu\in \bbZ$ let $J_{k,\mu}=2^{-k}\mu +2^{-k}J$ which is a subinterval of the middle half  of $I_{k,\mu}$, of length $\gc 2^{-k}$,  and we have
\Be \label{lowerboundforconv}
|\psi_k*h_{k,\mu}(x)|\ge c_0 \text{ for } x\in J_{k,\mu}.
\Ee

We now prepare for the definition of a  suitable family of test functions.
Let $\eta$ be an odd $C^\infty$ function supported in $(-2^{-5},2^{-5})$
so that $\int \eta(x) x^Mdx=0$ for $M=0,1,\dots, M_0$ and so that
%$\int_{0}^{1/2} \eta(x) dx \ge 1\,$, and thus also
\Be \label{etanondeg}
2\int_{0}^{1/2} \eta(x) dx =
\int_{0}^{1/2} \eta(x) dx -  \int_{-1/2}^0 \eta(x) dx
\ge 1\,.
\Ee

We now pick an arbitrary set $A$ of Haar frequencies and $N$ so that
\Be\label{cardA}\Lambda<\#A+1, \text{ and } 2^N\le \# A<2^{N+1},
\Ee
and fix $N$ for the remainder of the section.
Define, for $n=N, N-1,\dots, 1$
\Be
\eta_{k,n,\mu}(y) =\eta(2^{k+n}(x-2^{-k}\mu-2^{-k-1}))
\label{etakm}
\Ee
Let  $r_k$ denote the Rademacher function on $[0,1]$.
For $t\in [0,1]$ and $2^k\in A$ let
\begin{subequations}\label{Ups}
\Be\label{Upsk}
\Ups_{k}(y)=
\sum_{n=0}^N \alpha_n\Ups_{k,n}(y) \Ee  with \Be\label{Upskm} \Ups_{k,n}(y)= 2^{n(-s+1/q)}\sum_{\mu=0}^{2^k-1}
\eta_{k,n,\mu}(y).
\Ee
\end{subequations}
\begin{subequations}\label{fteq}
Let
\Be\label{ftn}f_{n,t}(y)=2^{-N/q} \sum_{2^k\in A} r_k(t) 2^{-ks}\Ups_{k,n}(y)\Ee and
\Be\label{ft}f_t(y)=\sum_{n=1}^N\alpha_n f_{n,t}.\Ee
\end{subequations}
\begin{lemma} The following estimates hold uniformly in $t\in [0,1]$.
%, $\|\{\alpha_n\}\|_\infty \le 1$.

(i) For $n=1,\dots, N$,
$$\|f_{n,t} \|_{F^s_{p,q}}\le C_{p,q,s}.$$
(ii) Suppose that $\log_2 A$ is $N$-separated (i.e. $2^j\in A$, $2^{j'}\in A$, $j\neq j'$ implies $|j-j'|\ge N$). Then
$$\|f_{t} \|_{F^s_{p,q}}\le C_{p,q,s}
%N^{1/q}.$$
\Big(\sum_{n=1}^N|\alpha_n|^q\Big)^{1/q}.
$$
\end{lemma}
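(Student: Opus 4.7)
The strategy is to identify $f_{n,t}$ and $f_t$ as instances (up to an explicit scalar normalization) of the test functions $g_m$ and $g$ constructed in Proposition \ref{testfctprop}, and then read off the bounds. The key bookkeeping is to relabel the scale parameter: in \eqref{etakm} the bump $\eta_{k,n,\mu}$ lives at scale $2^{-(k+n)}$, so setting $l = k+n$ and $m = n$ matches the proposition's convention, where $\eta_{l,\nu}(x) = \eta(2^l(x-x_{l,\nu}))$ sits over points $x_{l,\nu}$ that are $2^{m-l}$-separated in $[0,1]$.

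Concretely, for part (i) I would fix $n \in \{1,\dots,N\}$ and set $\fL^n = \{l : l=k+n \text{ for some } 2^k \in A\}$, $\cP^n_l = \{2^{-(l-n)}\mu + 2^{-(l-n)-1} : \mu = 0,\dots,2^{l-n}-1\}$, and $a_{l,\nu} = r_{l-n}(t) \in \{\pm 1\}$. The $2^{-k}=2^{n-l}$-spacing of the centers and $\#\fL^n = \#A \in [2^N,2^{N+1})\ge 2^n$ verify the hypotheses. A direct computation using $2^{-ks} \cdot 2^{n(-s+1/q)} = 2^{-ls}\cdot 2^{n/q}$ shows
\[
f_{n,t} \;=\; 2^{(n-N)/q} \sum_{l\in\fL^n} 2^{-ls}\sum_{\nu\in\fS^n_l} r_{l-n}(t)\,\eta_{l,\nu} \;=\; 2^{(n-N)/q} g_n,
\]
so Proposition \ref{testfctprop}(ii) gives
\[
\|f_{n,t}\|_{F^s_{p,q}} \;\lesssim\; 2^{(n-N)/q}\bigl(2^{-n}\#\fL^n\bigr)^{1/q} \;\approx\; 2^{(n-N)/q}\cdot 2^{(N-n)/q} \;=\; 1,
\]
uniformly in $t$.

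For part (ii) the same matching identifies $f_t = \sum_{n=1}^N \beta_n g_n$ with $\beta_n := \alpha_n 2^{(n-N)/q}$. To quote the second bound in Proposition \ref{testfctprop}(ii) I first need the family $\{\fL^n\}_{n=1}^N$ to be pairwise disjoint. If $l \in \fL^n \cap \fL^{n'}$ with $n \neq n'$, then there exist $2^k,2^{k'}\in A$ with $k+n = k'+n'$, hence $|k-k'| = |n-n'| < N$; since $k \ne k'$, this contradicts the hypothesis that $\log_2 A$ is $N$-separated. With disjointness in hand, the proposition yields
\[
\|f_t\|_{F^s_{p,q}} \;\lesssim\; \Big(\sum_{n=1}^N |\beta_n|^q\, 2^{-n}\#\fL^n\Big)^{1/q} \;\lesssim\; \Big(\sum_{n=1}^N |\alpha_n|^q\, 2^{n-N} \cdot 2^{-n}\cdot 2^{N+1}\Big)^{1/q} \;\lesssim\; \Big(\sum_{n=1}^N |\alpha_n|^q\Big)^{1/q}.
\]

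There is no substantial analytic obstacle here: Proposition \ref{testfctprop} is doing all the heavy lifting (it is the BMO-dyadic bound for the square function of characteristic functions that powers the $(2^{-m}\#\fL^m)^{1/q}$ estimate). The only things to watch are the three compatibility checks---(a) the separation $2^{m-l}$ of the centers in $[0,1]$, (b) the lower bound $\#\fL^m \ge 2^m$, and (c) disjointness of the $\fL^n$---and the exact exponent computation showing that the normalization factor $2^{n(-s+1/q)}$ in \eqref{Upskm} together with the global $2^{-N/q}$ in \eqref{ftn} conspires to give the dimensionally correct scalar $2^{(n-N)/q}$ relating $f_{n,t}$ to $g_n$.
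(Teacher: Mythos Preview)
Your proposal is correct and follows exactly the same route as the paper: identify $f_{n,t}$ with $2^{(n-N)/q}g_n$ via the relabeling $l=k+n$, $m=n$, then invoke Proposition~\ref{testfctprop}(ii), and for part (ii) use the $N$-separation hypothesis to ensure disjointness of the $\fL^n$. Your write-up is in fact more detailed than the paper's own proof, which records the same formula for $f_{n,t}$ and then simply notes that (i) follows from $2^{-n}\#\fL^n\lesssim 2^{N-n}$ and that (ii) follows since the $\fL^n$ are ``essentially disjoint''.
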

\begin{proof} Let $\fL^n=\{l:2^{l-n}\in A\}$. Then
$$f_{n,t}(y)= 2^{(n-N)/q} \sum_{l\in \fL^n} r_{l-n}(t) 2^{-ls}
\sum_\mu \eta(2^l(x-2^{n-l}\mu- 2^{n-l-1})$$
and (i) follows from Proposition \ref{testfctprop} since $2^{-n}\#(\fL^n) \lc
2^{N-n}$. Since the sets $\fL^n$, $n=1,\dots, N$, are essentially disjoint (ii)
follows as well.
\end{proof}
For $t\in [0,1]$ let
\Be T_t g(x)=\sum_{2^j\in A} r_j(t)   \sum_{\mu=0}^{2^j-1}2^j
\inn{h_{j,\mu} }{g} h_{j,\mu}(x).
\Ee
We seek to derive a lower bound for $\|T_{t_1}f_{t_2}\|_{F^s_{p,q}}$ for most
$t_1, t_2$. This is accomplished by
\begin{proposition} \label{qlowerbd}
Let $-1<s\le -1/q'$. Let $f_{N,t}$ as in \eqref{ftn} (with $n=N$) and $f_t$  as in \eqref{ft}.
 Then  there is $c>0$ such that the following relations\\
\noindent (i) $$
\Big(\int_{0}^1\int_0^1 \big\| T_{t_1} f_{N,t_2} \big\|\ci{F^s_{p,q}}^q dt_1 dt_2\Big)^{1/q} \ge c  2^{N(-s-1/q')}
$$ and (ii) $$
\Big(\int_{0}^1\int_0^1 \big\| T_{t_1} f_{t_2} \big\|\ci{F^s_{p,q}}^q dt_1 dt_2\Big)^{1/q} \ge
c \Big| \sum_{n=0}^N\alpha_n 2^{n(-s-\frac{1}{q'})}\Big|
% C  2^{N(-s-1/q')} \min  \{|s+1/q'|^{-1}, N\}
% C  2^{N(-s-1/q')} \min  \{|s+1/q'|^{-1}, N\}
$$
hold true. 
\end{proposition}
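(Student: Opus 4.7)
My plan is to isolate the ``Rademacher diagonal'' of the Haar expansion of $T_{t_1}f_{n,t_2}$, control it via the local means characterization of $F^s_{p,q}$ and the pointwise bound \eqref{lowerboundforconv}, and use Khintchine's inequality for the product Rademacher system to reduce matters to a single scale.

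I would first expand
\[
T_{t_1}f_{n,t_2}(y)=\sum_{2^j\in A}\sum_{2^k\in A} r_j(t_1)r_k(t_2)\, Y^n_{j,k}(y),
\]
where $Y^n_{j,k}(y)=2^{-N/q}2^{n(-s+1/q)}\sum_{\mu,\mu'}2^{-ks}\cdot 2^j\inn{h_{j,\mu'}}{\eta_{k,n,\mu}}\,h_{j,\mu'}(y)$. Since $\eta$ is odd and $\eta_{k,n,\mu}$ is localized strictly inside the middle of $I_{k,\mu}$ on scale $2^{-k-n-5}$, the nonvanishing assumption \eqref{etanondeg} yields $\inn{h_{k,\mu}}{\eta_{k,n,\mu}}=-2c_\eta\,2^{-k-n}$ with $c_\eta=\int_0^{1/2}\eta\ge \tfrac12$, while $\inn{h_{k,\mu'}}{\eta_{k,n,\mu}}=0$ for $\mu'\ne \mu$ by disjointness of supports. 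Using $\tfrac1q+\tfrac1{q'}=1$, the Rademacher-diagonal $(j,k)=(l,l)$ therefore simplifies to $Y^n_{l,l}(y)= -2c_\eta\,2^{-N/q}\,2^{n(-s-1/q')}\,2^{-ls}\sum_\mu h_{l,\mu}(y)$ for $2^l\in A$, and vanishes otherwise.

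Next I would invoke the local means characterization from \S\ref{LPsect} together with the embedding $L^q(dt_1dt_2;L^p_y)\hookrightarrow L^p_y(L^q(dt_1dt_2))$ (valid since $q\le p$) to obtain
\[
\int_0^1\!\!\!\int_0^1\!\!\|T_{t_1}f_{n,t_2}\|_{F^s_{p,q}}^q dt_1 dt_2 \gc \Big\|\Big(\sum_{l\ge 0} 2^{lsq}\!\!\int_0^1\!\!\!\int_0^1\!\!|\psi_l*T_{t_1}f_{n,t_2}|^q dt_1 dt_2\Big)^{1/q}\Big\|_p^q.
\]
For fixed $y,l$, Khintchine's inequality for the product Rademacher system combined with $(\sum_{j,k}|a_{j,k}|^2)^{q/2}\ge |a_{l,l}|^q$ gives $\int\!\!\int |\psi_l*T_{t_1}f_{n,t_2}(y)|^q dt_1 dt_2\gc |\psi_l*Y^n_{l,l}(y)|^q$. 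Since $J_{l,\mu}$ sits in the middle half of $I_{l,\mu}$ at distance $\gc 2^{-l}$ from its endpoints and $\psi$ is compactly supported in $(-2^{-4},2^{-4})$, on $J_{l,\mu_0}$ only $\psi_l*h_{l,\mu_0}$ contributes to $\sum_\mu \psi_l*h_{l,\mu}$; then \eqref{lowerboundforconv} forces $|\sum_\mu\psi_l*h_{l,\mu}(y)|\ge c_0$ for $y\in E_l:=\bigcup_\mu J_{l,\mu}\subset[0,1]$ with $|E_l|\gc 1$. Hence $2^{lsq}|\psi_l*Y^n_{l,l}(y)|^q\gc 2^{-N}\cdot 2^{nq(-s-1/q')}\bbone_{E_l}(y)$.

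For part (i), taking $n=N$ yields $\sum_l 2^{lsq}|\psi_l*Y^N_{l,l}|^q \gc 2^{Nq(-s-1/q')-N}S(y)$, with $S=\sum_{l:\,2^l\in A}\bbone_{E_l}$ satisfying $\int S\gc \#A\sim 2^N$; since $S$ is supported in $[0,1]$ and $p\ge q$, H\"older's inequality gives $(\int S^{p/q}dy)^{q/p}\ge \int S\gc 2^N$, producing $\int\!\!\int \|T_{t_1}f_{N,t_2}\|_{F^s_{p,q}}^q dt \gc 2^{Nq(-s-1/q')}$ and (i) after extracting $q$-th roots. For part (ii), linearity in $n$ merely replaces the scalar $2^{n(-s-1/q')}$ by $B=\sum_{n=0}^N\alpha_n 2^{n(-s-1/q')}$ in the formula for $Y^n_{l,l}$, and the identical calculation delivers the bound $c|B|$. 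The main obstacle is the Khintchine reduction: one must verify that discarding all Rademacher-off-diagonal terms $(j,k)\ne(l,l)$ (which may be large but oscillate) is legitimate in the above lower-bound chain, and that the retained diagonal term is pointwise nontrivial on a set of positive density --- both points resting on the oddness of $\eta$, the tight localization of $\eta_{k,n,\mu}$, and the pointwise bound \eqref{lowerboundforconv}.
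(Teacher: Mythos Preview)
Your proposal is correct and follows essentially the same route as the paper's proof: expand in the product Rademacher system, apply Khintchine to pass to the $\ell^2$ sum over $(j,k)$, retain only the diagonal $(j,k)=(l,l)$, compute $\inn{h_{l,\mu}}{\eta_{l,n,\mu}}$ explicitly using the oddness and localization of $\eta$, and finish via \eqref{lowerboundforconv} on the intervals $J_{l,\mu}$. The only cosmetic difference is in the interchange step: the paper first uses H\"older on $[-1,2]$ (since $p\ge q$) to drop from the $L_p$ to the $L_q$ norm and then applies Fubini, whereas you invoke Minkowski's integral inequality $\|G\|_{L^q_t(L^p_y)}\ge \|G\|_{L^p_y(L^q_t)}$ to keep the $L_p$ norm and compensate with H\"older on $S$ at the very end; both are valid. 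Your closing worry about the ``Khintchine reduction'' is unfounded: Khintchine gives a two-sided equivalence with the square function $(\sum_{j,k}|a_{j,k}|^2)^{1/2}$, so dropping all off-diagonal terms is a legitimate lower bound.
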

\begin{proof}
Note that (i) is a special case of (ii) (with the choice $\alpha_N=1$, $\alpha_n=0$ for $n\le N$).
The left hand side in (ii)  is equivalent with
$$\Big(\int_{0}^1\int_0^1 \Big\| \Big(\sum_{k=0}^\infty 2^{ksq} |\psi_k* T_{t_1} f_{t_2}|^q \Big)^{1/q} \Big\|_p^q dt_1 dt_2\Big)^{1/q}. $$
Since  $\psi_k* T_{t_1}f_{t_2}$ is supported in $[-1,2]$, we can use
 H\"older's inequality (with $p\ge q$) to see that this expression is bounded below by a positive constant times
\begin{align}&\Big(\int_{0}^1\int_0^1 \Big\| \Big(\sum_{2^k\in A} 2^{ksq} |\psi_k*( T_{t_1} f_{t_2})|^q \Big)^{1/q} \Big\|_q^q dt_1 dt_2\Big)^{1/q}
\notag
\\
\label{fubini}&=
\Big(\sum_{2^k\in A}2^{ksq}
\Big\|\Big(\int_{0}^1\int_0^1 \big|
\psi_k* T_{t_1} f_{t_2}(x)\big|^q dt_1 dt_2\Big)^{1/q}
\Big\|_q^q \Big)^{1/q}.
\end{align}
%We may now interchange the $(t_1,t_2)$ and the $x$ integrations, and $k$-summations,  and see that the last expression is equal to
For fixed $x$ we have
$$\psi_k*T_{t_1} f_{t_2}(x)= 2^{-N/q}\sum_{2^j\in A}\sum_{2^l\in A} r_j(t_1)r_l(t_2)
2^{-ls}\sum_{\mu=0}^{2^j-1}2^j\inn{\Ups_{l}}{h_{j,\mu} }\psi_k* h_{j,\mu}(x)
$$
and, by Khinchine's inequality,
\begin{align*}
&\Big(\int_{0}^1\int_0^1 \big|
\psi_k*( T_{t_1} f_{t_2})(x)\big|^q dt_1 dt_2\Big)^{1/q}
\\&\ge c(q) 2^{-N/q} \Big(\sum_{2^j\in A}\sum_{2^l\in A}\Big|
2^{-ls}
\sum_{\mu=0}^{2^j-1}2^j\inn{\Ups_{l}}{h_{j,\mu} }\psi_k* h_{j,\mu}(x)\Big|^2\Big)^{1/2}
\end{align*}
Hence, for $2^k\in A$, picking up only the terms with $j=k$ and $l=k$,
\begin{align*}
&\Big(\int_{0}^1\int_0^1 \Big|
\psi_k*( T_{t_1} f_{t_2})(x)\Big|^q dt_1 dt_2\Big)^{1/q}
\\&\gc 2^{-N/q} \Big|
2^{-ks}
\sum_{\mu=0}^{2^k-1}2^k\inn{\Ups_{k}}{h_{k,\mu} }
\psi_k* h_{k,\mu}(x)\Big|\,.
\end{align*}
Observe that the supports of $h_{k,\mu}$ and $\eta_{k,m,\mu'}$ are disjoint when $\mu\neq\mu'$. Thus
\begin{align*}
2^k\inn{\Ups_{k}}{h_{k,\mu} }&=
\sum_{n=0}^N \alpha_n2^{n(-s+1/q)}\sum_{\mu'=0}^{2^k-1}
2^k\inn{\eta_{k,n,\mu'}} {h_{k,\mu}}
\\
&=
\sum_{n=0}^N \alpha_n 2^{n(-s+1/q)}
2^k\inn{\eta_{k,n,\mu} }{h_{k,\mu}}\,.
\end{align*}
Furthermore
\begin{align*}
&2^k\inn{\eta_{k,n,\mu}} {h_{k,\mu}}
\\
&= 2^k \int \eta(2^{k+n}(x-2^{-k}\mu-2^{-k-1})) h_{0,0}(2^k x-\mu) dx
\\
&=  \int \eta (2^n(y-\tfrac 12)) h_{0,0} (y) dy
\\
&=  \int_{-1/2}^{0} \eta (2^n y) dy
-\int_0^{1/2} \eta (2^n y) dy \\
&= -
2^{-n+1} \int_0^{1/2} \eta(u) du\,
\end{align*}
where in the last line we have used that  $\eta$ is odd and supported in $(-2^{-4}, 2^{-4})$.

Next we observe that
$$\psi_k*h_{k,\tilde\mu}(x)=0, \text{ for  $x\in J_{k,\mu}$, $\mu\neq \tilde\mu$. }$$
So from the above we get, for $x\in J_{k,\mu}$
\begin{align*}
&2^k \sum_{\tilde\mu=0}^{2^k-1}\inn {\Ups_k}{h_{k,\tilde\mu} }\psi_k*h_{k,\tilde\mu}(x)
\\&=2^k \inn {\Ups_k}{h_{k,\mu} }\psi_k*h_{k,\mu}(x)
\\
&=-2\psi_k*h_{k,\mu}(x) \int_0^1\eta(u)du\, \sum_{n=0}^N \alpha_n 2^{n(\frac
1q-1-s)} .
\end{align*}
%\sum_{m=0}^N 2^{m(-s+1/q)}\sum_{\mu=0}^{2^k-1}\eta_{k,m,\mu}(y)
Finally  we can prove the lower bound for the expression
\eqref{fubini} and obtain
\begin{align*}
&\Big(\sum_{2^k\in A}2^{ksq}
\Big\|\Big(\int_{0}^1\int_0^1 \big|
\psi_k*( T_{t_1} f_{t_2})(x)\big|^q dt_1 dt_2\Big)^{1/q}
\Big\|_q^q \Big)^{1/q}
\\
&\gc \Big(\sum_{2^k\in A} 2^{ksq} \sum_{\mu=0}^{2^k-1}\int_{J_{k,\mu}}
\Big[2^{-N/q} 2^{-ks} 2^k|\inn{\Ups_k}{h_{k,\mu}}| |\psi_k*h_{k,\mu}(x)| \Big]^q dx \Big)^{1/q}
\\
&\gc
\Big| \int_0^{1/2} \eta(x) dx   \sum_{n=0}^N \alpha_n2^{n(-s-\frac 1{q'})}\Big|
\Big(\sum_{2^k\in A}2^{-N} \sum_{\mu=0}^{2^k-1}\int_{J_{k,\mu}} |\psi_k*h_{k,\mu}(x)|^q dx \Big)^{1/q}
\\
&\gc
 \Big| \sum_{n=0}^N \alpha_n 2^{n(-s-\frac 1{q'})} \Big|\,.
  %\gc  \max \{ N, |s+1/q'| 2^{N(-s-\frac 1{q'})}\}
  \end{align*}
  Here we have used
  \eqref{lowerboundforconv}, \eqref{etanondeg}, and \eqref{cardA}, and the condition  $s\le -1/q'$.
\end{proof}

\subsection*{\it Growth of  $\gamma_*(F^s_{p,q}, \Lambda)$, $s<-1/q'$}
Take $A$ as in \eqref{cardA}.
Let  $f_{t,N}$ be as in \eqref{ftn} with $n=N$, so that $\|f_{t,N}\|_{F^s_{p,q}}\lc 1$.
By Proposition \ref{qlowerbd} there exist $t_1$, $t_2$ in $[0,1]$ so that
$$\|T_{t_1} f_{N,t_2} \|_{F^s_{p,q}}\gc 2^{N(-s-\frac 1{q'})}$$
Hence
$$\|T_{t_1}  \|_{F^s_{p,q} \to F^s_{p,q}}\ge c_{p,q,s}
2^{N(-s-\frac 1{q'})}$$
Now let
\Be \label{Epm}E^{\pm}:=\{h_{j,\mu}: 2^j\in A,\, r_j(t_1)=\pm 1, \,\mu=0, \dots, 2^j-1\}.\Ee Then
$$T_{t_1} = P_{E^+}- P_{E^-}$$ and thus at least one of
$ P_{E^+}$ or $P_{E^-}$ has operator norm bounded below by
$c_{p,q,s}2^{N(-s-\frac 1{q'})}$.
Since $\HF(E^\pm)\subset A$ we get
$$\cG (F^s_{p,q} , A)
\gc 2^{N(-s-\frac 1{q'})}, \qquad s\le -1/q'$$
and the asserted lower bound for
$\cG (F^s_{p,q} ;\Lambda)$ follows in the range $s<-1/q'$.
By  Theorem \ref{upperlabounds} we also have
$$\cG(F^{-1/q'}_{p,q},A)\le c(p,q,s) \Lambda^{-s-1/q'}.$$ Thus
$\gamma^*(F^{-s}_{p,q};\La)\approx
\Lambda^{-s-1/q'}$
for large $\La$.
%\qed

\subsection*{\it Remark}
The above arguments already  give a lower bound $c(\log \Lambda)^{1/q'}$  in the endpoint case,
for the lower Haar projection numbers
 $\gamma_*(F^{-1/q'}_{p,q}, \Lambda)$. Let $A'$ be an $2N$ separated subset
 with $\#(A')\ge (2N)^{-1}\#A$. Let $f_t$ as in \eqref{fteq} with $A$ replaced by $A'$ and with the choices $s=-1/q'$ and $\alpha_n=1$, $n=1,\dots, N$. Then
$\|f_t\|_{F^{-1/q'}_{p,q}}\lc N^{1/q}$.
By Proposition \ref{qlowerbd} there exist $t_1$, $t_2$ in $[0,1]$ so that
$\|T_{t_1} f_{t_2} \|_{F^{-1/q'}_{p,q}}\gc N.$
Hence
$\|T_{t_1}  \|_{F^{-1/q'}_{p,q} \to F^{-1/q'}_{p,q}}\ge c_{p,q}
N^{1/q'} .$
Now let
$E^{\pm}$ be as in \eqref{Epm}. Then
$\max_\pm \| P_{E^\pm}\|_{F^{-1/q'}_{p,q} \to F^{-1/q'}_{p,q}}\ge
\frac{c_{p,q}}{2}N^{1/q'}.$  Thus $\cG (F^{-1/q'}_{p,q} , A)
\ge \cG (F^{-1/q'}_{p,q} , A')
\gc N^{1/q'}$ and hence
$\gamma_* (F^{-1/q'}_{p,q} ;\Lambda)\gc (\log \Lambda)^{1/q'}$.
%\qed

\section{Lower bounds for the endpoint case}
\label{gammaupperstar}
% $\gamma^*(F^{-1/q'}_{p,q};\La)$}
In this section we prove the lower bounds in Theorem \ref{GXAbds}. The following
result provides a slightly sharper bound where a min is replaced by an average.

%We consider lower bounds for $\cG(F^{-1/q'}_{p,q},A)$ for sets
%$A\subset \{2^k : k=1,2,\dots\}$ with $\# A\ge 4^N$. We also set

\begin{theorem}  \label{Zthm} Assume $\#A\ge 4^N$ and that  $4^N$
disjoint  intervals $I_\ka$, $\ka=1,\dots, 4^N$ are given such  that
the length  of $I_\ka$ is  $N$,  and such that
$I_\ka\cap \log_2A\neq \emptyset$.
Let
\Be\label{Zdef}  Z=
\frac{1}{4^{N}}\sum_{\kappa=1}^{4^N} \#(I_\ka\cap \log_2A)\,.
\Ee
% $$\#\{ k\in I_M: 2^k \in A\}\ge Z , \quad M=1,\dots, 4^N\}.$$
Then,  for $q\le p<\infty$,
$$\cG(F^{-1/q'}_{p,q};A)  \ge c(p,q) N^{1-1/q}
Z^{1/q}.$$ \end{theorem}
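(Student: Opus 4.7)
The plan is to refine the argument of Proposition~\ref{qlowerbd} and the concluding remark of Section~\ref{lowerbdsect-mosts} by exploiting \emph{all} elements of $\log_2 A$ that lie inside the clusters $I_\kappa$, rather than extracting a single $N$-separated sub-collection. The extra Haar frequencies, indexed by the pairs $(\kappa,j)$ with $k_\kappa^{(j)}\in J_\kappa$, will produce the additional factor $Z^{1/q}$. As in Section~\ref{lowerbdsect-mosts}, the operator identity $T_{t_1}=P_{E^+}-P_{E^-}$ will convert a lower bound for $\|T_{t_1}\|_{F^{-1/q'}_{p,q}\to F^{-1/q'}_{p,q}}$ into the desired lower bound for $\cG(F^{-1/q'}_{p,q};A)$.

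Concretely, I enumerate the elements of $J_\kappa$ as $k_\kappa^{(1)}<\cdots<k_\kappa^{(m_\kappa)}$ (so that $\sum_\kappa m_\kappa=4^N Z$), fix $\alpha_n=1$ for $0\le n\le N-1$ in the definition \eqref{Ups} of $\Upsilon_k$, and introduce independent Rademacher variables $r_{\kappa,j}$ to define
\[
  f_t(y)\;=\;C_0\sum_{\kappa=1}^{4^N}\sum_{j=1}^{m_\kappa}r_{\kappa,j}(t)\,2^{k_\kappa^{(j)}/q'}\,\Upsilon_{k_\kappa^{(j)}}(y),
\]
where $C_0$ is a normalization to be chosen. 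The upper bound on $\|f_t\|_{F^{-1/q'}_{p,q}}$ will use Proposition~\ref{testfctprop}(i) together with Khintchine's inequality on the $r_{\kappa,j}$: at each Littlewood--Paley scale $l$, the contributing base scales $k$ lie in $[l-N+1,l]\cap\log_2 A$, whose local density averages to $Z$ by the definition of $Z$ and the disjointness of the intervals $I_\kappa$. With the correct bookkeeping of this local density against the $L^p(\ell^q)$ structure, one obtains the normalization $C_0\sim(4^N N)^{-1/q}$ that makes $\|f_t\|_{F^{-1/q'}_{p,q}}\lc 1$ uniformly in $t$.

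The lower bound on $\|T_{t_1}f_{t_2}\|$ is in essence the diagonal argument of Proposition~\ref{qlowerbd}(ii), which does not rely on any separation hypothesis for the base scales. At the endpoint $s=-1/q'$, each diagonal contribution (with $j=l=k$ in the proof of that proposition) is amplified by $|\sum_n\alpha_n|=N$ through the inner product computation with $\Upsilon_k$. Summing over the $4^N Z$ such $k\in\log_2 A$ and carrying out the Khintchine averaging over both $r_{\kappa,j}(t_2)$ and the Rademacher signs in $T_{t_1}$, followed by $L^q$ integration in $x$ using $|\psi_k*h_{k,\mu}|\ge c_0$ on $J_{k,\mu}$, one obtains
\[
  \Big(\iint\|T_{t_1}f_{t_2}\|^q_{F^{-1/q'}_{p,q}}\,dt_1\,dt_2\Big)^{1/q}\;\gtrsim\;C_0\cdot N\cdot(4^N Z)^{1/q}.
\]
Choosing $t_1,t_2$ realizing this average bound, dividing by $\|f_{t_2}\|\lc 1$, and invoking $T_{t_1}=P_{E^+}-P_{E^-}$ yields $\cG(F^{-1/q'}_{p,q};A)\gtrsim N^{1-1/q}Z^{1/q}$.

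The main technical obstacle is the refined norm estimate for $f_t$: when two base scales $k,k'\in\log_2 A$ lie within $N$ of each other, the corresponding multi-scale building blocks $\Upsilon_k$, $\Upsilon_{k'}$ contribute atoms at overlapping Littlewood--Paley scales, and one must show that these overlaps contribute only a factor proportional to $Z^{1/q}$ (on average over the relevant scales) rather than something worse. This calls for combining Khintchine's inequality on the $r_{\kappa,j}$ with the atomic estimates of Proposition~\ref{testfctprop} in a way that extracts precisely the average density $Z$ of $\log_2 A$ in the length-$N$ windows, matching the gain in the lower bound.
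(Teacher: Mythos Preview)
There is a genuine gap in your upper bound for $\|f_t\|_{F^{-1/q'}_{p,q}}$: the claimed normalization $C_0\sim(4^NN)^{-1/q}$ is not achievable with the test function you propose. The building block $\Upsilon_k$ (with $\alpha_n\equiv1$, $s=-1/q'$) carries weight $2^n$ at Littlewood--Paley scale $k+n$, so after extracting $2^{l/q'}$ the atom coefficients at scale $l=k+n$ are of size $2^{n/q}$, \emph{not} $O(1)$. Hence Proposition~\ref{testfctprop} does not apply directly, and a direct computation (already for $p=q$, using orthogonality/Khintchine) gives
\[
\mathbb{E}_t\|f_t\|^q_{F^{-1/q'}_{q,q}}\ \approx\ C_0^q\sum_{l}\#\big\{k\in\textstyle\bigcup_\kappa J_\kappa:\ l-N<k\le l\big\}\ =\ C_0^q\cdot N\cdot 4^N Z.
\]
Thus the best normalization is $C_0\sim(4^N N Z)^{-1/q}$, and dividing your (correct) lower bound $C_0\,N\,(4^NZ)^{1/q}$ by $\|f_{t_2}\|$ yields only $N^{1-1/q}$, with no $Z^{1/q}$. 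The overlap of the sets $\fL^n$ that you flag as the ``main technical obstacle'' is precisely what kills the argument; Khintchine on the $r_{\kappa,j}$ cannot remove the factor $Z$ from the upper bound because it enters through the unbounded atom coefficients $2^{n/q}$, not through sign cancellation.

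The paper avoids this by a different construction. The test function uses \emph{one} randomized scale $l=b_\kappa+N$ per cluster (so that $\|f_t\|$ is independent of $Z$), built from a multi-scale block $H_l=\sum_{\sigma=1}^N2^{-\sigma}\,(\text{bumps at scale }l-\sigma)$ whose \emph{decreasing} weights keep the atom coefficients bounded by $1$; then \eqref{betamineq} gives $\|f_t\|\lesssim N^{1/q}$ (the shifted index sets $\fL(\sigma)$ are disjoint because the intervals $I_\kappa$ are). The $Z$ enters only through the \emph{operator}: the projection $\cT_{t_1}$ uses all Haar frequencies $k\in\fA(\kappa)$, paired with a sparse set of $\mu$'s matching the bump locations in $H_l$, and after Khintchine each such $k$ contributes a separate diagonal term in the $\ell^q$ sum. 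This produces $(\sum_\kappa\#\fA(\kappa))^{1/q}=(4^NZ)^{1/q}$ in the lower bound, and dividing by $N^{1/q}$ gives $N^{1-1/q}Z^{1/q}$.
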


%Theorem \ref{Zthm} implies the following counterpart to part (ii) of  Theorem
%\ref{upperlabounds}.
%\begin{corollary} Let $1<q<p<\infty$.
%Let $A\subset \{2^n: n=1,2,\dots\}$ and assume that $\#A\ge 4^N$  and
%$$\min_{k\in A} \# \{n: 2^n\in A, |n-k|\le N\} \ge Z.$$
%Then there is a subset $E$ of the Haar system consisting of Haar functions %supported in $[0,1]$ such that $\HF(E)\subset A$ and
%$$\|P_E \|_{F^{-1/q'}_{p,q}\to F^{-1/q'}_{p,q}}\ge c(p,q) N^{1-1/q} Z^{1/q}\,.$$
%\end{corollary}

\begin{proof}[Proof that Theorem  \ref{Zthm} implies  Theorem \ref{GXAbds}]
The upper bounds follow easily from Theorem \ref{upperlabounds}. For the lower bounds let  $A\subset\{2^n: n\ge 1\}$  be of large cardinality and let $N$ be such that
$8^{N-1}\le \#A \le 8^N$. Let $\underline \cZ(A)=Z$. Then we can find
$M_N$ disjoint  intervals
$$I_i= (n_i-3N, n_i+3N)$$  with  midpoints $n_i\in \log_2(A)$,
$i=1,\dots, M_N$  so that $M_N\ge 8^{N-1}/N $ and so that each $I_i$ contains at least $Z$ points in $\log_2(A)$. Each $I_i$ contains a subinterval $\widetilde I_i$ of length
$N$ which contains at least $Z/6$ points. This means that the hypothesis of Theorem \ref{Zthm} is satisfied, and we get
$\cG(F_{p,q}^{-1/q'};A)\gc c(p,q) N^{1/q'}(Z/6)^{1/q}$. Part b) of Theorem \ref{GXAbds} follows since $\#\log_2(A) \approx N$. Part a) follows by duality.
\end{proof}

\subsection*{Proof of Theorem \ref{Zthm}}

%\begin{theorem} Let   $N\ge 2$,
%$A\subset \{2^n: n\ge 1\}$ and assume $N/2\le \log_2(\card A)\le  2N$.
%Then, for $q\le p<\infty$,
%$$\cG(F^{-1/q'}_{p,q};A)  \ge c(p,q) N^{1-1/q} (\cD_N(A))^{1/q}\,.$$\end{theorem}

Let $b_\ka$ be the largest integer in $I_\ka$ and
%Let $a_M$ be the smallest integer in $I_M$ and $b_M$ be the largest
%(so that $b_m-a_m$ is either $N$, $N-1$ or $N-2$).We order the intervals, so that $b_M<a_{M+1}$ for $M=1,\dots, 4^N$.
%begin{align}
\Be\fL=\{b_\ka+N: \ka=1,\dots, 4^N\}\,,\Ee
% $ l\in N\bbZ,\,\, 2N\le l\le 2^{N}N+2N \}\,,\fS&=\{(l,\nu): l\in \fL,\quad \nu\in 2^{2N+2}\bbZ, \,\,0< 2^{-l}\nu< 1\}\,.
\begin{subequations}
\begin{align}
\label{fKM}\fA(\ka)&= \{j\in I_\ka: 2^j\in A\}\,, \\
\label {EM}
\cE(\ka)&=\{(j,\mu): j\in \fA(\ka), \,\,\mu\in 2^{j-b_\ka+N+2}\bbZ, 1\le
\mu<2^j\}\,,
\\
\label {Efull}
 \cE&= \bigcup_{\ka=1}^{4^N} \cE(\ka)\,.
\end{align}
\end{subequations}
Let further $\eta$ be as in \eqref{etanondeg} and
%, for $\rho\in \bbN$, $0<2^{2N+2-l}\rho<1$
%\begin{subequations}\label{Hldef}
%\begin{align}\label{Hlrho}
%H_{l,\rho}(x)&=\sum_{1\le \sigma\le N} 2^{-\sigma} \eta(2^{l-\sigma}(x- 2^{2N+2-l}\rho))
%\\
\Be \label{Hl}
H_l(x)=
\sum_{1\le \sigma\le N} 2^{-\sigma}
 \sum_{\substack{\rho\in \bbN:\\ 0<2^{2N+2-l}\rho<1}}
\eta(2^{l-\sigma}(x- 2^{2N+2-l}\rho))\,.
\Ee
%\end{align}
%\eta(2^{l-\sigma}(x- 2^{2N+2-l}\rho))\end{subequations}
Define, for $t\in [0,1]$
\Be \label{gt}f_t(x)= \sum_{l\in \fL} r_l(t) 2^{l/q'} H_l(x)\,.\Ee

\begin{lemma}\label{gtuplemma}  We have
$$\|f_t\|_{F^{-1/q'}_{p,q}} \le C(p,q) N^{1/q}\,.
$$
uniformly in $t$.
\end{lemma}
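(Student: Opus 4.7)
The plan is to recognize $f_t$ as a test function of exactly the type handled by Proposition \ref{testfctprop}(ii). First I would swap the order of summation in the definition of $f_t$ and substitute $l' = l-\sigma$, so that the bump $\eta(2^{l-\sigma}(x-2^{2N+2-l}\rho))$ appears at scale $l'$ and centered on the $2^{m_\sigma - l'}$-separated grid $\{2^{m_\sigma-l'}\rho\}$, where $m_\sigma := 2N+2-\sigma$. Collecting the $\sigma$-dependent factors via
\begin{equation*}
2^{-\sigma}\cdot 2^{(l'+\sigma)/q'} \;=\; 2^{l'/q'}\cdot 2^{-\sigma/q} \;=\; 2^{-l's}\cdot 2^{-\sigma/q},\qquad s=-1/q',
\end{equation*}
one gets
\begin{equation*}
f_t \;=\; \sum_{\sigma=1}^N \beta_{m_\sigma}\,g_{m_\sigma},\qquad \beta_{m_\sigma}:=2^{-\sigma/q},
\end{equation*}
where $g_{m_\sigma}$ is of the form prescribed in Proposition \ref{testfctprop} with $\fL^{m_\sigma} := \fL - \sigma$ and coefficients $a_{l',\nu}(t) := r_{l'+\sigma}(t) \in \{\pm 1\}$.

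Next I would verify the two hypotheses of Proposition \ref{testfctprop}(ii). Pairwise disjointness of the families $\{\fL^{m_\sigma}\}_{\sigma=1}^N$ is a direct consequence of the $N$-separation $|b_\ka - b_{\ka'}|\ge N$ forced by the disjointness of the length-$N$ intervals $I_\ka$: an equality $b_\ka + N - \sigma = b_{\ka'}+N-\sigma'$ with $\sigma\ne\sigma'\in\{1,\dots,N\}$ would force $|b_\ka-b_{\ka'}|=|\sigma-\sigma'|<N$, a contradiction. The cardinality condition $\#\fL^{m_\sigma} = 4^N \ge 2^{m_\sigma}$ holds for all $\sigma\ge 2$; the borderline value $\sigma=1$ (where $\#\fL^{m_1}=2^{m_1-1}$) gives at worst a factor-of-two loss, and the corresponding summand $\beta_{m_1}g_{m_1}$ is handled separately using Proposition \ref{testfctprop}(i) together with a direct dyadic-$BMO$ bound on $\|(\sum_{l'}|\sum_\nu\bbone_{l',\nu}|^q)^{1/q}\|_p$, producing only an $O(1)$ contribution.

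Plugging the remaining terms into Proposition \ref{testfctprop}(ii) yields
\begin{equation*}
\|f_t\|_{F^{-1/q'}_{p,q}} \;\le\; C_{p,q}\Big(\sum_{\sigma=1}^N 2^{-\sigma}\cdot 2^{-(2N+2-\sigma)}\cdot 4^N\Big)^{1/q} \;=\; C_{p,q}\,(N/4)^{1/q} \;\lesssim\; N^{1/q},
\end{equation*}
uniformly in $t$, since the exponents telescope (the sum of $-\sigma$, $\sigma-(2N+2)$ and $2N$ equals $-2$), so each of the $N$ scales contributes the same constant $2^{-2}$ to the sum. The main obstacle is essentially bookkeeping: one has to recognize that the triply-indexed family $\{\eta(2^{l-\sigma}(x-2^{2N+2-l}\rho))\}_{l,\sigma,\rho}$ splits naturally along the ``inner'' index $\sigma$ into $N$ rescaled copies of the atomic construction in Proposition \ref{testfctprop}, with separation parameter $m_\sigma = 2N+2-\sigma$; the crucial geometric input making these copies disjoint in scale is precisely the $N$-separation built into the choice of intervals $I_\ka$.
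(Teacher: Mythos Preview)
Your argument is correct and follows the same strategy as the paper: rewrite $f_t=\sum_{\sigma=1}^N 2^{-\sigma/q}g_{\sigma,t}$ with $g_{\sigma,t}$ built from the shifted scale sets $\fL-\sigma$, verify the disjointness of these sets via the $N$-separation of the $b_\ka$, and invoke Proposition~\ref{testfctprop}(ii). The only difference is cosmetic: the paper takes the separation parameter $m=2N-\sigma$ rather than your tight value $m_\sigma=2N+2-\sigma$, which automatically gives $\#\fL(\sigma)=4^N\ge 2^{2N-\sigma}$ for every $\sigma\ge 1$ and so avoids your separate treatment of $\sigma=1$.
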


\begin{proof}
For $\sigma=1,\dots, N$ let  $$\fL(\sigma)=\{b_\ka+N-\sigma: \ka=1,\dots, 4^N\}.$$ Thus
the $\fL(\sigma)$ are disjoint sets, of cardinality $2^{2N}$ each.
Let $\{r_j\}_{j=1}^\infty$ be the system of Rademacher functions and define, for $t\in [0,1]$,
\begin{align*}
g_{\sigma,t}
&=\sum_{\ka=1}^{2^{2N}}
2^{(b_\ka+N-\sigma)/q'}
\sum_{\substack{\rho\in \bbN:\\ 0<2^{2N+2-l}\rho<1}}
  r_{l+\sigma}(t)  \eta(2^{b_\ka+N-\sigma}(x- 2^{N+2-b_\ka}\rho))
\\
&= \sum_{l\in \fL(\sigma)}
2^{l/q'}
\sum_{\substack{\rho\in \bbN:\\ 0<2^{2N+2-l}\rho<1}}
  r_{l+\sigma}(t)  \eta(2^{l}(x- 2^{2N+2-l-\sigma}\rho))
\end{align*}
so that $$f_t=\sum_{\sigma=1}^N 2^{-\sigma/q} g_{\sigma,t}\,.$$
We apply
 Proposition \ref{testfctprop} with the parameter $N$ replaced by $2N$ and
$m=2N-\sigma$. Clearly, the points
$2^{l-\sigma}2^{2N+2-l}\rho$ are then  $2^{m-l}$ separated.
%Let $\fL(\sigma)=\{l-\sigma: l\in \fL\}$, then the sets
%$\fL(\sigma)$, $\sigma=1,\dots, N$ are disjoint.
By
%Proposition \ref{testfctprop} (
inequality \eqref{betamineq} with
$\beta_{2N-\sigma}=2^{-\sigma/q}$,
\[
\|f_t\|_{F^{-1/q'}_{p,q}}\lc\Big(\sum_{\sigma=1}^N  (2^{-\sigma/q})^q
2^{\sigma-2N}\# (\fL(\sigma))\Big)^{1/q} \lc N^{1/q}\,.\qedhere
\]\end{proof}
\noindent Define for $t\in [0,1]$
$$\cT_tf(x)= \sum_{(j,\mu)\in \cE} r_j(t)  {2^j}\inn {f}{h_{j,\mu} }h_{j,\mu}(x).
$$
\begin{proposition} \label{Ttlowprop}
Let $q<p<\infty$. Then
there is $c(p,q) >0$ such that for large $N$
\Be \label{Ttlow}
\Big(\int_0^1\int_0^1 \big\|\cT_{t_1} f_{t_2}  \big\|^q_{F^{-1/q'}_{p,q}} dt_2 dt_1\Big)^{1/q}
\ge c(p,q) N Z^{1/q}\,.
\Ee
\end{proposition}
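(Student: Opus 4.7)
The plan is to adapt the doubly-averaged strategy of Proposition \ref{qlowerbd} to the present setting. Since each Haar function $h_{j,\mu}$ with $(j,\mu)\in\cE$ is supported in $[0,1]$, so is $\cT_{t_1}f_{t_2}$, and therefore $\psi_k*\cT_{t_1}f_{t_2}$ is supported in a bounded set uniformly in $k,t_1,t_2$. By H\"older's inequality (using $p\ge q$) one may replace the $L_p$ norm in the local-means characterization \eqref{localmeans} of $F^{-1/q'}_{p,q}$ by the $L_q$ norm at the cost of a constant $c(p,q)$. Fubini then gives
\[
\int_0^1\!\!\int_0^1\|\cT_{t_1}f_{t_2}\|^q_{F^{-1/q'}_{p,q}}\,dt_1\,dt_2\gtrsim_{p,q} \sum_k 2^{-kq/q'}\!\int\!\!\int\!\!\int|\psi_k*\cT_{t_1}f_{t_2}(x)|^q\,dt_1\,dt_2\,dx.
\]
Kahane's form of the Khinchine inequality for Hilbert-valued Rademacher sums (applied in $t_1$ pointwise in $t_2$, then in $t_2$ with $\ell^2$-valued coefficients) yields, for each fixed $x$,
\[
\int_0^1\!\!\int_0^1|\psi_k*\cT_{t_1}f_{t_2}(x)|^q\,dt_1\,dt_2\gtrsim_q\Bigl(\sum_{j,l}|B_{j,l,k}(x)|^2\Bigr)^{q/2},
\]
where $B_{j,l,k}(x):=\sum_\mu 2^{l/q'}2^j\inn{H_l}{h_{j,\mu}}\psi_k*h_{j,\mu}(x)$.

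Next I would isolate diagonal contributions. Given $k$ with $2^k\in A$, the disjointness of the $I_\ka$ provides a unique index $\ka(k)$ with $k\in \fA(\ka(k))$; keeping only the pair $j=k$, $l=l_{\ka(k)}:=b_{\ka(k)}+N$ in the nonnegative sum gives $(\sum_{j,l}|B_{j,l,k}|^2)^{q/2}\ge |B_{k,l_{\ka(k)},k}(x)|^q$. The crucial computation is that for every $(j,\mu)\in\cE(\ka)$,
\[
\inn{H_{l_\ka}}{h_{j,\mu}}=N\cdot 2^{-l_\ka}\int_0^{1/2}\eta(u)\,du.
\]
Indeed, the bump centers $x_\rho=2^{2N+2-l_\ka}\rho$ in \eqref{Hl} lie on the lattice $2^{N+2-b_\ka}\bbZ$, which coincides exactly with the set of left endpoints $2^{-j}\mu$ of the intervals $I_{j,\mu}$ for $(j,\mu)\in\cE(\ka)$; the midpoints $2^{-j}(\mu+\tfrac12)$ and right endpoints $2^{-j}(\mu+1)$ lie strictly off this lattice because the offsets $2^{-j-1}$, $2^{-j}$ are smaller than and not multiples of $2^{N+2-b_\ka}$ (using $j\ge b_\ka-N+1$). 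For each $\sigma\in\{1,\dots,N\}$ exactly one $\rho$ places the bump at the left endpoint of $h_{j,\mu}$, contributing $2^{-\sigma}\cdot 2^{-l_\ka+\sigma}\int_0^{1/2}\eta=2^{-l_\ka}\int_0^{1/2}\eta$; these share sign and sum to $N\cdot 2^{-l_\ka}\int_0^{1/2}\eta$. Combining with the lower bound $|\psi_k*h_{k,\mu}(x)|\ge c_0$ on $J_{k,\mu}$ from \eqref{lowerboundforconv} and the disjointness of the $J_{k,\mu}$ in $\mu$ produces
\[
|B_{k,l_{\ka(k)},k}(x)|\gtrsim N\cdot 2^{k-l_{\ka(k)}/q}\qquad\text{for }x\in J_{k,\mu^*},\ (k,\mu^*)\in\cE(\ka(k)).
\]

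Finally, I would integrate and sum. There are $\asymp 2^{b_{\ka(k)}-N-2}$ admissible $\mu^*$ and $|J_{k,\mu^*}|\gtrsim 2^{-k}$, so $\int|B_{k,l_{\ka(k)},k}(x)|^q\,dx\gtrsim N^q\cdot 2^{(q-1)k-2N}$. The algebraic identity $-q/q'+(q-1)=0$ removes the $k$-dependence from the weight, giving
\[
\sum_k 2^{-kq/q'}\int|B_{k,l_{\ka(k)},k}(x)|^q\,dx\gtrsim N^q\cdot 2^{-2N}\sum_{\ka=1}^{4^N}\#\fA(\ka)=N^q Z
\]
by the definition \eqref{Zdef} of $Z$. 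Taking $q$-th roots yields \eqref{Ttlow}. The main obstacle is the inner product computation: one must verify the precise lattice matching between the bump centers $\{x_\rho\}$ and the left endpoints of the Haar intervals in $\cE(\ka)$, check that every other jump and bump integrates to zero by the oddness of $\eta$, and confirm that the $\sigma$-contributions reinforce (producing the decisive factor $N$) rather than cancel.
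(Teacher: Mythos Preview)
Your proposal is correct and follows essentially the same route as the paper: reduce from $L_p$ to $L_q$ via H\"older on the bounded support, apply Khinchine in both Rademacher variables, retain only the diagonal term $j=k$, $l=b_\ka+N$, compute $\inn{H_{b_\ka+N}}{h_{k,\mu}}=N\,2^{-(b_\ka+N)}\int_0^{1/2}\eta$, and then sum over $k\in\fA(\ka)$ and $\ka$ to recover $N^qZ$. The only difference is cosmetic: the paper obtains the inner-product identity via a change of variables, while you phrase it as a lattice coincidence (bump centers $=$ left endpoints of the selected Haar intervals, with no bump reaching a midpoint or right endpoint); both arguments hinge on the same size comparison $2^{\sigma-l_\ka-5}\le 2^{-b_\ka-5}<2^{-k-1}$ for $k\le b_\ka$, $\sigma\le N$.
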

\begin{proof}
By \eqref{localmeans} and H\"older's inequality
it  suffices to show
\Be\label{locallowerbdqqlim}
\Big(\int_0^1\int_0^1\Big\|\Big(\sum_\ka\sum_{k\in \fA(\ka))}2^{kq/q'}|\psi_k *\cT_{t_1}f_{t_2}|^q\Big)^{1/q}\Big\|_q^q dt_1 dt_2 \Big)^{1/q}
\gc NZ^{1/q}\,.\Ee
If we interchange variables and apply Khinchine's inequality then
\eqref{locallowerbdqqlim} follows from
\Be\label{sqfctlow}
\begin{split}
&\Big(\sum_\ka\sum_{k\in \fA(\ka)} 2^{kq/q'}
\Big\|\Big(\sum_{j}\sum_{l\in \fL} \\
&\hspace{1cm}\Big|
\sum_{\substack{\mu:\\ (j,\mu) \in \cE(\ka)}}
2^j \inn{2^{l/q'}H_l}{h_{j,\mu}}\psi_k*h_{j,\mu}
\Big|^2\Big)^{1/2} \Big\|_q^q\Big)^{1/q}
\gc NZ^{1/q}\,.
\end{split}
\Ee
We drop all terms with $(j,l)\neq (k,b_\ka+N)$ and  see that the  left hand side
of \eqref{sqfctlow} is bounded below by
\Be \label{needlowrboundfor}
\Big(\sum_\ka\sum_{k\in \fA(\ka)} 2^{kq/q'}
\Big\|\sum_{\substack{\mu:\\ (k,\mu) \in \cE(\ka)}}
2^{(b_\ka+N)/q'}  2^k \inn{H_{b_\ka+N}}{h_{k,\mu}}
\psi_k*h_{k,\mu}
 \Big\|_q^q\Big)^{1/q}\,.
\Ee
Let $J_{k,\mu}$ be as in \eqref{lowerboundforconv}.
With  $$\zeta_{\ka, \sigma,\rho}(x)=
\eta(2^{b_\ka+N-\sigma}(x- 2^{N+2-b_\ka}\rho))$$ we have
$$
  \inn{H_{b_\ka+N}}{h_{k,\mu}}=
\sum_{1\le\sigma\le N} 2^{-\sigma}\sum_{\substack{\rho:\\0<2^{N+2-b_\ka}\rho<1}}
 \inn{\zeta_{\ka, \sigma,\rho}}{h_{k,\mu}}.
$$ Recall that by \eqref{EM} we only consider $\mu$ of the form
$\mu=\mu_n:= 2^{k-b_M+N+2} n$
 for $n\in \bbN$. For those $\mu$,
\begin{align*}
&2^k \inn{\zeta_{\ka, \sigma,\rho}}{h_{k,\mu_n}}
\\
&=2^k \int  \eta(2^{b_\ka+N-\sigma}(x- 2^{N+2-b_\ka}\rho)) h_{0,0}(2^kx-\mu_n) dx
\\
&=\int \eta( 2^{b_\ka+N-\sigma-k}u + 2^{b_\ka+N-\sigma-k}\mu_n -2^{2N+2-\sigma}\rho) h_{0,0} (u) du
\\
&=\int \eta( 2^{b_\ka+N-\sigma-k}u + 2^{2N+2-\sigma}(n-\rho)) h_{0,0} (u) du\,.
\end{align*}
For $k\in \fA(\ka)$ we have $2^{b_\ka+N-\sigma-k} \le \frac 14 2^{2N-\sigma+2}$ and
since $\eta$ is supported in $(2^{-5},2^5)$ we see that
%for $\mu=\mu(r):= 2^{k-b_M+N+2} r$
$$
2^{k-\sigma} \inn{\zeta_{\ka, \sigma,\rho}}{h_{k,\mu_n}}
= \begin{cases}  2^{k-b_\ka-N} \int_0^{1/2} \eta(u) du  \text{ if } n=\rho\,,
\\0 \text{ if } n\neq \rho\,.\end{cases}
$$
Hence,
%$\mu= 2^{k-b_M+N+2} r$
\Be
2^k \inn{H_{b_\ka+N}}{h_{k,\mu_n}} = N  2^{k-b_\ka-N} \int_0^{1/2} \eta(u) du\,.
\Ee
The intervals $J_{k,\mu_n}$
% with $0<\mu(r)= 2^{k-b_M+N+2} r<2^k$
are disjoint.
Hence the expression \eqref{needlowrboundfor} is bounded below by
%has a lower bound
\begin{multline*}c \Big(\sum_\ka\sum_{k\in \fA(\ka)} 2^{kq/q'}\,\,\times\\
\sum_{\substack{n:\\0< 2^{k-b_\ka+N+2} n<2^k}} \int_{J_{k,\mu_n}} \Big|
2^{(b_\ka+N)/q'} N  2^{k-b_\ka-N} \int_0^{1/2} \eta(u) du\Big|^q
dx\Big)^{1/q}\,.
\end{multline*}
The measure of
$\cup_{n:0< 2^{k-b_\ka+N+2} n<2^k} J_{k,\mu_n}$ is $\approx 2^{b_\ka-N-k}$.
Hence, the last expression is bounded below by
\begin{align*}&c' \Big(\sum_\ka\sum_{k\in \fA(\ka)} 2^{kq/q'}2^{b_\ka-N-k} [
2^{(b_\ka+N)/q'} N  2^{k-b_\ka-N}]^q\Big)^{1/q}
\\
&\gc \Big(\sum_\ka\sum_{k\in \fA(\ka)} 2^{-2N}N^q\Big)^{1/q} \gc NZ^{1/q}\,.
\end{align*}
%here we have used that by assumption $\card(\fK(M))\ge Z$.
This proves \eqref{sqfctlow}
and completes the proof of the proposition.
\end{proof}

\begin{proof}[Proof of Theorem \ref{Zthm}, conclusion.]
 By Lemma
\ref{gtuplemma} and Proposition \ref{Ttlowprop} there exist $t_1$, $t_2$
in  $[0,1]$ such that
$\|f_t\|_{F^{-1/q'}_{p,q}}\lc N^{1/q}$ and
$\|T_{t_1} f_{t_2} \|_{F^{-1/q'}_{p,q}}\gc N Z^{1/q}.$
Hence
$$\|T_{t_1}  \|_{F^{-1/q'}_{p,q} \to F^{-1/q'}_{p,q}}\ge c_{p,q}
N^{1-1/q}Z^{1/q}. $$ As in the previous section, if
$E^{\pm}$ is  as in \eqref{Epm}. Then
$$\max_\pm \| P_{E^\pm}\|_{F^{-1/q'}_{p,q} \to F^{-1/q'}_{p,q}}\ge
\frac{c_{p,q}}{2}N^{1-1/q}Z^{1/q}.$$ Thus $\cG(F^{-1/q'}_{p,q} , A)
\gc N^{1/q'}Z^{1/q}$ .
\end{proof}

%\subsection*  {\it Behavior of  $\gamma^*(F^{-1/q'}_{p,q};\La)$.} Given $\La$ large, let $N\in \bbN$ such that $N4^N\le \La\le (N+1)4^{N+1}$ then $\log \La \approx N$. Now choose  $A= \{2^k: k=1,2,\dots, N 4^N\}$. Then the ssumptions of Theorem \ref{Zthm} apply with $I_\ka=[ N\ka, N(\ka+1))$ and $Z\approx N$. Hence
%$\cG(F^{-1/q'}_{p,q},A)\ge N\approx \log \Lambda$. By  Theorem \ref{upperlabounds} we also have  $\cG(F^{-1/q'}_{p,q},A)\lc \log \Lambda$. Thus $$\gamma^*(F^{-1/q'}_{p,q};\La)\approx \log \La$$ holds for large $\La$.

\section{Concluding remarks}\label{remarksect}
%\subsection*{\it Remarks}
\subsection{\it }
 %{\em (i)}
It is possible to disprove the
unconditional basis property also in the case $q/(q+1)\leq p\leq 1$ and $s\geq
1/q$
via complex interpolation, see Figure \ref{fig2} above. Indeed, if $E$ is
a subset of the Haar system the (quasi-)norm of the corresponding projection
operator $P_E$ interpolates as follows
\Be\label{Interp}
    \|P_E\|_{F^s_{p,q} \to F^{s}_{p,q}} \lc \|P_E\|^{1-\theta}_{F^{s_0}_{p_0,q}
\to F^{s_0}_{p_0,q}}\cdot \|P_E\|^{\theta}_{F^{s_1}_{p_1,q}
\to F^{s_1}_{p_1,q}}
\Ee
with $1/p = (1-\theta)/p_0 + \theta/p_1$, $s = (1-\theta)s_0 + \theta s_1$\,.
Since $P_E:F^{s_0}_{p_0,q} \to F^{s_0}_{p_0,q}$ is $O(1)$ we obtain the relation
$$
   \|P_E\|_{F^{s_1}_{p_1,q} \to F^{s_1}_{p_1,q}} \gc
\|P_E\|^{1/\theta}_{F^s_{p,q} \to F^{s}_{p,q}}\,.
$$
Choosing $(1/p,s)$ as in Figure \ref{fig2} below we obtain large quasi-norms in the
given quasi-Banach region. For the endpoint case the argument has to be
modified by interpolating along the $s=1/q$ line. Putting in \eqref{Interp}
the upper bounds from Theorem \ref{upperlabounds} and the lower bounds
from Theorem \ref{Zthm} we obtain ``large'' projections norms. These
observations show that the shaded region displayed in Figure \ref{fig2} is the
correct one for Hardy-Sobolev spaces $F_{p,2}^s$ on the real line, see also
\cite{triebel-bases}, \S 2.2.3, Page 82.
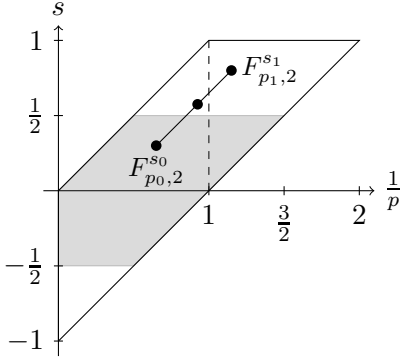
\begin{figure}[h]
 \begin{center}
\begin{tikzpicture}[scale=2]
% Axes
\draw[->] (-0.1,0.0) -- (2.1,0.0) node[right] {$\frac{1}{p}$};
\draw[->] (0.0,-1.1) -- (0.0,1.1) node[above] {$s$};

% Ticks
\draw (1.0,0.03) -- (1.0,-0.03) node [below] {$1$};
\draw (2.0,0.03) -- (2.0,-0.03) node [below] {$2$};
\draw (1.5,0.03) -- (1.5,-0.03) node [below] {$\frac{3}{2}$};
\draw (0.03,1.0) -- (-0.03,1.00) node [left] {$1$};
\draw (0.03,.5) -- (-0.03,.5) node [left] {$\frac{1}{2}$};
\draw (0.03,-.5) -- (-0.03,-.5) node [left] {$-\frac{1}{2}$};
\draw (0.03,-1.0) -- (-0.03,-1.00) node [left] {$-1$};

% Plot
\draw[dashed] (1.0,0.0) -- (1.0,1.0);
\draw[fill=black!70, opacity=0.2] (0.0,-.5) -- (0.0,0.0) -- (.5,.5) -- (1.5,0.5)
-- (1.0,0.0) -- (.5,-.5) -- (0.0,-.5);
\draw (0.0,-1.0) -- (0.0,0.0) -- (1.0,1.0) -- (2.0,1.0) -- (1.0,0.0) --
(0.0,-1.0);
%\draw (0.85,0.65) node {};
\draw (0.65,0.3) -- (1.15,0.8);
\fill (1.15,0.8) circle[radius=1pt] node [right] {\small
$F^{s_1}_{p_1,2}$} ;
\fill (0.65,0.3) circle[radius=1pt] node [below] {\small
$F^{s_0}_{p_0,2}$} ;
\fill (0.925,0.575) circle[radius=1pt];
\draw (0.15,-0.65) node {};
\end{tikzpicture}
\caption{The Haar basis in Hardy-Sobolev spaces, complex
interpolation}\label{fig2}
\end{center}
\end{figure}

\subsection{\it }
%\noindent {\em (ii)}
Concerning the
% What concerns
spaces $F^s_{p,q}(\bbR^n)$ we expect a
similar picture as in Figure \ref{fig1}. However, for the quasi-Banach situation
there will be a $n$-dependence, see \cite{triebel-bases}, \S 2.3.2, Page 94.
%\smallskip
\subsection{\it }
%\noindent {\em (iii)}
The corresponding problem for the Faber basis, i.e., the
family of hat functions that are integrals of associated Haar functions (\cf.
\cite[Chapt.\ 3]{triebel-bases}), can be derived from
%straightforward modification of
the results in this paper. Due to the shift of
regularity of the basis functions there will be corresponding shifts in the
parameter domain (shaded region), cf.  Figures \ref{fig1}, \ref{fig2} together
with \cite{triebel-bases}, \S 3.1.2, Page 127.

%\\ \smallskip

%\noindent {\em (iv)}
\subsection{\it }The proofs in this paper of the existence of
 projection operators with large norm are probabilistic. It is also
 possible to explicitly construct subsets of the Haar system for
which the corresponding projections have large operator  norms. This is
done in
the subsequent paper \cite{sudet}.

%\smallskip

%\noindent {\em (v)}
\subsection{\it } It will be shown in a forthcoming paper \cite{gsu} that
there are
suitable enumerations of $\sH$ which  form  a Schauder basis of $L_{p}^s$, for
$-1/p'<s<1/p$.
This result has also   extensions to $F^s_{p,q}$ spaces.

\newpage

\begin{thebibliography}{10}

\bibitem{albiac-kalton} F. Albiac, N. Kalton. \emph {Topics in Banach space
theory.} Graduate Texts in Mathematics, 233. Springer, New York, 2006.

\bibitem{BCP}
A. Benedek, A.-P. Calder\'on and R. Panzone.
Convolution operators on
Banach space valued functions.
Proc. Nat. Acad. Sci. U.S.A., 48 (1962), 356--365.

\bibitem{bourdaud} G. Bourdaud. Ondelettes et espaces de Besov.
Rev. Mat. Iberoamericana 11 (1995), no. 3, 477--512.

%\bibitem{BuPaTa96} H.-Q. Bui, M. Paluszy\'nski and M. H. Taibleson. A maximal function characterization of weighted Besov-Lipschitz and Triebel-Lizorkin spaces. Stud. Math., 119 (1996), no. 3, 219--246.

%\bibitem{BuPaTa97} \bysame,
%%H.-Q. Bui, M. Paluszy\'nski and M. H. Taibleson.
%Characterization of the Besov-Lipschitz and Triebel-Lizorkin spaces, the case $q<1$. J. Four. Anal. and Appl. (special issue), 3 (1997), 837--846.

\bibitem{cs-lms} M. Christ and A. Seeger.  Necessary conditions for
vector-valued operator inequalities in harmonic analysis. Proc. London Math.
Soc. (3) 93 (2006), no. 2, 447--473.

\bibitem{feffstein} C. Fefferman and E. M. Stein. Some maximal inequalities.
 Amer. J. Math. 93 (1971), 107--115.

\bibitem{feffstein-Hp} \bysame.
% C. Fefferman and E. M. Stein,
$H\sp{p}$ spaces of several variables.  Acta Math.  129
(1972), 137--193.

\bibitem{fr-ja} M. Frazier and B. Jawerth. Decompositions of Besov spaces.
Indiana Univ. Math. J. 34 (1985), no. 4, 777--799.

\bibitem{FrJa90} \bysame. A discrete transform and
decompositions of distribution
spaces. Journ. of Funct. Anal., 93 (1990), 34--170.

\bibitem{gsu} G. Garrig\'os, A. Seeger and T. Ullrich, manuscript.


\bibitem{haar}
A. Haar. Zur Theorie der orthogonalen Funktionensysteme. Math. Ann.  69 (1910),  331--371.

% \bibitem{HiMaOeUl14}
% A.~{H}inrichs, L.~{M}arkhasin, J.~{O}ettershagen, and T.~{U}llrich.
% \newblock {O}ptimal quasi-{M}onte {C}arlo rules on higher order digital nets for
% the numerical integration of multivariate periodic functions.
% \newblock {\em ArXiv e-prints}, 2015.
% \newblock arXiv:1501.01800 [math.NA].


\bibitem{marcinkiewicz} J. Marcinkiewicz. Quelques th\'eoremes sur les
s\'eries  orthogonales. Ann. Sci. Polon. Math. 16 (1937), 84--96.

\bibitem{paley} R.E.A.C. Paley. A remarkable series of orthogonal functions I,
Proc. London Math. Soc. 34 (1932), 241--264.

\bibitem{Pe}
J. Peetre. On spaces of Triebel-Lizorkin type.  Ark. Mat. 13 (1975),
123--130.

\bibitem{ropela} S. Ropela.
Spline bases in Besov spaces.
Bull. Acad. Polon. Sci. S\'er. Sci. Math. Astronom. Phys. 24 (1976), no. 5, 319--325.

\bibitem{Ry99} V. S. Rychkov. On a theorem of Bui, Paluszy\'nski and Taibleson.
Proc. Steklov Inst., 227 (1999), 280--292.


\bibitem{schauder} J. Schauder. Eine Eigenschaft der Haarschen Orthogonalsysteme. Math. Z. 28 (1928), 317-320.

\bibitem{sudet} A. Seeger and T. Ullrich.  Lower bounds for Haar projections:
Deterministic examples. Preprint.
%Bonn/Madison

\bibitem{sickel} W. Sickel.
A remark on orthonormal bases of compactly supported wavelets in
Triebel-Lizorkin spaces. The case $0<p,q<\infty$.
Arch. Math. (Basel) 57 (1991), no. 3, 281--289.


\bibitem{Stein} E.M. Stein, \emph{
Singular integrals and differentiability properties of functions.}
Princeton University Press, Princeton, N.J. 1970.


\bibitem{triebel73} H. Triebel. \"Uber die Existenz von Schauderbasen in
Sobolev-Besov-R\"aumen. Isomorphiebeziehungen.  Studia Math. 46 (1973), 83--100.

\bibitem{triebel78} \bysame.
On Haar bases in Besov spaces.
Serdica 4 (1978), no. 4, 330--343.


%\bibitem{triebel} H. Triebel.  \emph{ Theory of function spaces.} Monographs in Mathematics, 78. Birkh\"auser Verlag, Basel, 1983.

\bibitem{triebel2} \bysame. \emph{ Theory of function spaces II.} Monographs
in Mathematics, 84. Birkh\"auser Verlag, Basel, 1992.



\bibitem{triebel-bases} \bysame. \emph{Bases in function spaces, sampling,
discrepancy, numerical integration.} EMS Tracts in Mathematics, 11. European
Mathematical Society (EMS), Z\"urich, 2010.

\bibitem{triebelproblem} \bysame. The role of smoothness $1/2$ for Sobolev spaces. Problem session at the workshop
``Discrepancy, Numerical Integration, and Hyperbolic Cross Approximation'',
organized by V. N. Temlyakov, T. Ullrich, Sept. 2013, HCM Bonn.

%\bibitem{Ul12} T. Ullrich. Continuous characterizations of Besov-Lizorkin-Triebel spaces and new interpretations as coorbits. Journ. Funct. Spaces Appl., vol. 2012, article ID 163213, doi:10.1155/2012/163213.


\end{thebibliography}
\end{document}